\theoremstyle{definition} \newtheorem{theorem}{Theorem}[section]
\theoremstyle{definition} \newtheorem{definition}[theorem]{Definition}
\theoremstyle{definition} \newtheorem{lemma}[theorem]{Lemma}
\theoremstyle{definition} \newtheorem{proposition}[theorem]{Proposition}
\theoremstyle{definition} \newtheorem{corollary}[theorem]{Corollary}
\theoremstyle{remark} 
\theoremstyle{remark} \newtheorem{remark}[theorem]{Remark}
\theoremstyle{remark} \newtheorem{example}[theorem]{Example}
\theoremstyle{remark}
\newcommand{\Zpr}{\mathbb{Z}/p^r}
\newcommand{\Zps}{\mathbb{Z}/p^s}
\newcommand{\Zpt}{\mathbb{Z}/p^t}
\newcommand{\Zp}{\mathbb{Z}/p}
\newcommand{\baseInt}{\ell}
\newcommand{\PhiH}{\Phi_H}
\newcommand{\PhiPi}{\Phi_\pi}
\newcommand{\quot}{\theta}
\newcommand{\ev}{\textrm{ev}}
\newcommand{\splitting}{\tau}
\newcommand{\JTens}{\varphi}
\newcommand{\wt}{\textrm{wt}}
\title{$\mathbb{Z}/p^r$-hyperbolicity via homology}
\author{Guy Boyde}
\address{Mathematical Sciences, University of Southampton, Southampton SO17 1BJ, United Kingdom}
\email{gb7g14@soton.ac.uk}
\subjclass[2020]{Primary 55Q05; Secondary 55Q15, 55P40}
\keywords{Local hyperbolicity, Moore spaces}
\begin{document}

\maketitle

\begin{abstract}
We show that the homotopy groups of a Moore space $P^n(p^r)$, where $p^r \neq 2$, are $\mathbb{Z}/p^s$-hyperbolic for $s \leq r$. Combined with work of Huang-Wu, Neisendorfer, and Theriault, this completely resolves the question of when such a Moore space is $\mathbb{Z}/p^s$-hyperbolic for $p \geq 5$, or when $p=2$ and $r \geq 6$. We also give a criterion in ordinary homology for a space to be $\Zpr$-hyperbolic, and deduce some examples.
\end{abstract}

\section{Introduction}

Given a space $X$, one can ask about the behaviour of the partial sum of homotopy groups $$\bigoplus_{i=1}^m \pi_i(X) \textrm{ as } m \rightarrow \infty.$$ Rationally, deep results have been obtained, notably the famous dichotomy of F\'elix, Halperin and Thomas \cite[Chapter 33]{FHT}. Interpreted integrally, this dichotomy says that if $X$ is a simply connected finite $CW$-complex with finite rational category then either \begin{itemize}
    \item the rank of $\bigoplus_{i=1}^\infty \pi_i(X)$ is finite, and $X$ is called \emph{rationally elliptic}, or
    \item the rank of $\bigoplus_{i=1}^m \pi_i(X)$ grows exponentially with $m$, and $X$ is called \emph{rationally hyperbolic}.
\end{itemize} Study of the corresponding behaviour for the torsion parts of these groups, which is the subject of this paper, was initiated by Huang and Wu in \cite{HuangWu}.

Let $M$ be a $\mathbb{Z}$-module, let $p$ be a prime and let $t \in \mathbb{N}$. The \emph{$\Zpt$-dimension} or \emph{$\Zpt$-rank} of $M$, denoted $\dim_{\Zpt}(M)$, is the greatest $d \in \mathbb{N} \cup \{0, \infty \}$ such that there is an isomorphism $M \cong (\Zpt)^d \oplus C$ for some complementary module $C$. Said another way, $\dim_{\Zpt}(M)$ is the number of $\Zpt$-summands in $M$.

\begin{definition} Let $M$ be a graded $\mathbb{Z}$-module, Let $p$ be a prime, and let $S \subset \mathbb{N}$. We say that $X$ is $p$-\textit{hyperbolic concentrated in (the set of exponents)} $S$ if $$a_m := \sum_{t \in S} \dim_{\Zpt}(\bigoplus_{i=1}^m M_i)$$ grows exponentially, in the sense that $$\liminf_m\frac{\ln(a_m)}{m} > 0.$$ For a space $X$ we will say that $X$ is $p$-\textit{hyperbolic concentrated in} $S$ if $\pi_*(X)$ is $p$-hyperbolic concentrated in $S$. If $X$ is $p$-hyperbolic concentrated in $\mathbb{N}$ then we will say simply that $X$ is $p$-hyperbolic. \label{defdef}
\end{definition}

This definition generalises and interpolates between two definitions due to Huang and Wu \cite{HuangWu}. Namely, their \textit{$\Zps$-hyperbolicity} is precisely our $p$-hyperbolicity concentrated in the singleton set $\{s\}$, and their \textit{$p$-hyperbolicity} is precisely our $p$-hyperbolicity concentrated in $\mathbb{N}$, as defined above.

\begin{definition} \label{MooreSpace} Let $P^n(\ell)$ denote the \textit{mod-$\ell$ Moore space}, which we take to be the cofibre $$S^{n-1} \xrightarrow{\ell} S^{n-1} \longrightarrow P^n(\ell)$$ of the degree $\ell$ map. \end{definition}

Huang and Wu show that for  $p$ prime, $n \geq 3$, and $r \geq 1$ the Moore space $P^n(p^r)$ is $\Zpr$ and $\mathbb{Z}/p^{r+1}$-hyperbolic, and additionally that $P^n(2)$ is $\mathbb{Z}/8$-hyperbolic. In \cite{ZhuPan}, Zhu and Pan show that $P^n(p^r)$ is also $\Zp$-hyperbolic. Our first main result fills in the gap between these exponents:

\begin{theorem} \label{MooreHyp} Let $p$ be a prime, and $r \in \mathbb{N}$ with $p^r \neq 2$. If $n \geq 3$, then $P^n(p^r)$ is $\Zps$-hyperbolic for all $s \leq r$. \end{theorem}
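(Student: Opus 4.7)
The plan is to apply the homology criterion for $\Zps$-hyperbolicity promised in the abstract (to be established earlier in the paper) after computing enough of the loop space homology of $P^n(p^r)$. The key input is the James splitting
\[\Sigma \Omega P^n(p^r) \simeq \bigvee_{k \geq 1} \Sigma \bigl(P^{n-1}(p^r)\bigr)^{\wedge k},\]
which reduces the question to iterated smash powers of $P^{n-1}(p^r)$. Under the hypothesis $p^r \neq 2$, the classical decomposition $P^a(p^r) \wedge P^b(p^r) \simeq P^{a+b}(p^r) \vee P^{a+b-1}(p^r)$, iterated $k-1$ times, writes $\bigl(P^{n-1}(p^r)\bigr)^{\wedge k}$ as a wedge of $2^{k-1}$ Moore spaces of the form $P^?(p^r)$, concentrated in a range of dimensions of linear size in $k$. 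This exhibits exponentially many independent $\Zpr$-summands in $\bigoplus_{i \leq m} H_i(\Omega P^n(p^r); \mathbb{Z})$ when $m$ grows linearly, which should be exactly the input the criterion requires.

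I would then feed this computation into the criterion, which for each $s \leq r$ should convert the exponentially many $\Zpr$-summands in integral loop space homology into exponentially many $\Zps$-summands in $\pi_*(P^n(p^r))$. The exclusion $p^r \neq 2$ in the statement is exactly the condition under which the smash-product decomposition holds, so the hypothesis of the theorem matches the hypothesis of the tool, and in particular a single verification should handle all $1 \leq s \leq r$ uniformly.

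The main obstacle is the criterion itself. A direct count of $\Zps$-summands in $H_*(\Omega P^n(p^r); \mathbb{Z})$ gives zero for $s < r$, because by the James analysis above that homology has only $\Zpr$-torsion in positive degrees. The criterion must therefore extract lower-exponent homotopical information from higher-exponent homological information, most plausibly using Bockstein operations together with the mod-$p$ Hurewicz image, or exploiting the canonical maps from smaller Moore spaces $P^?(p^s)$ induced by the inclusions $\mathbb{Z}/p^s \hookrightarrow \Zpr$. The substantive work lies in building an algebraic bridge from $\Zpr$-summands in integral homology to $\Zps$-summands in homotopy for every $s \leq r$, and then checking that the exponentially many wedge summands isolated above produce genuinely linearly independent $\Zps$-summands in $\pi_*(P^n(p^r))$ rather than collapsing or overlapping after lifting from homology to homotopy.
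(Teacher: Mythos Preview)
Your proposal has a genuine gap: the homology criterion you are banking on does not deliver what you need. In the paper, the criterion (Theorem~\ref{HPrelim}) takes a map $\mu\colon P^{n+1}(p^r)\to Y$ inducing an injection on $\Zps$-loop-homology and concludes only that $Y$ is $p$-hyperbolic \emph{concentrated in exponents} $s,s+1,\dots,r$. For a single Moore space with $\mu=\mathrm{id}$ this yields $\Zpr$-hyperbolicity when $s=r$, but for $s<r$ it gives only that the sum $\sum_{t=s}^r \dim_{\Zpt}$ grows exponentially, not that $\dim_{\Zps}$ itself does. The paper is explicit about this: applying the criterion to the identity ``recover[s] the $s=r$ case of Theorem~\ref{MooreHyp}'' and nothing more. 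Your James-splitting computation of $H_*(\Omega P^n(p^r);\mathbb{Z})$ is correct as far as it goes, but all of that homology is $\Zpr$-torsion, and no amount of Bockstein manipulation will manufacture a $\Zps$-summand in homotopy from it without an additional, genuinely homotopical input.

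The paper's actual proof supplies exactly that input and proceeds quite differently. It first uses the Cohen--Moore--Neisendorfer decompositions (and Cohen's $p=2$ analogue) to exhibit $\Omega(P^{n_1}(p^r)\vee P^{n_2}(p^r))$ as a retract of $\Omega P^n(p^r)$, so that it suffices to treat a wedge of two Moore spaces. Then Hilton--Milnor gives a genuine product decomposition of the loop space (not merely a stable splitting), and each factor is, after the smash decomposition you cite, a wedge of many Moore spaces. The crucial step you are missing is Corollary~\ref{mooreClasses}: for each $s\le r$ there exists $j$ such that $\pi_{N+j}(P^N(p^r))$ contains a $\Zps$-summand for all large $N$. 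This is obtained by lifting a $\Zps$-summand from $\pi^S_j$ (found via Adams' work on the $J$-homomorphism) through the cofibration defining $P^N(p^r)$, and using Barratt's stable exponent result to control the order. Once one has a single such $\Zps$-summand, the Hilton--Milnor count replicates it exponentially often. This homotopical seed is precisely the ``algebraic bridge'' you flagged as the main obstacle, and it cannot be replaced by a purely homological argument.
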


The key is to show that the stable homotopy of $P^{n}(p^r)$ contains a $\Zps$-summand for each $s \leq r$. This follows from work of Adams on the $J$-homomorphism \cite{AdamsII, AdamsIV}, which allows us to find such summands in the stable homotopy of spheres, and classical work of Barratt \cite{Barratt} allows us to transplant these summands to Moore spaces. Once this is done, the proof follows the same lines as those in \cite{HuangWu} and \cite{ZhuPan}.

For $p>3$ Huang and Wu's results and Theorem \ref{MooreHyp} together are best possible, in the following sense. In \cite{NeisendorferExponents1}, Neisendorfer shows that $\pi_*(P^n(p^r))$ contains no element of order $p^s$ for $s>r+1$. In fact, Neisendorfer claimed in \cite{NeisendorferExponents1} that this result also holds when $p = 3$, but later, with Brayton Gray, discovered some mistakes in the proof (see the unpublished \cite{NeisendorferExponents2}). These mistakes were repaired apart from when $p=3$. In \cite{NeisendorferExponents2}, Neisendorfer shows that the $3$-primary exponent of $P^n(3^r)$ is either $3^{r+1}$ or $3^{r+2}$.

Neisendorfer's result allows us to combine Huang and Wu's result with Theorem \ref{MooreHyp} to obtain the following (using Proposition \ref{CRT}):

\begin{corollary} \label{FunCor} For $p \neq 2, 3$ prime, $s,\baseInt \in \mathbb{N}$ and $n \geq 3$, the following are equivalent:
\begin{enumerate}
    \item $P^n(\baseInt)$ is $\Zps$-hyperbolic.
    \item $\pi_*(P^n(\baseInt))$ contains a class of order $p^s$.
    \item $p^{s-1} | \baseInt$. \qed
\end{enumerate}
\end{corollary}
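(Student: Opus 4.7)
The plan is to reduce to the prime-power case via Proposition \ref{CRT} and combine the hyperbolicity results of Huang-Wu, Zhu-Pan, and our Theorem \ref{MooreHyp} with Neisendorfer's exponent theorem. Writing $\ell = p^r m$ with $\gcd(p, m) = 1$, Proposition \ref{CRT} gives a splitting $P^n(\ell) \simeq P^n(p^r) \vee P^n(m)$; since $P^n(m)$ is $p$-locally contractible, this is in particular a $p$-local equivalence between $P^n(\ell)$ and $P^n(p^r)$.

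The implication $(1) \Rightarrow (2)$ is immediate from the definition of $\Zps$-hyperbolicity. For $(2) \Rightarrow (3)$, the $p$-local equivalence transports any class of order $p^s$ in $\pi_*(P^n(\ell))$ to one in $\pi_*(P^n(p^r))$. Neisendorfer's exponent theorem \cite{NeisendorferExponents1}, valid for $p \geq 5$, states that $\pi_*(P^n(p^r))$ contains no elements of order greater than $p^{r+1}$, forcing $s \leq r + 1$, i.e., $p^{s-1} \mid \ell$.

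For $(3) \Rightarrow (1)$, the hypothesis reads $s \leq r + 1$. Hyperbolicity passes to any space containing $P^n(p^r)$ as a wedge summand, because $\pi_*(X) \oplus \pi_*(Y)$ is a direct summand of $\pi_*(X \vee Y)$, so it suffices to show $P^n(p^r)$ is $\Zps$-hyperbolic. We split into three cases according to $s$: the case $s = 1$ is Zhu-Pan \cite{ZhuPan}; the case $2 \leq s \leq r$ is Theorem \ref{MooreHyp}; and the case $s = r + 1$ is Huang-Wu \cite{HuangWu}.

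The main obstacle is organizational rather than mathematical — everything essential lives in the cited results and in Theorem \ref{MooreHyp}. The one genuine subtlety is the restriction $p \neq 2, 3$, which is exactly the hypothesis needed for Neisendorfer's exponent theorem, invoked in the key step $(2) \Rightarrow (3)$.
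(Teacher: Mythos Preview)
Your argument is correct and is exactly the proof the paper sketches in the paragraph preceding the corollary: reduce to the prime-power summand via Proposition~\ref{CRT}, use Neisendorfer's exponent theorem for $(2)\Rightarrow(3)$, and use Huang--Wu together with Theorem~\ref{MooreHyp} for $(3)\Rightarrow(1)$. Your separate invocation of Zhu--Pan for $s=1$ is harmless but unnecessary, since Theorem~\ref{MooreHyp} already covers all $s\leq r$.
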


Theriault \cite{Theriault2} has shown that for $n \geq 4$ and $r \geq 6$, $\pi_*(P^n(2^r))$ contains no element of order $2^{r+2}$. The result of Corollary \ref{FunCor} therefore holds also when $p=2$ and $\ell$ is divisible by $2^6=64$.

Our second main result is a homological criterion for hyperbolicity:

\begin{theorem} \label{HPrelim} Let $Y$ be a simply connected $CW$-complex, let $p \neq 2$ be prime, and let $s \leq r \in \mathbb{N}$. If there exists a map $$\mu: P^{n+1}(p^r) \longrightarrow Y$$ such that the induced map $$(\Omega \mu)_* : H_*(\Omega P^{n+1}(p^r);\Zps) \longrightarrow H_*(\Omega Y;\Zps)$$ is an injection, then $Y$ is $p$-hyperbolic concentrated in exponents $s, s+1, \dots, r$. In particular if $s=r$ then $Y$ is $\Zpr$-hyperbolic. \end{theorem}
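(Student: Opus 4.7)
The plan is to transfer the $\Zpt$-hyperbolicity of $P^{n+1}(p^r)$ for $s \leq t \leq r$ (Theorem \ref{MooreHyp}) to $Y$, using the hypothesized mod-$p^s$ homology injection on loop spaces as the naturality bridge. The overarching idea is that the map $\mu$ is weak enough that we only control homology, but strong enough that this control propagates, via Hurewicz, to the exponentially many torsion summands in $\pi_*(P^{n+1}(p^r))$.

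I would begin by noting that $H_*(\Omega P^{n+1}(p^r); \Zps)$ has $\mathbb{F}_p$-rank growing exponentially. This follows from the Bott--Samelson/James identification of $H_*(\Omega \Sigma X; \mathbb{F}_p)$ as a free tensor algebra (with $X = P^n(p^r)$, whose two mod-$p$ homology generators in degrees $n-1$ and $n$ force exponential growth), combined with a universal coefficient argument to pass from $\mathbb{F}_p$ to $\Zps$ coefficients. By the injectivity of $(\Omega\mu)_*$, this exponential growth is inherited by $H_*(\Omega Y; \Zps)$.

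The crucial step is to upgrade this homological statement to a homotopical one. For each $t \in \{s,\ldots,r\}$, the $\Zpt$-hyperbolicity of $P^{n+1}(p^r)$ provides exponentially many classes of order $p^t$ in $\pi_*(\Omega P^{n+1}(p^r))$. I would use the Cohen--Moore--Neisendorfer product decomposition of $\Omega P^{n+1}(p^r)$ (available for $p$ odd) to argue that sufficiently many of these classes are detected nontrivially by the mod-$p^s$ Hurewicz map into $H_*(\Omega P^{n+1}(p^r); \Zps)$. Their pushforwards under $\Omega\mu$ then land in nonzero classes of $\pi_*(\Omega Y; \Zps)$, nonvanishing being guaranteed by the injectivity hypothesis and naturality of the Hurewicz map. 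The universal coefficient short exact sequence
$$0 \to \pi_i(\Omega Y) \otimes \Zps \to \pi_i(\Omega Y; \Zps) \to \operatorname{Tor}(\pi_{i-1}(\Omega Y), \Zps) \to 0$$
converts these into $\Zpt$-summands of $\pi_*(\Omega Y) = \pi_{*+1}(Y)$; careful tracking through the Cohen--Moore--Neisendorfer decomposition ensures $t \in \{s,\ldots,r\}$, with the lower bound $t \geq s$ coming from exact-order-$p^s$ detection in mod-$p^s$ homology and the upper bound $t \leq r$ coming from the known orders of the classes we started with in $\pi_*(P^{n+1}(p^r))$.

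The main obstacle is this Hurewicz-lifting and exponent-tracking step: one must produce sufficiently many mod-$p^s$ homotopy classes of $\Omega P^{n+1}(p^r)$ with linearly independent Hurewicz images, and arrange that their pushforwards in $\pi_*(\Omega Y)$ correspond to $\Zpt$-summands with $t$ in the stated range. The restriction $p \neq 2$ is essential here, as it enables the Cohen--Moore--Neisendorfer machinery.
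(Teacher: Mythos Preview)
Your proposal has a genuine gap at the Hurewicz-detection step, and the gap is not merely technical. The torsion classes supplied by Theorem \ref{MooreHyp} are built from stable homotopy of spheres via the $J$-homomorphism and then transplanted into the Moore space factors of the Hilton--Milnor decomposition. Such stable classes have no reason to be seen by the mod-$p^s$ Hurewicz map into $H_*(\Omega P^{n+1}(p^r);\Zps)$: on a sphere (or a Moore space) the Hurewicz map kills everything above the bottom cells. Invoking the Cohen--Moore--Neisendorfer product decomposition does not help here, because that decomposition is exactly what put those classes into Moore space pieces whose higher homotopy is Hurewicz-invisible. Without Hurewicz detection, the injectivity of $(\Omega\mu)_*$ on homology gives you no leverage at all, and your transfer to $Y$ collapses.

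The paper's proof is independent of Theorem \ref{MooreHyp} and takes a different route: it manufactures Hurewicz-detected classes directly. One defines a map $\Phi_\pi^{r,r}:L'(x,dx)\to\pi_*(\Omega P^{n+1}(p^r);\Zpr)$ by iterated Samelson products on the two obvious generators $u,v$; its Hurewicz image lands (via Bott--Samelson) in the free Lie subalgebra $L(x,dx)\subset T(x,dx)\cong H_*(\Omega P^{n+1}(p^r);\Zps)$, and this inclusion is injective. The substantive analytic input is that the boundary submodule $BL(x,dx)$ is $\Zps$-hyperbolic, which follows from the Cohen--Moore--Neisendorfer description of the homology of a free differential Lie algebra. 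One then has a factorization
\[
\textrm{Im}\bigl((\Omega\mu)_*\circ\beta^r\circ\Phi_\pi^{r,r}\bigr)\hookrightarrow \pi_*(\Omega Y)\xrightarrow{\,h\circ\rho^s\,}\textrm{(a $\Zps$-hyperbolic module)},
\]
where the left-hand module is $p^r$-annihilated and the right-hand module is $p^s$-annihilated. A purely algebraic ``sandwich'' lemma then forces $\pi_*(\Omega Y)$ to be $p$-hyperbolic concentrated in exponents $s,\dots,r$. Note that the paper never tracks individual exponents $t$; the range $\{s,\dots,r\}$ falls out of the two annihilation conditions, not from knowing orders of specific classes.
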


We will see (using Proposition \ref{injection2}) that the hypotheses of Theorem \ref{HPrelim} simplify in the case that $Y=\Sigma X$ is a suspension, as follows:

\begin{theorem} \label{HCriterion} Let $X$ be a connected $CW$-complex, let $p \neq 2$ be prime, and let $s \leq r \in \mathbb{N}$. If there exists a map $$\mu: P^{n+1}(p^r) \longrightarrow \Sigma X$$ such that $$\mu_* : \widetilde{H}_*(P^{n+1}(p^r);\Zps) \longrightarrow \widetilde{H}_*(\Sigma X;\Zps)$$ is an injection, then $\Sigma X$ is $p$-hyperbolic concentrated in exponents $s, s+1, \dots, r$. In particular if $s=r$ then $\Sigma X$ is $\Zpr$-hyperbolic. \end{theorem}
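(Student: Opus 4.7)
The plan is short: deduce Theorem \ref{HCriterion} from Theorem \ref{HPrelim} by invoking Proposition \ref{injection2}. Theorem \ref{HPrelim} asks for injectivity of $(\Omega\mu)_*$ on $\Zps$-homology of the loop spaces, while the present hypothesis is the weaker statement that $\mu_*$ itself is injective on $\widetilde{H}_*(-;\Zps)$. Proposition \ref{injection2} is the tool which, exploiting that the target is a suspension, upgrades the latter to the former; once this is in hand, Theorem \ref{HPrelim} delivers the hyperbolicity conclusion immediately.

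To sketch why Proposition \ref{injection2} ought to hold, I would rely on the James splitting. Since $p \neq 2$, the Moore space desuspends as $P^{n+1}(p^r) = \Sigma P^n(p^r)$, so $\mu$ is a map between suspensions. The James splitting supplies stable equivalences $\Sigma \Omega \Sigma Z \simeq \bigvee_{k \geq 1} \Sigma Z^{\wedge k}$, and after adjointing $\mu$ to $\tilde{\mu}: P^n(p^r) \to \Omega \Sigma X$ and using the $H$-space extension, the induced map on stable homology decomposes into smash powers of $\tilde\mu$. On the source, Bott--Samelson identifies $H_*(\Omega P^{n+1}(p^r);\Zps)$ with the tensor algebra on $\widetilde{H}_*(P^n(p^r);\Zps) \cong \Zps \oplus \Zps$, so $(\Omega\mu)_*$ is determined by its restriction to generators, where it agrees (up to the suspension isomorphism) with $\mu_*$. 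Applying Künneth smash-power by smash-power, then assembling the resulting injections across the James splitting, yields injectivity of $(\Omega\mu)_*$.

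The main technical subtlety is that we are working with the non-field coefficient ring $\Zps$, which can spoil Künneth-type arguments. The saving grace is that $\widetilde{H}_*(P^n(p^r);\Zps)$ is free over $\Zps$ of rank two, concentrated in degrees $n-1$ and $n$, precisely because $s \leq r$; this freeness kills the relevant Tor-terms on the source side and allows an injection at the level of generators to propagate to each tensor power, hence to the whole loop-space homology. With Proposition \ref{injection2} so established, Theorem \ref{HPrelim} yields $p$-hyperbolicity of $\Sigma X$ concentrated in the exponents $s, s+1, \dots, r$, and in particular $\Zpr$-hyperbolicity when $s = r$.
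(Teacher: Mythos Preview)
Your reduction is exactly the paper's: invoke Proposition~\ref{injection2} to upgrade injectivity of $\mu_*$ to injectivity of $(\Omega\mu)_*$, then apply Theorem~\ref{HPrelim}. That is the entire proof of Theorem~\ref{HCriterion} in the paper, so your first paragraph is complete and correct.

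Your sketch of \emph{why} Proposition~\ref{injection2} holds, however, glosses over the main difficulty. The adjoint $\tilde\mu$ lands in $\Omega\Sigma X$, not in $X$, so the image of $\overline{\mu}_*$ need not lie inside the tensor-algebra summand $T(\widetilde{H}_*(X;\Zps))$ of $\widetilde{H}_*(\Omega\Sigma X;\Zps)$; it can hit the Tor-type complement $C$ coming from the James splitting. Your ``smash-power by smash-power K\"unneth'' picture is essentially the argument when $\mu$ is itself a suspension, which the paper explicitly flags as the easy case. The paper's actual proof of Proposition~\ref{injection2} works harder: it uses the evaluation map (Lemma~\ref{evproj}) and Lemma~\ref{factorTensor} to show that the projection of $\overline{\mu}_*$ to the weight-one part of the tensor algebra is still injective, then runs a leading-terms argument (Lemma~\ref{leadingTerms}) exploiting $p^{s-1}C=0$ to propagate injectivity up through the tensor filtration. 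Since you are citing Proposition~\ref{injection2} rather than proving it, this does not affect the validity of your proof of Theorem~\ref{HCriterion}, but your sketch would not establish the proposition as stated.
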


Theorem \ref{HCriterion} is substantially more elementary than existing criteria for $\Zpr$-hyperbolicity: the criterion given in \cite{HuangWu} requires knowledge of a homotopy decomposition of $\Omega Y$, while that of \cite{Me} is given in terms of $K$-theory, and only gives $p$-hyperbolicity. Here, by contrast, we only need ordinary homology.

Together, Theorems \ref{MooreHyp} and \ref{HCriterion} may be thought of as doing for Moore spaces what \cite{Me} did for wedges of spheres. The main difference between the homological results of that paper and this is that the Hurewicz map is enough to detect $p^r$-torsion in the homotopy groups of the Moore space $P^n(p^r)$. In contrast, one needs more sophisticated machinery to see $p^r$-torsion in a wedge of spheres; \cite{Me} used Adams' $e$-invariant. This meant that the theorems of that paper had to be stated in terms of $K$-theory, rather than ordinary homology, and that the spaces under consideration had to be finite complexes.

This document is organized as follows. The proof of Theorem \ref{MooreHyp} may be read independently of the proof of Theorems \ref{HPrelim} and \ref{HCriterion}, and vice versa. Section \ref{ApplicationSection} contains applications of our results. Section \ref{CommonSection} contains definitions needed throughout. Sections \ref{MooreDecompSection}, \ref{CurlySection} and \ref{Proof1Section} prove Theorem \ref{MooreHyp}, while Sections \ref{ModuleSection}, \ref{LieSection}, \ref{FoundationSection} and \ref{proof2Section} prove Theorem \ref{HPrelim}, and Section \ref{SuspensionSection} shows that Theorem \ref{HPrelim} implies Theorem \ref{HCriterion}.

\medskip

I would like to thank my PhD supervisor, Stephen Theriault, for many helpful conversations and much encouragement. Changes to the proof of Theorem \ref{MooreHyp} which make the result work for powers of $2$ are due to him.

\section{Applications} \label{ApplicationSection}

\subsection{Spaces containing a Moore space as a retract}

Various spaces have been shown to contain wedges of Moore spaces and spheres as $p$-local retracts after looping. This section collects some examples of this form.

\begin{example} Let $M$ be an (oriented) $(n-1)$-connected $(2n+1)$-manifold for $n \geq 2$. By Poincar\'e duality, the homology of $M$ is determined entirely by $$H_{n}(M) \cong \mathbb{Z}^r \oplus \bigoplus_{i=1}^\ell \mathbb{Z}/p_i^{r_i}.$$ When $r \geq 1$, Basu \cite[Theorem 5.4]{Basu} gives a decomposition of $\Omega M$, which shows in particular that $\Omega M$ contains a retract $\Omega (\bigvee_{r-1}S^n \vee \bigvee_{r-1} S^{n+1} \vee \bigvee_{i=1}^\ell P^{n}(p_i^{r_i}))$. By Theorem \ref{MooreHyp} and the work of Huang-Wu \cite{HuangWu} and Zhu-Pan \cite{ZhuPan}, it follows that $M$ is $\Zps$-hyperbolic whenever $p^{s-1}$ divides the order of the torsion part of $H_n(M)$. In fact, if $r \geq 2$ then $\Omega M$ contains $\Omega (S^n \vee S^m)$ as a retract, so is $\Zps$-hyperbolic for all $p$ and $s$ by \cite{Me}. Conversely, if $M$ is not $\Zps$ hyperbolic for any $p$ and $s$ (and is not the sphere $S^{2n+1}$) then we must have $H_{n}(M) \cong \mathbb{Z}$. An example of such a manifold is $S^{n-1} \times S^{n}$, whose homotopy groups satisfy $\pi_i(S^{n-1} \times S^{n}) \cong \pi_i(S^{n-1}) \times \pi_i(S^{n})$. Determining hyperbolicity for these examples is therefore as difficult as determining hyperbolicity of $S^n$. \end{example}

In order to use Basu's result, we require that there be a $\mathbb{Z}$-summand in $H_n(M)$. In contrast, our next example has $H_n(M)$ a torsion group.

\begin{example} Let $p$ be an odd prime, let $r \in \mathbb{N}$, and let $M$ be a $5$-dimensional spin manifold with $H_2(M; \mathbb{Z})$ isomorphic to a direct sum of copies of $\Zpr$. In \cite{Theriault} Theriault notes that his Theorem 1.3, together with a classification of simply connected 5-dimensional Poincar\'e duality complexes by St\"ocker \cite{Stocker}, gives a decomposition of $\Omega M$. This decomposition shows that $\Omega M$ contains $\Omega P^{3}(p^r)$ as a retract. In particular, by Theorem \ref{MooreHyp}, $M$ is $\Zps$-hyperbolic for all $1 \leq s \leq r$. \end{example}

\subsection{Suspensions}

This section deduces some examples of Theorem \ref{HCriterion}. As a first example, note that the identity map on the Moore space $P^n(p^r)$ satisfies the hypotheses of that theorem, and so we recover the $s=r$ case of Theorem \ref{MooreHyp} using purely homological methods.

Let $h: \pi_n(Y) \longrightarrow H_n(Y ; \mathbb{Z})$ be the Hurewicz map, which sends a homotopy class $f: S^n \longrightarrow Y$ to the image $f_*(\xi_n)$ of a generator $\xi_n$ of $H_n(S^n ; \mathbb{Z})$ under the map induced on homology by $f$.

\begin{corollary}[of Theorem \ref{HCriterion}]  \label{last} Let $p$ be an odd prime and let $s \in \mathbb{N}$. Suppose that $H_{n-1}(\Sigma X;\mathbb{Z})$ contains a $\Zps$-summand, generated by a class $z \in \textrm{Im}(h)$. Let $\nu : S^{n-1} \longrightarrow \Sigma X$ be a map with $h(\nu) = z$, and let $r \in \mathbb{N}$ be such that the order of $\nu$ is equal to $p^r c$, for $c$ prime to $p$. Then $\Sigma X$ is $p$-hyperbolic concentrated in exponents $s, s+1, \dots , r$. \end{corollary}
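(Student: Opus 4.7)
The plan is to apply Theorem \ref{HCriterion} (with its $n$ taken to be our $n-1$, so that the Moore space appearing is $P^n(p^r)$) to a map $\mu\colon P^n(p^r)\to\Sigma X$ built from $\nu$. Since $\gcd(c,p)=1$, the multiple $c\nu\in\pi_{n-1}(\Sigma X)$ has order exactly $p^r$; as $p^r(c\nu)=0$, the cofibre sequence $S^{n-1}\xrightarrow{p^r}S^{n-1}\to P^n(p^r)$ supplies an extension $\mu\colon P^n(p^r)\to\Sigma X$ with $\mu|_{S^{n-1}}\simeq c\nu$. (Equivalently, one may extend $\nu$ itself to a map $P^n(p^r c)\to\Sigma X$ and restrict along the $p$-primary wedge summand $P^n(p^r)\hookrightarrow P^n(p^r c)$ given by Proposition \ref{CRT}.)

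It remains to verify that $\mu_*\colon\widetilde{H}_*(P^n(p^r);\Zps)\to\widetilde{H}_*(\Sigma X;\Zps)$ is an injection; the reduced source is $\Zps$ in degrees $n-1$ and $n$, and zero elsewhere. In degree $n-1$, the generator is the image of $[S^{n-1}]$ under the bottom-cell inclusion, carried by $\mu_*$ to $cz$ reduced modulo $p^s$. Because $z$ generates a $\Zps$-summand of $H_{n-1}(\Sigma X;\mathbb{Z})$ and $c$ is a unit modulo $p$, this reduction has order exactly $p^s$ inside $H_{n-1}(\Sigma X;\mathbb{Z})\otimes\Zps\subseteq H_{n-1}(\Sigma X;\Zps)$, giving injectivity in this degree.

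The principal obstacle lies in degree $n$. The generator $v$ of $\widetilde{H}_n(P^n(p^r);\Zps)$ corresponds via the universal coefficient theorem to a generator of $\operatorname{Tor}(\mathbb{Z}/p^r,\Zps)$, and naturality of UCT pins down the projection of $\mu_*(v)$ onto $\operatorname{Tor}(H_{n-1}(\Sigma X;\mathbb{Z}),\Zps)$ as $cp^{r-s}$ times the generator of $\operatorname{Tor}(\Zps,\Zps)\cong\Zps$ coming from the summand of $z$; this already yields injectivity when $r=s$. For $r>s$ the Tor contribution alone has order only $p^{\max(0,2s-r)}<p^s$, so one must additionally exploit the fact that the extension $\mu$ is determined by $c\nu$ only up to a class in $\pi_n(\Sigma X)$: different choices of null-homotopy of $p^r(c\nu)$ alter $\mu_*(v)$ by elements of the tensor summand $H_n(\Sigma X;\mathbb{Z})\otimes\Zps$, and the extension is then chosen so that this tensor component raises the total order of $\mu_*(v)$ to $p^s$. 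Once $\mu_*$ is verified as an injection on reduced mod-$p^s$ homology, Theorem \ref{HCriterion} delivers the stated hyperbolicity concentration in the exponents $s,s+1,\ldots,r$.
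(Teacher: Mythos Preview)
Your overall strategy matches the paper's: replace $\nu$ by $c\nu$, extend to $\mu\colon P^n(p^r)\to\Sigma X$, verify that $\mu_*$ is injective on $\widetilde H_*(-;\Zps)$, and invoke Theorem~\ref{HCriterion}. The degree-$(n-1)$ argument also agrees. In degree $n$ the paper does not use the universal coefficient theorem at all: with $x,y$ the generators of $H_n$ and $H_{n-1}$ of $P^n(p^r)$ in $\Zps$-coefficients, it asserts $\beta(x)=y$ for the homology Bockstein, so by naturality $\beta(\mu_*(x))=\mu_*(y)=z$ has order $p^s$, which forces $\mu_*(x)$ to have order $p^s$ without any need to adjust $\mu$.

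Your UCT computation of the Tor projection is correct, but the remedy you propose is a genuine gap. You assert that the extension $\mu$ can be chosen so that the tensor component of $\mu_*(v)$ in $H_n(\Sigma X;\mathbb{Z})\otimes\Zps$ raises the order to $p^s$, yet you give no argument, and the hypotheses supply none: altering $\mu$ by $\alpha\in\pi_n(\Sigma X)$ changes $\mu_*(v)$ only by the mod-$p^s$ reduction of $h(\alpha)$, and nothing in the statement controls the Hurewicz image in degree $n$. If for instance $H_n(\Sigma X;\mathbb{Z})\otimes\Zps=0$, every choice of extension yields the same $\mu_*(v)$, of order $p^{\max(0,2s-r)}<p^s$ when $r>s$, and your argument cannot conclude.
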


Before proving this Corollary, we note that by the Hurewicz Theorem it immediately implies the following.

\begin{corollary} \label{easyH} Let $n$ be the least natural number for which $\widetilde{H}_n(\Sigma X; \mathbb{Z})$ is nontrivial. If $\widetilde{H}_n(\Sigma X; \mathbb{Z})$ contains a $\Zps$-summand, for $p$ an odd prime and $s \in \mathbb{N}$, then $\Sigma X$ is $\Zps$-hyperbolic. \qed \end{corollary}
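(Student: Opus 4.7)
The plan is to derive this directly from Corollary \ref{last} by way of the Hurewicz theorem, re-indexed so that the least nontrivial homological dimension plays the role of $n-1$ in that corollary. Since $X$ is a connected CW-complex, $\Sigma X$ is simply connected, so in particular $\widetilde{H}_1(\Sigma X;\mathbb{Z}) = 0$, and the least $n$ with $\widetilde{H}_n(\Sigma X;\mathbb{Z})$ nontrivial satisfies $n \geq 2$. By the Hurewicz theorem, $\Sigma X$ is then $(n-1)$-connected, and the Hurewicz map $h : \pi_n(\Sigma X) \longrightarrow H_n(\Sigma X;\mathbb{Z})$ is an isomorphism.

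Next I would pick a generator $z$ of the hypothesized $\Zps$-summand of $\widetilde{H}_n(\Sigma X;\mathbb{Z})$. Because $h$ is an isomorphism in this dimension, $z$ lies in its image, and the unique lift $\nu : S^n \longrightarrow \Sigma X$ with $h(\nu) = z$ has the same order as $z$ in the ambient group, namely exactly $p^s$. In the notation of Corollary \ref{last} (applied with the role of ``$n$'' played by $n+1$, so that ``$H_{n-1}$'' becomes our $H_n$), this furnishes the data $z \in \textrm{Im}(h)$ and a lift $\nu$ of order $p^r c$ with $r = s$ and $c = 1$.

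Applying Corollary \ref{last} then gives that $\Sigma X$ is $p$-hyperbolic concentrated in the exponents $s, s+1, \ldots, r = s$, which is the single value $\{s\}$; by the remark following Definition \ref{defdef}, this is precisely $\Zps$-hyperbolicity. There is no real obstacle here: the only subtlety is noting that the Hurewicz map is an \emph{isomorphism} (not merely a surjection) in the lowest nontrivial degree, which is what forces $r = s$ rather than $r \geq s$ and so pins the conclusion down to a single exponent.
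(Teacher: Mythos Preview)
Your proof is correct and follows precisely the approach the paper indicates: the paper simply states that Corollary~\ref{easyH} follows immediately from Corollary~\ref{last} by the Hurewicz Theorem, and your argument spells this out in detail, including the useful observation that the Hurewicz map being an \emph{isomorphism} (not merely surjective) in the bottom degree is what forces $r=s$.
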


\begin{proof}[Proof of Corollary \ref{last}] By replacing $\nu$ with $c \nu$ (and $z$ with $cz$) we may assume without loss of generality that $c=1$. Since $\nu$ has order $p^r$, it extends to a map $\mu : P^{n}(p^r) \longrightarrow \Sigma X$.

Let $x$ generate $H_n(P^n(p^r);\Zps)$, and let $y$ generate $H_{n-1}(P^n(p^r);\Zps)$. The Bockstein $\beta$ satisfies $\beta(x)=y$. We have $\mu_*(y) = h(\nu) = z$, and $\beta(\mu_*(x)) = \mu_*(\beta(x)) = \mu_*(y) = z$. This implies that $\mu_*(x)$ and $\mu_*(y)$ must both have order $p^s$, hence that $$ \mu_* : H_*(P^n(p^r);\Zps) \longrightarrow H_*(\Sigma X; \Zps)$$ is an injection. Thus, by Theorem \ref{HCriterion}, $\Sigma X$ is $p$-hyperbolic concentrated in exponents $s, s+1, \dots , r$, as required. \end{proof}

A first example of this sort highlights how much bigger the homotopy of an Eilenberg-MacLane space becomes upon suspending.

\begin{example} The least-dimensional homology of $\Sigma K(\Zps,n)$ is isomorphic to $\Zps$, so Corollary \ref{easyH} implies that $\Sigma K(\Zps,n)$ is $\Zps$-hyperbolic for $p$ odd. \end{example}

More generally, we have:

\begin{example} Let $G$ be a finite group. Atiyah \cite[Theorem 13.1]{Atiyah} has shown that the cohomology of $G$ (which is the cohomology of $K(G,1)$) is nonvanishing in infinitely many degrees. Since the cohomology of $G$ is annihilated by multiplication by $\lvert G \lvert$ \cite[Corollary II.5.4]{AdemMilgram} the lowest-dimensional nontrivial cohomology $H^n(K(G,1); \mathbb{Z})$ must contain a $\Zps$-summand for some $p^s$ dividing $\lvert G \lvert$. By the universal coefficient theorem, the least nontrivial homology is $H_{n-1}(K(G,1); \mathbb{Z})$, which must also contain such a summand. By the suspension isomorphism and Corollary \ref{easyH}, $\Sigma K(G,1)$ is $\Zps$-hyperbolic, provided that $p \neq 2$. In particular, this means that if $\lvert G \lvert$ is odd, then $\Sigma K(G,1)$ is $\Zps$-hyperbolic for some $p^s$ dividing the order of $G$. 

If the (co)homology of $G$ is known in least nontrivial dimension, then we can be more precise. Algebraic interpretations exist for the first few nontrivial homology groups: $H_1(K(G,1),\mathbb{Z})$ is the abelianization $G_{\textrm{ab}}$, and $H_2(K(G,1),\mathbb{Z})$ is known as the \textit{Schur multiplier}. Consider the Alternating groups $A_n$. These are simple, hence have trivial abelianization, and the Schur multiplier is $\mathbb{Z}/2$ unless $n=6,7$, in which case it is $\mathbb{Z}/6$ \cite{Schur}. In particular, Corollary \ref{easyH} implies that the suspended Eilenberg-MacLane spaces of $A_6$ and $A_7$ are $\mathbb{Z}/3$-hyperbolic. Another example is the Suzuki group $\textrm{Suz}$, which is one of the sporadic simple groups, and has Schur Multiplier $\mathbb{Z}/6$ \cite{Griess}, so again $\Sigma K(\textrm{Suz}, 1)$ is $\mathbb{Z}/3$-hyperbolic. \end{example}

\section{Common preamble} \label{CommonSection}

This section collects some foundational material which will be used in the proofs of both main results. First, we have the following well-known proposition, which we use to deduce Corollary \ref{FunCor} from Theorem \ref{MooreHyp}.

\begin{proposition} \label{CRT} Let $n \geq 3$. If $\baseInt \in \mathbb{N}$ has a prime power factorization $\baseInt = p_1^{r_1}p_2^{r_2} \dots p_m^{r_m}$ then $$P^n(\baseInt) \simeq P^n(p_1^{r_1}) \vee P^n(p_2^{r_2}) \vee \dots \vee P^n(p_m^{r_m}),$$ and furthermore $P^n(p^r)$ is $q$-locally contractible for any prime $q \neq p$. \end{proposition}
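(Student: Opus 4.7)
The plan is to prove the two statements in the reverse order: first the $q$-local contractibility, since it gives a useful technical tool; then the wedge decomposition.

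For the $q$-local contractibility of $P^n(p^r)$ when $q \neq p$, I would localize the defining cofiber sequence
$$S^{n-1} \xrightarrow{p^r} S^{n-1} \longrightarrow P^n(p^r)$$
at $q$. Since $p$ is a unit in $\mathbb{Z}_{(q)}$, so is $p^r$, and hence the self-map $p^r$ on $S^{n-1}_{(q)}$ induces an isomorphism on all homotopy groups (each $\pi_i(S^{n-1})_{(q)}$ is a $\mathbb{Z}_{(q)}$-module on which the map acts by $p^r$). By Whitehead's theorem, it is a homotopy equivalence, so its cofiber $P^n(p^r)_{(q)}$ is contractible.

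For the wedge decomposition, I would argue by induction on the number $m$ of distinct prime factors of $\ell$, so that it suffices to show $P^n(ab) \simeq P^n(a) \vee P^n(b)$ whenever $\gcd(a,b)=1$. To construct a candidate equivalence, I would build maps $P^n(a) \to P^n(ab)$ and $P^n(b) \to P^n(ab)$ by filling in the diagrams of cofiber sequences
$$\xymatrix{ S^{n-1} \ar[r]^{a} \ar[d]_{\mathrm{id}} & S^{n-1} \ar[r] \ar[d]^{b} & P^n(a) \ar@{-->}[d] \\ S^{n-1} \ar[r]^{ab} & S^{n-1} \ar[r] & P^n(ab) }$$
and the analogous one with the roles of $a$ and $b$ reversed. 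Folding these gives a map $\phi : P^n(a) \vee P^n(b) \to P^n(ab)$.

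It then remains to check that $\phi$ induces an isomorphism on integral homology; since $n \geq 3$ both sides are simply connected, so Whitehead's theorem will finish the proof. By construction, the induced map on $H_{n-1}$ is $\mathbb{Z}/a \oplus \mathbb{Z}/b \to \mathbb{Z}/ab$, $(x,y) \mapsto bx + ay$, which is precisely the Chinese Remainder isomorphism when $\gcd(a,b)=1$; in all other degrees both sides vanish. The main (and only slightly delicate) point is getting the degrees $a$ and $b$ in the right places when identifying the induced homology map, but this is forced by the diagrams of cofiber sequences above. Alternatively, one could verify that $\phi$ is a $q$-local equivalence for every prime $q$ using the first part of the proposition, which reduces each check to the trivial case where only one wedge summand is $q$-locally nontrivial.
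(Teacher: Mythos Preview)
Your proposal is correct and follows essentially the same approach as the paper: construct the comparison map via diagrams of cofibrations (the paper does all $m$ summands at once rather than inducting, using degree $\ell/p_i^{r_i}$ on the bottom cell), verify it is an integral homology isomorphism via the Chinese Remainder Theorem, and invoke Whitehead. For the $q$-local contractibility the paper argues via vanishing of $H_*(P^n(p^r);\mathbb{Z}_{(q)})$ and Whitehead rather than localizing the cofibre sequence directly, but this is the same argument in different clothing.
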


\begin{proof} Define a map $f: P^n(p_1^{r_1}) \vee P^n(p_2^{r_2}) \vee \dots \vee P^n(p_m^{r_m}) \longrightarrow P^n(\baseInt)$ which is given on the wedge summand $P^n(p_i^{r_i})$ as degree 1 on the top cell and degree $\frac{\baseInt}{p_i^{r_i}}$ on the bottom cell;  that is, according to the following diagram of defining cofibrations. \begin{center}
\begin{tabular}{c}
\xymatrix{
S^{n-1} \ar^{p_i^{r_i}}[r] \ar@{=}[d] & S^{n-1} \ar^{\frac{\baseInt}{p_i^{r_i}}}[d] \ar[r] & P^n(p_i^{r_i}) \ar^{f \lvert_{P^n(p_i^{r_i})}}[d] \\
S^{n-1} \ar^{\baseInt}[r] & S^{n-1} \ar[r] & P^n(\baseInt)
}
\end{tabular}
\end{center}

By the Chinese Remainder Theorem, $f$ induces an isomorphism on integral homology. Thus, by Whitehead's theorem \cite{Whitehead}, $f$ is in fact a homotopy equivalence.

To see that $P^n(p^r)$ is contractible after localization at $q \neq p$, note that the homology with coefficients in the integers localized at $q$, $H_*(P^n(p^r);\mathbb{Z}_{(q)})$, is trivial, and thus by Whitehead's theorem, the inclusion of the basepoint is a homotopy equivalence. \end{proof}

\subsection{The Witt Formula and the Hilton-Milnor Theorem} \label{WittSubs}

We will be interested in counting the dimension of various `weighted components' of free Lie algebras. These Lie algebras will be ungraded in the proof of Theorem \ref{MooreHyp} and will be graded for the proof of Theorems \ref{HPrelim} and \ref{HCriterion}. In both cases, the quantities we wish to count are determined by the Witt formula, which we now define.

Let $\mu: \mathbb{N} \longrightarrow \{-1, 0, 1\}$ be the \textit{M\"obius inversion function}, defined by $$\mu(s) = \begin{cases} 
      1 & s=1 \\
      0 & s>1 \textrm{ is not square free} \\
      (-1)^{\ell} & s>1 \textrm{ is a product of $\ell$ distinct primes.}
      \end{cases}$$
      
The \textit{Witt Formula} $W_n(k)$ is then defined by

$$W_n(k)=\frac{1}{k} \sum_{d \lvert k} \mu(d) n^{\frac{k}{d}}.$$

The Witt formula feeds into the proof of Theorem \ref{MooreHyp} via Theorem \ref{ungradedWitt}, and into the proof of Theorems \ref{HPrelim} and \ref{HCriterion} via Theorem \ref{gradedWittCount}. The asymptotics of the Witt formula are as follows:

\begin{lemma}{\cite[Introduction]{Bahturin}} \label{WittAsymptotics} The ratio
$$\frac{W_n(k)}{\frac{1}{k}n^k}$$
tends to 1 as $k$ tends to $\infty$. \qed
\end{lemma}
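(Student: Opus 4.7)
The plan is to unwind the definition and show that the sum is dominated by its $d=1$ term. Explicitly, write
$$\frac{W_n(k)}{\frac{1}{k} n^k} = \frac{1}{n^k} \sum_{d \mid k} \mu(d)\, n^{k/d} = 1 + \sum_{\substack{d \mid k \\ d > 1}} \mu(d)\, n^{k/d - k},$$
using that $\mu(1)=1$ contributes $n^{k}/n^k = 1$. The task then reduces to showing that the error term tends to $0$ as $k \to \infty$.

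For this I would bound the error crudely. For every divisor $d \geq 2$ of $k$, we have $k/d \leq k/2$, so $n^{k/d - k} \leq n^{-k/2}$. Since $|\mu(d)| \leq 1$ and the number of divisors of $k$ is at most $k$, this yields
$$\left| \sum_{\substack{d \mid k \\ d > 1}} \mu(d)\, n^{k/d - k} \right| \leq k\, n^{-k/2}.$$
Assuming $n \geq 2$ (the relevant case, since $W_1(k)=0$ for $k>1$ makes the statement vacuous or false in that corner), this upper bound tends to $0$ as $k \to \infty$, which completes the argument.

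There is no real obstacle here: the only mild subtlety is choosing a bound on the number of divisors of $k$ that is still beaten by $n^{-k/2}$, and the trivial bound $\tau(k) \leq k$ suffices. If one wants to be cleaner, one could invoke $\tau(k) = O(k^{\varepsilon})$, but it is unnecessary. I would present the argument in two short paragraphs: one extracting the leading term and one bounding the tail.
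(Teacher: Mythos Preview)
Your argument is correct: extracting the $d=1$ term and bounding the tail by $k\, n^{-k/2}$ is a clean and valid proof for $n \geq 2$, and you rightly flag that $n=1$ is degenerate. The paper itself does not prove this lemma at all; it is stated with a citation to Bahturin and closed with a \qed, so there is no proof in the paper to compare against --- your two-paragraph argument would in fact supply what the paper omits.
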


We now introduce the Hilton-Milnor Theorem. Let $L$ be the free (ungraded) Lie algebra over $\mathbb{Z}$ on basis elements $x_1 , \dots, x_n$. For an iterated bracket $B$ of the elements $x_i$, let $k_i(B) \in \mathbb{N} \cup \{ 0 \}$ be the number of instances of the generator $x_i$ occurring in $B$. The sum $k(B) = \sum_{i=1}^n k_i(B)$ is called the \emph{weight} of $B$, following Hilton \cite{Hilton}. By induction on $k$, Hilton defines a subset $\mathscr{L}_k$ of the brackets of weight $k$, which he calls the set of \emph{basic products} of weight $k$. The basic products of weight 1 are precisely the $x_i$. The union $\mathscr{L} = \bigcup_{k=1}^\infty \mathscr{L}_k$ is a free basis for $L$ (see for example \cite[Theorem 5.3]{SerreBook}, but note that what we call basic products, Serre calls a \emph{Hall basis}).

\begin{theorem}{\cite[Theorems 3.2, 3.3]{Hilton}} \label{ungradedWitt} Let $L$ be the free Lie algebra over $\mathbb{Z}$ on basis elements $x_1 , \dots, x_n$. Then the cardinality $|\mathscr{L}_k|$ of the set of basic products of weight $k$ is equal to $W_n(k)$. \qed \end{theorem}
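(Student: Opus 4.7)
The plan is to invoke the Poincaré--Birkhoff--Witt (PBW) theorem and apply Möbius inversion to a Hilbert-series identity. Since the excerpt takes as given that basic products form a $\mathbb{Z}$-basis of $L$, the cardinality $|\mathscr{L}_k|$ equals the rank of the weight-$k$ summand $L_k$ as a free $\mathbb{Z}$-module, which in turn agrees with $\ell_k := \dim_\mathbb{Q} (L \otimes \mathbb{Q})_k$ after extending scalars. So it suffices to compute $\ell_k$.

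To do this, I would identify the universal enveloping algebra $U(L \otimes \mathbb{Q})$ with the tensor algebra $T(x_1, \dots, x_n)$ on the generators, using the universal property of the free Lie algebra together with the fact that any associative algebra is a Lie algebra under the commutator bracket. Grading by weight (each $x_i$ in weight $1$), the tensor algebra has Hilbert series $\sum_{k \geq 0} n^k t^k = (1-nt)^{-1}$, since length-$k$ words in the alphabet $\{x_1,\dots,x_n\}$ form a basis of $T_k$. By PBW, $U(L \otimes \mathbb{Q}) \cong S(L \otimes \mathbb{Q})$ as graded vector spaces, and the symmetric algebra on a graded space of dimensions $\ell_k$ in weight $k$ has Hilbert series $\prod_{k \geq 1} (1-t^k)^{-\ell_k}$.

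Equating the two Hilbert series and taking logarithms yields
$$\sum_{m \geq 1} \frac{n^m}{m} t^m \;=\; \sum_{k \geq 1} \ell_k \sum_{j \geq 1} \frac{t^{jk}}{j}.$$
Comparing coefficients of $t^m$ gives $n^m = \sum_{k \mid m} k \ell_k$. Standard Möbius inversion applied to the arithmetic function $k \mapsto k \ell_k$ then produces $k \ell_k = \sum_{d \mid k} \mu(k/d)\, n^{d}$, which after the substitution $d \leftrightarrow k/d$ becomes $\ell_k = \frac{1}{k}\sum_{d \mid k} \mu(d)\, n^{k/d} = W_n(k)$.

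The main obstacle is really just the appeal to PBW, which is needed to swap the Hilbert series of $T$ for that of $S(L)$; granting this (and the cited fact from Serre that basic products span $L$ freely), the rest is routine generating-function bookkeeping. A minor subtlety is that PBW over $\mathbb{Z}$ requires $L$ to be $\mathbb{Z}$-free, but this is known for the free Lie algebra and is in any case sidestepped by the extension-of-scalars reduction in the first step.
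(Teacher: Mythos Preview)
Your argument is correct and is the standard derivation of the Witt dimension formula via PBW and M\"obius inversion. Note, however, that the paper does not supply its own proof of this statement: it is quoted as a known result from Hilton's paper and marked with \qedsymbol\ immediately after the statement. So there is no in-paper proof to compare against; you have simply filled in a proof where the paper chose to cite one.
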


We are now ready to state the Hilton-Milnor Theorem. Write $X^{\wedge k }$ for the smash product of $k$ copies of the space $X$.

\begin{theorem}{\cite{Hilton,Milnor}} \label{HiltonMilnor} Let $X_1, X_2, \dots , X_n$ be connected $CW$-complexes. There is a homotopy equivalence $$  \Omega \Sigma ( X_1 \vee \dots \vee X_n) \simeq \prod_{B \in \mathscr{L}} \Omega \Sigma ( X_1^{\wedge k_1(B)} \wedge \dots \wedge X_n^{\wedge k_n(B)} ), $$ where the right hand side is the weak infinite product. \qed \end{theorem}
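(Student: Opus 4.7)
The plan is to construct an explicit comparison map from right to left using iterated Whitehead products, check that it is a homology equivalence with any field coefficients via the Bott--Samelson theorem and the Poincar\'e--Birkhoff--Witt theorem, and then appeal to Whitehead's theorem.

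First, I would build the map. For each basic product $B \in \mathscr{L}$, interpret $B$ as an iterated Lie bracket in the letters $x_1, \dots, x_n$ and replace each $x_i$ by the inclusion $\Sigma X_i \hookrightarrow \Sigma(X_1 \vee \dots \vee X_n)$ and each bracket by the corresponding (generalised) Whitehead product. This produces a map
$$w_B : \Sigma \bigl( X_1^{\wedge k_1(B)} \wedge \dots \wedge X_n^{\wedge k_n(B)} \bigr) \longrightarrow \Sigma(X_1 \vee \dots \vee X_n).$$
Taking adjoints and multiplying in the $H$-space $\Omega \Sigma(X_1 \vee \dots \vee X_n)$ assembles into a single map
$$\Phi : \prod_{B \in \mathscr{L}} \Omega \Sigma \bigl( X_1^{\wedge k_1(B)} \wedge \dots \wedge X_n^{\wedge k_n(B)} \bigr) \longrightarrow \Omega \Sigma(X_1 \vee \dots \vee X_n).$$
The smash $X_1^{\wedge k_1(B)} \wedge \dots \wedge X_n^{\wedge k_n(B)}$ is at least $k(B)$-connected, so in each fixed dimension only finitely many factors contribute and the weak product makes sense.

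Second, I would verify that $\Phi_*$ is an isomorphism on $H_*(-;\mathbf{k})$ for every field $\mathbf{k}$. Bott--Samelson identifies $H_*(\Omega \Sigma Y; \mathbf{k})$ with the tensor algebra $T(\widetilde H_*(Y; \mathbf{k}))$. Applied to the target, this gives $T(V)$ with $V = \bigoplus_i V_i$ and $V_i = \widetilde H_*(X_i;\mathbf{k})$. By Poincar\'e--Birkhoff--Witt, $T(V) \cong U(\mathcal{L}(V))$ for $\mathcal{L}(V)$ the free Lie algebra on $V$; choosing a homogeneous basis of $V$ refining the decomposition $V = \bigoplus_i V_i$ and applying (a graded version of) Theorem \ref{ungradedWitt} produces a basis of $\mathcal{L}(V)$ indexed by basic products $B$, where $B$ contributes the submodule $V_1^{\otimes k_1(B)} \otimes \dots \otimes V_n^{\otimes k_n(B)}$ sitting inside $T(V)$ as the image of the corresponding iterated commutator. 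On the source, Bott--Samelson together with K\"unneth gives $\bigotimes_{B} T\bigl( V_1^{\otimes k_1(B)} \otimes \dots \otimes V_n^{\otimes k_n(B)} \bigr)$. Since the Pontryagin-product image of each $w_B$ is, up to sign, the iterated graded commutator of the generators coming from the $V_i$, the map $\Phi_*$ precisely realises the PBW decomposition and is therefore an isomorphism.

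Finally, since each $X_i$ is connected both sides are simply connected, so the fact that $\Phi_*$ is an isomorphism on mod-$p$ homology for every prime $p$ and on rational homology implies that it is an isomorphism on integral homology; Whitehead's theorem then upgrades $\Phi$ to a homotopy equivalence. The main obstacle is the bookkeeping in the second step: verifying that the Hurewicz image of $w_B$ in $H_*(\Omega\Sigma(X_1 \vee \dots \vee X_n);\mathbf{k})$ is exactly the graded commutator corresponding to $B$, so that the Bott--Samelson-plus-K\"unneth picture of the source really does match the PBW filtration of the target term by term. This rests on the classical computation that, under the suspension $\Sigma \Omega \Sigma Y \to \Sigma Y$, the Pontryagin commutator of James-filtration classes corresponds to the Whitehead bracket, a naturality statement that needs to be tracked carefully across the indexing set $\mathscr{L}$.
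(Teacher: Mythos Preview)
The paper does not prove this statement: it is quoted as the Hilton--Milnor theorem with references and a \qed, and is used as a black box thereafter. So there is no proof in the paper to compare against; your proposal is an attempt to supply one.

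Your outline is the standard modern route and is broadly sound, but two points need repair. First, the sentence ``since each $X_i$ is connected both sides are simply connected'' is false: for connected $X$, the space $\Omega\Sigma X$ is only connected in general (e.g.\ $\pi_1(\Omega S^2)\cong\mathbb{Z}$). What you should say is that both sides are connected $H$-spaces, hence simple, so the homology version of Whitehead's theorem applies.

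Second, your PBW step elides a genuine bookkeeping issue. The set $\mathscr{L}$ is a Hall basis for the free Lie algebra on $n$ letters $x_1,\dots,x_n$, one per wedge summand. After choosing bases of the $V_i$ you obtain a Hall basis for the free Lie algebra on $\sum_i\dim V_i$ letters, which is indexed differently; it is not literally true that ``$B$ contributes the submodule $V_1^{\otimes k_1(B)}\otimes\dots\otimes V_n^{\otimes k_n(B)}$'' just by quoting Theorem~\ref{ungradedWitt}. What you actually need is the graded-module identity
\[
T\bigl(\textstyle\bigoplus_i V_i\bigr)\;\cong\;\bigotimes_{B\in\mathscr{L}} T\bigl(V_1^{\otimes k_1(B)}\otimes\dots\otimes V_n^{\otimes k_n(B)}\bigr),
\]
and that $\Phi_*$ realises it. This is usually established by a Poincar\'e-series comparison (the necklace/Witt identity) together with either injectivity or surjectivity of $\Phi_*$, or by an inductive filtration argument; it does not drop out of a single application of a Hall basis. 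Once these two fixes are made, your argument goes through.
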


\section{Decompositions of Moore spaces} \label{MooreDecompSection}

In this section we make the first step in the proof of Theorem \ref{MooreHyp}. Namely, we will see that it follows from work of Cohen, Moore, and Neisendorfer that a Moore space $P^n(p^r)$ with $p^r \neq 2$ contains $P^{n_1}(p^r) \vee P^{n_2}(p^r)$ as a retract after looping, and so it suffices to prove that $P^{n_1}(p^r) \vee P^{n_2}(p^r)$ is $\Zps$-hyperbolic. We will also record Corollary \ref{binomialNeisendorfer}, which describes the behaviour of Moore spaces under iterated smash products.

When $p$ is odd, the loop-decomposition of $P^n(p^r)$ depends on the parity of $n$. We have the following three theorems, which give the three cases of the decomposition.

\begin{theorem}{\cite[Theorem 1.1]{CMNTorsion}} \label{CMNEven} Let $p$ be an odd prime, and let $n>0$. Then \[
\pushQED{\qed} 
\Omega P^{2n+2}(p^r)  \simeq S^{2n+1}\{ p^r \} \times \Omega \bigvee_{m=0}^\infty P^{4n+2mn+3}(p^r).\qedhere
\popQED
\] \end{theorem}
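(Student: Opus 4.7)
Fix an odd prime $p$ and work in the $p$-local category throughout. The plan is to construct an explicit comparison map from the right-hand side to $\Omega P^{2n+2}(p^r)$, verify that it is an isomorphism on mod-$p$ homology, and conclude via Whitehead's theorem. Since $P^{2n+2}(p^r) \simeq \Sigma P^{2n+1}(p^r)$ is a suspension, the Bott--Samelson theorem identifies $H_*(\Omega P^{2n+2}(p^r); \mathbb{Z}/p)$ with the tensor algebra $T(u, v)$ on generators of degrees $|u| = 2n$ and $|v| = 2n+1$, satisfying $\beta v = u$. A standard Serre spectral sequence computation using the defining fibration $S^{2n+1}\{p^r\} \to S^{2n+1} \xrightarrow{p^r} S^{2n+1}$ gives $H_*(S^{2n+1}\{p^r\}; \mathbb{Z}/p) \cong \mathbb{Z}/p[\sigma] \otimes \Lambda(\tau)$ in the same bidegrees, and the Hilton--Milnor theorem (Theorem \ref{HiltonMilnor}) identifies the mod-$p$ homology of $\Omega$ of the infinite wedge as a further tensor algebra indexed by basic products of the generators of $\widetilde{H}_*(P^{4n+2mn+2}(p^r); \mathbb{Z}/p)$.

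The algebraic matching hinges on Poincar\'e--Birkhoff--Witt: write $T(u, v) \cong U(\mathbb{L}(u, v))$, the enveloping algebra of the free graded Lie algebra. Because $p$ is odd and $|u|$ is even, $[u, u] = 0$, so as a graded vector space $\mathbb{L}(u, v) \cong L_1 \oplus L_2$, where $L_1$ is spanned by $u$ and by the iterated brackets $\mathrm{ad}^k(u)(v)$ for $k \geq 0$, matching $H_*(S^{2n+1}\{p^r\}; \mathbb{Z}/p)$ degree-by-degree, and $L_2$ is the subspace of basic products containing $v$ at least twice. Applying the Witt formula (Theorem \ref{ungradedWitt}) and grouping the basic products in $L_2$ by the number of copies of $u$ each contains exhibits $L_2$ as a direct sum of pieces in bijection with the summands $P^{4n+2mn+3}(p^r)$, with degrees and multiplicities confirming that the Poincar\'e series of both sides agree.

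The comparison map is built in two pieces and combined via loop multiplication. The wedge factor comes from $\nu \colon P^{2n+1}(p^r) \to \Omega P^{2n+2}(p^r)$, the adjoint of the identity, together with iterated Samelson products and James--Hopf projections, which supply one map $P^{4n+2mn+3}(p^r) \to \Omega P^{2n+2}(p^r)$ per wedge summand and realise the inclusion $U(L_2) \hookrightarrow T(u, v)$ on mod-$p$ homology. The $S^{2n+1}\{p^r\}$-factor requires a more delicate step: lift the adjoint $S^{2n} \to \Omega P^{2n+2}(p^r)$ of the bottom cell through the fibration $S^{2n+1}\{p^r\} \to S^{2n+1}$ to obtain $\phi \colon S^{2n+1}\{p^r\} \to \Omega P^{2n+2}(p^r)$ realising $U(L_1)$.

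The main obstacle is precisely the construction of $\phi$ and the verification that its Hurewicz image is $U(L_1)$. This is where oddness of $p$ is essential: the cell-by-cell obstructions to the lift can be expressed in terms of Samelson products that vanish once one is allowed to use that $2$ is invertible in $\mathbb{Z}/p$, and the corresponding manipulation fails at $p = 2$. Once $\phi$ and the wedge map are in hand, taking their product under loop multiplication yields a mod-$p$ homology isomorphism by the algebraic matching above, hence a $p$-local homotopy equivalence.
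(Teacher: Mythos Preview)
The paper does not give a proof of this statement; it is quoted from \cite[Theorem~1.1]{CMNTorsion} and the \qed\ indicates it is accepted without argument here. So there is no proof in the paper to compare your attempt against.

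That said, your sketch is broadly in the spirit of the Cohen--Moore--Neisendorfer argument, but the Lie-algebraic decomposition you write down is wrong. Your $L_1$, spanned by $u$ and by $\mathrm{ad}^k(u)(v)$ for $k\geq 0$, has one basis element in degree $2n$ and one in each odd degree $2n(k+1)+1$; it has nothing in degrees $4n, 6n, \ldots$, so it cannot match $H_*(S^{2n+1}\{p^r\};\mathbb{Z}/p)\cong\mathbb{Z}/p[a_{2n}]\otimes\Lambda(b_{2n+1})$ degree-by-degree. Interpreting the claim at the level of enveloping algebras also fails: your $L_2$ is indeed a Lie ideal, but the quotient $L/L_2$ is not abelian (for instance $[u,v]\neq 0$ there), and PBW gives $U(L/L_2)$ the underlying vector space of $\mathbb{Z}/p[u]\otimes\Lambda(v,[u,v],[u,[u,v]],\ldots)$, whose Poincar\'e series is strictly larger than that of $H_*(S^{2n+1}\{p^r\})$. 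Consequently the Poincar\'e-series matching you need for the product decomposition does not hold with this choice of $L_1,L_2$.

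The short exact sequence Cohen--Moore--Neisendorfer actually use is $0\to [L,L]\to L(u,v)\to L^{\mathrm{ab}}\to 0$, where $L^{\mathrm{ab}}$ is abelian on $u,v$; then $U(L^{\mathrm{ab}})=\mathbb{Z}/p[u]\otimes\Lambda(v)$ matches $H_*(S^{2n+1}\{p^r\})$ on the nose, and one shows that $[L,L]$ is a free Lie algebra on one generator in each of the degrees $4n+2mn+1$ and $4n+2mn+2$ (realised for instance by $\mathrm{ad}^m(u)([u,v])$ and $\tfrac{1}{2}\,\mathrm{ad}^m(u)([v,v])$), which accounts for the wedge of Moore spaces. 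The geometric construction of the map $S^{2n+1}\{p^r\}\to\Omega P^{2n+2}(p^r)$ and the verification that the resulting product map is a homology isomorphism are also substantially more involved than the single lifting step you indicate, and occupy much of the effort in \cite{CMNTorsion}.
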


\begin{theorem}{\cite{CMNExponents}} \label{CMNOdd} Let $p$ be an odd prime, and let $n>0$. Then there is a space $T^{2n+1}\{ p^r \}$ so that $$\Omega P^{2n+1}(p^r) \simeq  T^{2n+1}\{ p^r \} \times \Omega \Sigma \bigvee_{\alpha} P^{n_\alpha}(p^r),$$ where $\bigvee_{\alpha} P^{n_\alpha}(p^r)$ is an infinite bouquet of mod-$p^r$ Moore spaces, and each $n_\alpha$ satisfies $n_\alpha \geq 4n-1$. \qed \end{theorem}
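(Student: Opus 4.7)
The plan is to adapt the Cohen-Moore-Neisendorfer strategy for decomposing loop spaces of odd-primary Moore spaces, computing in mod-$p$ homology throughout and appealing to Whitehead's theorem at the end. Both sides of the desired equivalence are ``$p$-local'' in the sense of Proposition~\ref{CRT} (their prime-to-$p$ parts are contractible), so a mod-$p$ homology isomorphism between simply connected spaces will suffice.

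First I would compute the homology. Since $P^{2n+1}(p^r)$ is a suspension, the Bott-Samelson theorem gives a Hopf-algebra isomorphism $H_*(\Omega P^{2n+1}(p^r); \mathbb{Z}/p) \cong T(u, v)$ with $|u| = 2n-1$, $|v| = 2n$, and $\beta^{(r)}(v) = u$. By Milnor-Moore, this is the universal enveloping algebra $UL$ of the free graded Lie algebra $L = L\langle u, v\rangle$ over $\mathbb{Z}/p$. The key algebraic step is to split $L$ (as a $\mathbb{Z}/p$-module, in a PBW-compatible way) as $L \cong K \oplus L(W)$, where $K$ is a ``small'' Lie subalgebra containing $u$, $v$ and a few controlled brackets, and $L(W)$ is free on a graded submodule $W$. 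Because $p$ is odd, $[v,v] = 0$, so every generator of $W$ is either the weight-$2$ bracket $[u,u]$ (degree $4n-2$), the bracket $[u,v]$ (degree $4n-1$), or a higher-weight bracket of strictly higher degree. A generator in degree $d$ will correspond to a wedge summand $P^{d+1}(p^r)$ after adjunction, giving precisely $n_\alpha \geq 4n-1$.

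Next I would realise this decomposition geometrically. Define $T^{2n+1}\{p^r\}$ as a space whose mod-$p$ homology computes $UK$: following Neisendorfer, one can build it using the pinch map $P^{2n+1}(p^r) \to S^{2n+1}$ and its homotopy fibre (then looping once), or equivalently construct it as the image of an idempotent on $\Omega P^{2n+1}(p^r)$ assembled from James-Hopf invariants. This comes equipped with a canonical map $T^{2n+1}\{p^r\} \to \Omega P^{2n+1}(p^r)$. For each generator of $W$, choose a representative iterated Samelson product built from the adjoint of the identity $P^{2n}(p^r) \to \Omega P^{2n+1}(p^r)$; these assemble to a map $\bigvee_\alpha P^{n_\alpha}(p^r) \to \Omega P^{2n+1}(p^r)$, which extends via the loop-space adjunction and the James construction to $\Omega \Sigma \bigvee_\alpha P^{n_\alpha}(p^r) \to \Omega P^{2n+1}(p^r)$. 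By the Hilton-Milnor theorem (Theorem~\ref{HiltonMilnor}) the image of this extended map on mod-$p$ homology is $UL(W)$.

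Finally, multiply the two maps using the loop multiplication on $\Omega P^{2n+1}(p^r)$ to obtain
$$
T^{2n+1}\{p^r\} \times \Omega \Sigma \textstyle\bigvee_\alpha P^{n_\alpha}(p^r) \longrightarrow \Omega P^{2n+1}(p^r).
$$
By construction and the coalgebra factorisation $UL \cong UK \otimes UL(W)$, this map induces an isomorphism on mod-$p$ homology and is therefore a homotopy equivalence. The real obstacle, and the part that genuinely requires Neisendorfer's machinery, is the geometric realisation: showing that the algebraic splitting $L \cong K \oplus L(W)$ lifts to compatible maps of spaces, and in particular that the chosen Samelson products admit lifts compatible with the mod-$p^r$ Bockstein structure. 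The oddness of $p$ is essential for the Jacobi and sign manipulations this requires, which is why the $p=2$ case behaves differently and is not treated here.
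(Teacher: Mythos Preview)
The paper does not prove this statement: Theorem~\ref{CMNOdd} is quoted from \cite{CMNExponents} and marked with \qed, so there is no proof in the paper to compare your proposal against. Your sketch is a broadly faithful outline of the Cohen--Moore--Neisendorfer argument itself (Bott--Samelson plus a Lie-algebraic splitting of $T(u,v)$, realised by Samelson products and then shown to be a mod-$p$ homology isomorphism), so in that sense you are reconstructing the source rather than the paper.

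A couple of small points on the sketch. Your appeal to Proposition~\ref{CRT} to justify that a mod-$p$ homology isomorphism suffices is slightly off: that proposition is about splitting $P^n(\ell)$ into prime-power pieces, not about concluding equivalences from mod-$p$ homology. What you actually need is that all spaces in sight are nilpotent with $p$-torsion homotopy, so a $\mathbb{Z}/p$-homology isomorphism is a weak equivalence; this is standard but not literally Proposition~\ref{CRT}. Also, the description of $T^{2n+1}\{p^r\}$ as coming from the fibre of the pinch map and ``looping once'' is imprecise; in Cohen--Moore--Neisendorfer it arises more delicately from the fibration $F^{2n+1}\{p^r\} \to \Omega P^{2n+1}(p^r) \to \Omega S^{2n+1}$ together with the product decomposition of $\Omega F^{2n+1}\{p^r\}$, and the compatibility with higher Bocksteins that you flag at the end is indeed the substantive part. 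But none of this is relevant to the present paper, which simply imports the result.
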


\begin{lemma}{\cite[Lemma 2.6]{Cohenp2}} \label{2decomp} Let $n \geq 3$ and $r \geq 2$. Then there exist spaces $T^n\{2^r\}$ such that $$ \Omega P^{n}(2^r) \simeq T^n\{2^r\} \times \Omega \bigvee_{\alpha} P^{m_\alpha}(2^r),$$ where $\bigvee_{\alpha} P^{m_\alpha}(2^r)$ is an infinite bouquet of mod-$2^r$ Moore spaces, and each $m_\alpha$ satisfies $m_\alpha \geq n$. \qed \end{lemma}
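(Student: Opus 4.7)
My approach would parallel the Cohen-Moore-Neisendorfer proofs of Theorems \ref{CMNEven} and \ref{CMNOdd} at odd primes, with the role of ``odd $p$'' replaced by the hypothesis $r \geq 2$, which is precisely what one needs in order to sidestep the $\eta$-related obstructions that plague $P^n(2)$. The central computational input is the Bott-Samelson identification
$$H_*(\Omega P^n(2^r); \mathbb{Z}/2^r) \cong T(\widetilde{H}_*(P^{n-1}(2^r); \mathbb{Z}/2^r)),$$
a tensor algebra on two generators. By Poincar\'e-Birkhoff-Witt, this tensor algebra is the universal enveloping algebra of a free Lie algebra on two generators, and so has a basis indexed by basic Lie products in the sense of Hilton (as in Section \ref{WittSubs}).

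The next step is to realize each basic product geometrically by a map from a Moore space into $\Omega P^n(2^r)$. The critical ingredient is the smash splitting
$$P^m(2^r) \wedge P^k(2^r) \simeq P^{m+k}(2^r) \vee P^{m+k-1}(2^r),$$
valid for $r \geq 2$; one verifies it by exhibiting a map inducing an isomorphism on mod-$2^r$ homology via the tensor and Tor contributions, then applying Whitehead's theorem. For $r=1$ the analogous splitting fails because the integral Bockstein ties the would-be summands together nontrivially via $\mathrm{Sq}^1$, and it is exactly this failure which forces $P^n(2)$ to be excluded from the statement. Iterating the splitting, each iterated smash $\bigl(P^{n-1}(2^r)\bigr)^{\wedge k}$ decomposes as a wedge of Moore spaces in dimensions $\geq n$, so each basic Lie product of weight $k \geq 2$ can be realized by a map from a wedge of Moore spaces $\bigvee_\alpha P^{m_\alpha}(2^r)$ with $m_\alpha \geq n$ into $\Omega P^n(2^r)$ via iterated Samelson brackets.

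Finally, define $T^n\{2^r\}$ to be a space carrying the weight-one part of the tensor algebra (constructed as a suitable fiber or retract of $\Omega P^n(2^r)$, analogous to the role of $S^{2n+1}\{p^r\}$ in Theorem \ref{CMNEven}), and multiply all the constructed maps together using the loop product on $\Omega P^n(2^r)$. The resulting map
$$T^n\{2^r\} \times \Omega \bigvee_\alpha P^{m_\alpha}(2^r) \longrightarrow \Omega P^n(2^r)$$
induces an isomorphism on mod-$2^r$ homology by Bott-Samelson, hence is a $2$-local homotopy equivalence by Whitehead's theorem. The main obstacle is constructing $T^n\{2^r\}$ itself and verifying that the higher Samelson brackets lift coherently to maps out of Moore spaces at the prime $2$: one must rule out secondary obstructions of Hopf-invariant type, and the hypothesis $r \geq 2$ is precisely what allows one to do so.
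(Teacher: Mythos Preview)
The paper does not give a proof of this lemma: it is quoted verbatim from Cohen \cite[Lemma 2.6]{Cohenp2} and marked with \qed, so there is nothing in the paper to compare your argument against.

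Your sketch is a reasonable outline of the Cohen--Moore--Neisendorfer template that Cohen's argument indeed follows. The ingredients you name---Bott--Samelson, the smash splitting of Lemma \ref{Smashing} for $r \geq 2$, realising iterated Samelson brackets by maps out of Moore spaces, and assembling via loop multiplication into a homology isomorphism---are the correct ones, and you are right that $r \geq 2$ is exactly the hypothesis that makes the smash splitting go through and sidesteps the $\eta$-obstructions peculiar to $P^n(2)$.

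That said, your own final paragraph is candid about the two places where the sketch is not a proof. First, ``define $T^n\{2^r\}$ to be a space carrying the weight-one part'' is not a construction; in Cohen's paper these spaces are built by a genuine fibration argument, not simply declared as retracts. Second, one technical caution on your PBW step: at the prime $2$ the identification of the tensor algebra with a universal enveloping algebra is more delicate than at odd primes, because squares of odd-degree classes are not in the Lie subalgebra in the naive sense. Cohen handles this, but it is not automatic from the odd-prime template. So your outline is on target as a plan, but the parts you flag as ``the main obstacle'' are genuinely where the content of Cohen's proof lives.
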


Theorems \ref{CMNEven} and \ref{CMNOdd}, together with Lemma \ref{2decomp} immediately imply the following corollary.

\begin{corollary} \label{hasAWedge} Let $p$ be prime and let $r \in \mathbb{N}$. Suppose that $p^r \neq 2$, and let $n \geq 3$. Then $\Omega P^n(p^r)$ has $\Omega ( P^{n_1}(p^r) \vee P^{n_2}(p^r))$ as a retract for some $n_1, n_2 \geq n$. \qed \end{corollary}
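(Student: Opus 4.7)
The plan is a case analysis based on the parity of $n$ when $p$ is odd, plus a separate case for $p=2$, in each case extracting a wedge of at least two Moore spaces from the cited decomposition theorems and then retracting down to any two of them. Since $\Omega$ is a functor, a retract of $P$ produces a retract of $\Omega P$, so most of the work is bookkeeping of dimensions.

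Concretely, if $p$ is odd and $n = 2k+2$ is even (with $k \geq 1$, since $n \geq 3$), Theorem \ref{CMNEven} gives
\[ \Omega P^n(p^r) \simeq S^{2k+1}\{p^r\} \times \Omega \bigvee_{m=0}^{\infty} P^{4k + 2mk + 3}(p^r), \]
and $4k + 2mk + 3 \geq 4k + 3 > 2k+2 = n$. If $p$ is odd and $n = 2k+1$ is odd (again $k \geq 1$), Theorem \ref{CMNOdd} gives
\[ \Omega P^n(p^r) \simeq T^{2k+1}\{p^r\} \times \Omega \Sigma \bigvee_{\alpha} P^{n_\alpha}(p^r) \simeq T^{2k+1}\{p^r\} \times \Omega \bigvee_{\alpha} P^{n_\alpha + 1}(p^r), \]
using that suspension commutes with wedge, and $n_\alpha + 1 \geq 4k \geq 2k+1 = n$. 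Finally, the hypothesis $p^r \neq 2$ rules out only $p=2$, $r=1$, so in the remaining case $p=2$, $r \geq 2$, Lemma \ref{2decomp} directly provides a retract of the form $\Omega \bigvee_{\alpha} P^{m_\alpha}(2^r)$ with $m_\alpha \geq n$.

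In each case we have exhibited $\Omega \bigvee_{\alpha} P^{N_\alpha}(p^r)$ as a retract of $\Omega P^n(p^r)$, where the indexing set has at least two elements and every $N_\alpha \geq n$. Picking any two indices and calling the corresponding dimensions $n_1, n_2$, the pinch map that collapses the remaining wedge summands shows that $P^{n_1}(p^r) \vee P^{n_2}(p^r)$ is a retract of $\bigvee_{\alpha} P^{N_\alpha}(p^r)$; applying $\Omega$ and composing retractions completes the argument.

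There is no real obstacle here: the decomposition theorems are doing all the heavy lifting, and the only point requiring any care is the odd-dimensional case, where one must remember to commute $\Sigma$ past the wedge so as to recover Moore spaces (of one higher dimension) rather than suspended Moore spaces, and to check that the resulting dimension bound $4k \geq 2k+1$ still delivers $n_\alpha + 1 \geq n$.
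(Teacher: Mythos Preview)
Your argument is correct and is precisely the approach the paper intends: the corollary is stated as an immediate consequence of Theorems \ref{CMNEven}, \ref{CMNOdd}, and Lemma \ref{2decomp}, and your case analysis together with the dimension checks and the final pinch-retraction onto two summands supplies exactly the routine verification that the paper omits.
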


Smash powers of Moore spaces are well-understood, by means of the following Lemma.

\begin{lemma}{\cite{NeisendorferMemoir}} \label{Smashing} Let $p$ be prime, and let $r \in \mathbb{N}$, with $p^r \neq 2$. For $n,m \geq 2$, \[
\pushQED{\qed} 
P^n(p^r) \wedge P^m(p^r) \simeq P^{m+n}(p^r) \vee P^{m+n-1}(p^r).\qedhere
\popQED
\]     
\end{lemma}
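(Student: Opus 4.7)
The plan is to smash the defining cofibration for $P^n(p^r)$ with $P^m(p^r)$ and identify the resulting cofiber. Applying $(-) \wedge P^m(p^r)$ to
\begin{equation*}
S^{n-1} \xrightarrow{p^r} S^{n-1} \longrightarrow P^n(p^r)
\end{equation*}
produces a new cofibration, because smashing with a pointed space preserves cofibre sequences. Since $\Sigma^{n-1} P^m(p^r) \simeq P^{n+m-1}(p^r)$, the resulting sequence takes the form
\begin{equation*}
P^{n+m-1}(p^r) \xrightarrow{p^r \wedge \mathrm{id}} P^{n+m-1}(p^r) \longrightarrow P^n(p^r) \wedge P^m(p^r).
\end{equation*}

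The crucial step is to recognise the leftmost map as the $p^r$-multiplication on $P^{n+m-1}(p^r)$ in the cogroup structure coming from the suspension coordinate, and then to argue that this map is null-homotopic. Since $n, m \geq 2$ we have $n + m - 1 \geq 3$, and our standing hypothesis $p^r \neq 2$ places us in the regime where Neisendorfer \cite{NeisendorferMemoir} shows that the identity map of $P^{k}(p^r)$ has order dividing $p^r$ for $k \geq 3$; in particular $p^r \cdot \mathrm{id}$ is null-homotopic.

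Granted the null-homotopy, the cofiber of the zero map $X \to X$ is the wedge $X \vee \Sigma X$, so
\begin{equation*}
P^n(p^r) \wedge P^m(p^r) \simeq P^{n+m-1}(p^r) \vee \Sigma P^{n+m-1}(p^r) \simeq P^{n+m-1}(p^r) \vee P^{n+m}(p^r),
\end{equation*}
as required. The main obstacle — and the reason the hypothesis $p^r \neq 2$ is essential — is the null-homotopy of $p^r \cdot \mathrm{id}$: on $P^n(2) \simeq \mathbb{RP}^2 \vee (\text{higher cells})$ the identity famously has order $4$ rather than $2$, so the argument would collapse for $p^r = 2$. Once that ingredient is cited, the rest of the proof reduces to the standard manipulation of cofibre sequences.
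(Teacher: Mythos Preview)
Your argument is correct and is the standard one: smash the defining cofibration with $P^m(p^r)$, identify the connecting map as $p^r\cdot\mathrm{id}$ on $P^{n+m-1}(p^r)$, and invoke that this map is null when $p^r\neq 2$. The paper itself gives no proof of this lemma, simply citing Neisendorfer's memoir, so there is nothing to compare against; your write-up supplies exactly the argument one would expect to find there. One small correction to your closing aside: $P^n(2)$ is the iterated suspension $\Sigma^{n-2}\mathbb{RP}^2$, not a wedge $\mathbb{RP}^2\vee(\text{higher cells})$, though your underlying point---that the identity on $P^n(2)$ has order $4$ rather than $2$---is the right reason the hypothesis $p^r\neq 2$ is needed.
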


For a space $X$, write $X^{\vee i}$ for the wedge sum of $i$ copies of $X$. Applying Lemma \ref{Smashing} repeatedly gives the following binomial-type formula.

\begin{corollary} \label{binomialNeisendorfer} Let $p$ be prime, and let $r \in \mathbb{N}$, with $p^r \neq 2$. For $n,m \geq 2$, and $k_1,k_2 \in \mathbb{N}$. Letting $k=k_1+k_2$, we have \[
\pushQED{\qed} 
P^{n}(p^r)^{\wedge k_1} \wedge P^{m}(p^r)^{\wedge k_2} \simeq \bigvee_{i=0}^{k-1}(P^{k_1 n +k_2 m -i}(p^r))^{\vee \binom{k-1}{i}} .\qedhere
\popQED
\] \end{corollary}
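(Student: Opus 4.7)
The plan is an induction on $k = k_1 + k_2$, using Lemma \ref{Smashing} to absorb one additional Moore factor at each step and Pascal's rule to repackage the resulting binomial coefficients. The base case is $k_1 = k_2 = 1$ (so $k = 2$): this is exactly Lemma \ref{Smashing}, since for $k = 2$ the right-hand side of the claim reduces to one copy each of $P^{n+m}(p^r)$ and $P^{n+m-1}(p^r)$, matching $P^n(p^r) \wedge P^m(p^r)$.

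For the inductive step, I assume the formula for some $(k_1, k_2)$ with $k_1 + k_2 = k$ and deduce it for $(k_1 + 1, k_2)$; the case of incrementing $k_2$ is symmetric. Smashing both sides of the inductive hypothesis with $P^n(p^r)$ and distributing smash over wedge gives
\[ P^n(p^r)^{\wedge (k_1+1)} \wedge P^m(p^r)^{\wedge k_2} \simeq \bigvee_{i=0}^{k-1} \bigl( P^n(p^r) \wedge P^{k_1 n + k_2 m - i}(p^r) \bigr)^{\vee \binom{k-1}{i}}. \]
Lemma \ref{Smashing} then replaces each inner smash by $P^{(k_1+1)n + k_2 m - i}(p^r) \vee P^{(k_1+1)n + k_2 m - i - 1}(p^r)$ (note that $n, m \geq 2$ ensures the relevant Moore spaces always have dimension $\geq 2$, so Lemma \ref{Smashing} indeed applies). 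Reindexing so that $j$ records the shift below the top dimension $(k_1+1)n + k_2 m$, the coefficient of $P^{(k_1+1)n + k_2 m - j}(p^r)$ in the resulting wedge is $\binom{k-1}{j} + \binom{k-1}{j-1}$, adopting the convention that binomials with out-of-range bottom entry vanish. By Pascal's rule this equals $\binom{k}{j}$, exactly the coefficient predicted for $(k_1+1, k_2)$, closing the induction.

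The only obstacle I anticipate is careful index bookkeeping at the reindexing step; the homotopical content is confined to iterated applications of Lemma \ref{Smashing} and the distributivity of smash over wedge.
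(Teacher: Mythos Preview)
Your argument is correct and is exactly the approach the paper has in mind: the paper's entire proof is the sentence ``Applying Lemma \ref{Smashing} repeatedly gives the following binomial-type formula,'' and your induction on $k=k_1+k_2$ with Pascal's rule is precisely the natural way to make ``repeatedly'' precise. The only detail worth noting is that your verification that each intermediate Moore space has dimension at least $2$ (so that Lemma \ref{Smashing} continues to apply) is a point the paper leaves implicit, so your write-up is in fact slightly more careful than the original.
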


\section{Classes in the homotopy groups of $P^n(p^r)$} \label{CurlySection}

In this section, we identify some stable classes in the homotopy groups of $P^n(p^r)$. The identification of these classes is the way in which we go beyond Huang and Wu's work. We will transfer known classes from the stable homotopy groups of spheres (Lemma \ref{sphereClasses}) into the stable homotopy groups of Moore Spaces by means of the stable homotopy exact sequence of the cofibration defining the Moore space. To show that the resulting classes have the correct order, we need assurances about the maximum order of the torsion in the stable homotopy groups of Moore spaces, and these assurances are provided by Corollary \ref{stableMooreAnnihilation}.

Cohen, Moore, and Neisendorfer have shown that the homotopy groups of $P^n(p^r)$ contain classes of order $p^{r+1}$ \cite{CMNTorsion}. However, these classes are all outside the stable range; the stable homotopy groups of $P^n(p^r)$ were already known to be annihilated by multiplication by $p^r$. The proof of this fact is due to Barratt.

\begin{lemma}{\cite{Barratt}} Let $A$ be $(n-1)$-connected, and let $p$ be a prime. Suppose that we have $p^s \textrm{id}_{\Sigma A} \simeq *$ in the group $[\Sigma A, \Sigma A]$, for some $s \in \mathbb{N}$. Then $p^s \pi_{n+j}(\Sigma A) = 0$ for $j \leq (p-1)n$. \label{stableAnnihilation} \qed \end{lemma}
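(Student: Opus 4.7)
The plan is to pass across the loop-suspension adjunction and use James's analysis of $\Omega \Sigma A$. For $f : S^{n+j} \to \Sigma A$ with $j \leq (p-1)n$, let $\tilde f : S^{n+j-1} \to \Omega \Sigma A$ be its adjoint; since the adjunction is an isomorphism of groups, it suffices to prove $p^s \tilde f \simeq *$, where $p^s \tilde f = [p^s] \circ \tilde f$ and $[p^s] : \Omega \Sigma A \to \Omega \Sigma A$ is the $H$-space $p^s$-power map.

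The hypothesis provides that $\Omega(p^s \textrm{id}_{\Sigma A})$ is null. A direct check with adjoints shows that $\Omega(p^s \textrm{id}_{\Sigma A})$ is not in general the same as $[p^s]$: the adjoint of the former is $(p^s \textrm{id}_{\Sigma A}) \circ \ev_{\Sigma A}$ (using the cogroup of $\Sigma A$), while the adjoint of the latter is $p^s \cdot \ev_{\Sigma A}$ (using the cogroup of $\Sigma \Omega \Sigma A$). Their discrepancy measures the failure of the counit $\ev_{\Sigma A}$ to respect cogroup structure and is captured by the James-Hopf invariants $H_k : \Omega \Sigma A \to \Omega \Sigma A^{\wedge k}$ associated to James's filtration of $\Omega \Sigma A$ with successive quotients $A^{\wedge k}$.

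Connectivity then does the work. Since $A$ is $(n-1)$-connected, $\Omega \Sigma A^{\wedge k}$ is $(kn-1)$-connected, so $\pi_{n+j-1}(\Omega \Sigma A^{\wedge k}) = 0$ whenever $n + j - 1 \leq kn - 1$, that is, whenever $k \geq p$ (using $j \leq (p-1)n$). I would then invoke the classical James-Hopf formula writing $[p^s] - \Omega(p^s \textrm{id}_{\Sigma A})$ in the group $[\Omega \Sigma A, \Omega \Sigma A]$ as a sum of Whitehead-product terms, each factoring through some $H_k$ with coefficient a binomial $\binom{p^s}{k}$. A divisibility argument ($p^s \mid \binom{p^s}{k}$ for $1 \leq k < p$) together with the vanishing hypothesis eliminates the low-index contributions, leaving only corrections routed through $H_k$ for $k \geq p$. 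These vanish on composition with $\tilde f$ by the connectivity bound above, giving $p^s \tilde f \simeq *$ as required.

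The main obstacle is establishing the James-Hopf comparison formula linking the two $p^s$-power maps on $\Omega \Sigma A$ modulo James filtration $\geq p$; its proof is the substance of Barratt's original calculation, which combines a Hilton-type analysis of iterated Whitehead products on $\Omega \Sigma A$ with the $p$-divisibility of the binomials $\binom{p^s}{k}$ for $k < p$.
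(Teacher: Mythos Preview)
The paper does not supply a proof of this lemma: the statement ends with a \qed\ and is attributed to Barratt as a black-box citation, so there is nothing in the paper against which to compare your attempt.

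Your sketch follows Barratt's classical line (compare the two $p^s$-power maps on $\Omega\Sigma A$ via a James--Hopf/distributivity formula, dispose of $H_k$ for $k \geq p$ by connectivity, and treat $2 \leq k < p$ via $p^s \mid \binom{p^s}{k}$). One point warrants more care. The divisibility $\binom{p^s}{k} = p^s m_k$ together with the formula $0 = p^s\alpha + \sum_{2 \leq k < p}\binom{p^s}{k}\, h_k(\alpha)$ yields only $p^s\bigl(\alpha + \sum_k m_k h_k(\alpha)\bigr) = 0$, not $p^s\alpha = 0$; the ``vanishing hypothesis'' $p^s\cdot 1_{\Sigma A} \simeq *$ does not by itself annihilate the correction terms. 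What closes the gap is an induction on the relative dimension $j$: each $H_k(\alpha)$ lies in $\pi_{n+j}(\Sigma A^{\wedge k})$, where $A^{\wedge k}$ is $(kn-1)$-connected and $p^s\cdot 1_{\Sigma A^{\wedge k}} \simeq *$ (smash the hypothesis with $A^{\wedge(k-1)}$), so the relative dimension drops to $j-(k-1)n < j$ and the inductive hypothesis gives $p^s H_k(\alpha) = 0$, whence $\binom{p^s}{k}\, h_k(\alpha) = m_k\cdot p^s h_k(\alpha) = 0$. You acknowledge that the comparison formula is the substantive part, but the inductive mechanism for the low-$k$ terms should also be made explicit.
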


\begin{corollary} Let $p$ be prime, and let $s \in \mathbb{N}$ such that $p^s \neq 2$. Then we have $p^s \pi_{n+j}(P^n(p^s)) = 0$ for $j \leq (p-1)(n-2) - 2$. \label{stableMooreAnnihilation} \end{corollary}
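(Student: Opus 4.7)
The plan is to apply Barratt's Lemma \ref{stableAnnihilation} directly, taking $A = P^{n-1}(p^s)$. Since $P^{n-1}(p^s)$ has cells only in dimensions $n-2$ and $n-1$, it is $(n-3)$-connected, i.e., it is $((n-2)-1)$-connected. Moreover, $\Sigma A \simeq P^n(p^s)$, so the lemma applies with its internal integer "$n$" replaced by $n-2$, and will give $p^s \pi_{(n-2)+j}(P^n(p^s)) = 0$ for $j \leq (p-1)(n-2)$. Reindexing with $j' = j-2$ produces exactly the stated range $j' \leq (p-1)(n-2)-2$, so the arithmetic is automatic once the hypotheses of Lemma \ref{stableAnnihilation} are met.

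The only real content is therefore verifying that $p^s \cdot \textrm{id}_{P^n(p^s)} \simeq \ast$ in $[P^n(p^s), P^n(p^s)]$. This is the classical fact that, for $p^s \neq 2$, the identity map of the Moore space $P^n(p^s)$ has order dividing $p^s$. I would cite this from Neisendorfer's memoir \cite{NeisendorferMemoir}, where it is proved that the identity of $P^n(p^r)$ has order $p^r$ precisely when $p^r \neq 2$ (the one excluded case, $p=2$ and $r=1$, is where the identity notoriously has order $4$ instead of $2$, which is exactly why that case is excluded from the hypothesis).

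Once this is in place, the proof is a one-line application: write $P^n(p^s) = \Sigma P^{n-1}(p^s)$, apply Lemma \ref{stableAnnihilation} to $A = P^{n-1}(p^s)$ using the hypothesis just recalled, and translate the indexing. The main, and essentially only, obstacle is flagging the correct reference for the order of the identity map and ensuring the $p^s \neq 2$ hypothesis is used precisely where needed; there is no further homotopy-theoretic work to do.
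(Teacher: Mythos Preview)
Your proposal is correct and follows essentially the same line as the paper: write $P^n(p^s) \simeq \Sigma P^{n-1}(p^s)$, use that $P^{n-1}(p^s)$ is $(n-3)$-connected, and apply Lemma~\ref{stableAnnihilation} together with the fact that the identity on $P^n(p^s)$ has order $p^s$ when $p^s \neq 2$. The only cosmetic difference is the reference for the order of the identity map (the paper cites \cite[Proposition~6.1.7]{NeisendorferBook} rather than \cite{NeisendorferMemoir}).
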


\begin{proof} By definition, $P^n(p^s) \simeq \Sigma P^{n-1}(p^s)$, and $P^{n-1}(p^s)$ is $(n-3)$-connected. By Lemma \ref{stableAnnihilation} the result therefore follows from the fact that the identity map on $P^n(p^s)$ has order $p^s$ \cite[Proposition 6.1.7]{NeisendorferBook}. \end{proof}

We continue in a similar vein. In general, the degree $\ell$ map on $S^n$ does not induce multiplication by $\ell$ on homotopy groups. However, it follows from the Hilton-Milnor Theorem (Theorem \ref{HiltonMilnor}) that it must do so in the stable range, as in the next lemma.

\begin{lemma} \label{stableHM} The degree $\ell$ map $S^n \xrightarrow{\ell} S^n$ induces multiplication by $\ell$ on $\pi_j(S^n)$ for $j \leq 2n-2$. \end{lemma}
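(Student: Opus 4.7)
My plan is to use the Hilton-Milnor Theorem (Theorem \ref{HiltonMilnor}) to express the degree $\ell$ map as a diagonal followed by a sum in a wedge, and observe that in the stable range this recovers multiplication by $\ell$.

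First, I would factor the degree $\ell$ map as $S^n \xrightarrow{\psi_\ell} \bigvee_{k=1}^\ell S^n \xrightarrow{\nabla} S^n$, where $\psi_\ell$ is the iterated pinch map arising from the co-$H$-space structure on $S^n = \Sigma S^{n-1}$, and $\nabla$ is the fold. This factorisation expresses the degree $\ell$ map as the $\ell$-fold sum of $\iota_n$ in $\pi_n(S^n)$ taken with respect to the co-$H$-space structure.

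Next, I would apply Theorem \ref{HiltonMilnor} to $\bigvee_\ell S^n \simeq \bigvee_\ell \Sigma S^{n-1}$. The factors of the resulting product decomposition are $\Omega S^{k(n-1)+1}$, one for each basic product of weight $k \geq 1$. The $\ell$ weight-one factors correspond, via the equivalence, to the loops on the inclusions $i_k: S^n \to \bigvee_\ell S^n$. For $k \geq 2$, $\pi_j(S^{k(n-1)+1}) = 0$ whenever $j \leq 2n-2$, since $k(n-1)+1 \geq 2n-1 > j$. Hence in this range $\pi_j(\bigvee_{k=1}^\ell S^n) \cong \bigoplus_{k=1}^\ell \pi_j(S^n)$, with summands the images of the inclusions.

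Finally, the fold satisfies $\nabla \circ i_k \simeq \textrm{id}_{S^n}$, so under this direct-sum decomposition $\nabla_*$ realises the sum $(f_1, \ldots, f_\ell) \mapsto \sum_k f_k$. The iterated pinch satisfies $r_k \circ \psi_\ell \simeq \textrm{id}_{S^n}$ for each collapse map $r_k: \bigvee_\ell S^n \to S^n$ (by induction from the counit axiom for a co-$H$-space), so $(\psi_\ell)_*$ is the diagonal $f \mapsto (f, \ldots, f)$. The composite therefore sends $f \in \pi_j(S^n)$ to $\ell f$, as required. The main subtlety is the identification of the weight-one Hilton-Milnor factors with the inclusions $i_k$; I expect this to follow by tracing through the construction of the Hilton-Milnor equivalence, so that $\psi_\ell$ genuinely lands diagonally rather than picking up contributions from the higher-weight Whitehead-bracket summands, which in any case vanish on $\pi_j$ for $j \leq 2n-2$.
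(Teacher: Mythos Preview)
Your proposal is correct and follows essentially the same approach as the paper: both factor the degree $\ell$ map as pinch-then-fold, invoke Hilton--Milnor on $\bigvee_\ell S^n$, and use the connectivity of the higher-weight factors to reduce to the weight-one summands, where the composite becomes the diagonal followed by the sum. The only cosmetic difference is that the paper passes explicitly to the loop space via adjunction and phrases the final step in terms of the $H$-group multiplication on $\Omega S^{m+1}$ (using cellular approximation rather than the homotopy-group vanishing directly), whereas you work with $\pi_j$ of the wedge itself; the underlying argument is the same.
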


\begin{proof} Write $n=m+1$ and $j=i+1$. By the adjoint isomorphism, it suffices to show that $\Omega \ell$ induces multiplication by $\ell$ on $\pi_{i}(\Omega S^{m+1})$ for $i < 2m$. The map $\ell$ is the composition $$ S^{m+1} \xrightarrow{c} \bigvee_{i=1}^\ell S^{m+1} \xrightarrow{\nabla} S^{m+1}$$ of the $\ell$-fold suspension comultiplication $c$ on $S^{m+1}$ with the fold map $\nabla$. Let $\mathscr{L}$ be the free Lie algebra on $\ell$ generators, as in Subsection \ref{WittSubs}. The Hilton-Milnor Theorem (Theorem \ref{HiltonMilnor}) gives a decomposition $$\Omega \bigvee_{i=1}^\ell S^{m+1} \simeq \Omega \prod_{B \in \mathscr{L}} S^{km+1},$$ where $k$ is the weight of $B \in \mathscr{L}$, so in particular is implicitly a function of $B$.

Let $f \in \pi_i(\Omega S^{m+1})$. Applying the above decomposition to $(\Omega \ell)_* (f) = (\Omega \ell) \circ f$ gives factorizations $\varphi$ and $\theta$ as in the following diagram.

\begin{center}
\begin{tabular}{c}
\xymatrix{
\Omega S^{m+1} \ar^{\Omega c}[r] & \Omega \bigvee_{i=1}^\ell S^{m+1} \ar^{\Omega \nabla}[r] & \Omega S^{m+1} \\
S^i \ar^{\varphi}[r] \ar^{f}[u] & \Omega \prod_{B \in \mathscr{L}} S^{km+1}. \ar^{\simeq}[u] \ar^{\theta}[ur] & 
}
\end{tabular}
\end{center}

We must show that $\theta \circ \varphi \simeq \ell f$. Since $i < 2m$, cellular approximation tells us that $\varphi$ factors through the sub-product $\Omega \prod_{i=1}^\ell S^{m+1}$ consisting of those terms where $k=1$. Hilton \cite{Hilton} tells us that the restriction of the Hilton-Milnor map to these summands is given by the product under the loop multiplication of the looped wedge factor inclusions $\Omega S^{m+1} \longrightarrow \Omega \bigvee_{i=1}^\ell S^{m+1}$. Thus, the restriction of $\theta$ to these summands is the $\ell$-fold loop multiplication map $$m : \Omega \prod_{i=1}^\ell S^{m+1} \longrightarrow \Omega S^{m+1}.$$ Furthermore, this restriction of the Hilton-Milnor map is a left homotopy inverse to the looped inclusion $\Omega \iota: \Omega \bigvee_{i=1}^\ell S^{m+1} \longrightarrow \Omega \prod_{i=1}^\ell S^{m+1}$ of the wedge into the product, so $\theta \circ \varphi$ is homotopic to $\theta \circ \Omega \iota \circ \Omega c \circ f$.

To finish, we note that by the axiomatic definition of a comultiplication \cite{Arkowitz} we have that $\Omega \iota \circ \Omega c = \Delta$, the diagonal map into the $\ell$-fold product, and the composition $m \circ \Delta$ is by definition the map inducing multiplication by $\ell$ in the group structure on $[S^i, \Omega S^{m+1}] = \pi_i(\Omega S^{m+1})$ coming from the fact that $\Omega S^{m+1}$ is an $H$-group. But this group structure coincides with that of the homotopy group \cite{Arkowitz}, and so we are done. \end{proof}

Let $\pi^S_j$ denote the $j$-th stable homotopy group of spheres. Work of Adams on the $J$-homomorphism implies that any cyclic group of prime power order occurs as a summand in some $\pi^S_j$:

\begin{lemma}{\cite[Lemma 3.4]{Me}} \label{sphereClasses} For any prime $p$ and any $s \in \mathbb{N}$, there exists $j$ such that $\Zps$ is a direct summand in $\pi^{S}_j$. That is, for a fixed choice of such a $j$, $\Zps$ is a direct summand in $\pi_{n+j}(S^n)$ for all $n \geq j+2$.\qed \end{lemma}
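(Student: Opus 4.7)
The plan is to extract such summands from the image of the $J$-homomorphism $J : \pi_i(SO) \longrightarrow \pi^S_i$, exploiting the fact that Adams \cite{AdamsII, AdamsIV} proved that in degrees $i = 4k-1$, $\mathrm{Im}(J)$ is a \emph{direct summand} of $\pi^S_i$. Thus once we locate a $\Zps$ inside $\mathrm{Im}(J)$ and split off its $p$-primary part, we automatically obtain a $\Zps$ summand of $\pi^S_j$ itself.

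First I would produce the requisite $p$-primary divisibility. Adams identified the order of $\mathrm{Im}(J)$ in degree $4k-1$ with (essentially) the denominator of $B_{2k}/4k$, where $B_{2k}$ is a Bernoulli number. For $p$ odd, the von Staudt--Clausen theorem (in its refined form for $B_{2k}/2k$) gives that when $(p-1) \mid 2k$ one has $v_p(B_{2k}/2k) = -(v_p(2k)+1)$; choosing $k$ a multiple of $(p-1)/2$ with $v_p(k) = s-1$ therefore forces $p^s$ to divide the denominator, hence to divide $|\mathrm{Im}(J)|$ in degree $j = 4k-1$. For $p = 2$, the analogous $2$-primary estimate in Adams IV (together with the $\mu$-classes in dimensions $8k+1,\, 8k+2$ for small $s$) yields the same conclusion.

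Once $\mathrm{Im}(J)$ is known to be a cyclic summand of $\pi^S_j$ of order divisible by $p^s$, the structure theorem for finite abelian groups lets us split off a further $\Zps$ summand from the $p$-primary part, giving the first assertion. For the second assertion I would invoke the Freudenthal suspension theorem: the suspension map $\pi_{n+j}(S^n) \longrightarrow \pi^S_j$ is an isomorphism as soon as $n+j < 2n-1$, i.e.\ as soon as $n \geq j+2$, so the $\Zps$ summand of $\pi^S_j$ lifts to a $\Zps$ summand of $\pi_{n+j}(S^n)$ in this range.

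The conceptual content of the argument is entirely due to Adams; the main obstacle is the number-theoretic step of guaranteeing that $p^s$ divides the relevant Bernoulli denominator, which is essentially a book-keeping exercise once von Staudt--Clausen is in hand. Everything else — the splitting of $\mathrm{Im}(J)$ off $\pi^S_*$ and the passage to unstable homotopy via Freudenthal — is standard.
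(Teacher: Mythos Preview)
The paper does not prove this lemma: it is cited from the author's earlier work with a \qed, and the introduction indicates that it follows from Adams' work on the $J$-homomorphism. Your approach is exactly the intended one, and the outline is correct in spirit.

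There is, however, one genuine slip in your argument. You write that once $\mathrm{Im}(J)$ is a cyclic summand of $\pi^S_j$ of order \emph{divisible} by $p^s$, the structure theorem lets you split off a $\Zps$-summand. That is false as stated: a cyclic group whose $p$-part is $\mathbb{Z}/p^t$ with $t>s$ does not contain $\Zps$ as a direct summand. What you actually need is that the $p$-adic valuation of $|\mathrm{Im}(J)|$ is \emph{exactly} $s$. Fortunately your own von Staudt--Clausen computation already gives this: the formula $v_p(B_{2k}/2k)=-(v_p(2k)+1)$ is an equality, so choosing $k$ with $(p-1)\mid 2k$ and $v_p(k)=s-1$ forces the $p$-part of $\mathrm{Im}(J)$ in degree $4k-1$ to be precisely $\Zps$. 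You should rephrase the conclusion accordingly. The analogous care is needed at $p=2$, where Adams' formula for the $2$-part of $\mathrm{Im}(J)$ in degree $4k-1$ again gives an exact valuation depending on $v_2(k)$, so one can hit each exponent on the nose. With that correction, your sketch is sound and matches the approach the paper attributes to the cited reference.
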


These summands can be transplanted to $P^n(p^r)$ as in the next two corollaries.

\begin{corollary} \label{mooreClasses} Let $p$ be prime, and let $r \geq s \in \mathbb{N}$. If $p^s \neq 2$, then there exists $j$ such that $\Zps$ is a direct summand in $\pi_{n+j}(P^n(p^r))$ for all $n > j+3$. \end{corollary}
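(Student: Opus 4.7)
My plan is to combine Lemma \ref{sphereClasses} with the comparison map $P^n(p^s) \to P^n(p^r)$ coming from the compatibility of the defining cofibrations, using Corollary \ref{stableMooreAnnihilation} to keep tight control of the order of the transferred class. By Lemma \ref{sphereClasses} I would pick $j$ so that $\pi^S_j$ contains $\Zps$ as a direct summand, with $\alpha \in \pi^S_j$ a generator and $\phi \colon \pi^S_j \twoheadrightarrow \Zps$ the associated retraction. For $t \in \{s,r\}$, applying $\pi_{n+j}(-)$ to the Puppe sequence of $S^{n-1} \xrightarrow{p^t} S^{n-1} \to P^n(p^t) \xrightarrow{q^t} S^n \to \cdots$ and invoking Lemma \ref{stableHM} (applicable in the stable range $n > j+3$) to identify each degree-$p^t$ map with multiplication by $p^t$, I obtain a short exact sequence
$$0 \longrightarrow \pi^S_{j+1}/p^t \longrightarrow \pi_{n+j}(P^n(p^t)) \xrightarrow{q^t_*} \pi^S_j[p^t] \longrightarrow 0,$$
where $[p^t]$ denotes the $p^t$-torsion subgroup.

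Next, since $s \leq r$ the commuting square
\begin{center}
\begin{tabular}{c}
\xymatrix{
S^{n-1} \ar^{p^s}[r] \ar@{=}[d] & S^{n-1} \ar^{p^{r-s}}[d] \\
S^{n-1} \ar^{p^r}[r] & S^{n-1}
}
\end{tabular}
\end{center}
induces a map $\varphi \colon P^n(p^s) \to P^n(p^r)$, and naturality of the Puppe sequence applied one step further (where the right vertical of the corresponding square is the identity) shows that $\varphi_*$ produces a morphism of the two SESs above whose rightmost vertical is simply the inclusion $\pi^S_j[p^s] \hookrightarrow \pi^S_j[p^r]$. I lift $\alpha \in \pi^S_j[p^s]$ to a class $\tilde\alpha \in \pi_{n+j}(P^n(p^s))$ via $q^s_*$; by Corollary \ref{stableMooreAnnihilation}, which applies in our range because $p^s \neq 2$, the group $\pi_{n+j}(P^n(p^s))$ is annihilated by $p^s$, so $\tilde\alpha$ has order exactly $p^s$.

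I then set $\gamma := \varphi_*(\tilde\alpha) \in \pi_{n+j}(P^n(p^r))$. The order of $\gamma$ divides that of $\tilde\alpha$, hence divides $p^s$; on the other hand $q^r_*(\gamma) = \alpha$ has order $p^s$, so $\gamma$ has order exactly $p^s$. The composite $\phi \circ q^r_* \colon \pi_{n+j}(P^n(p^r)) \to \Zps$ sends $\gamma$ to a generator of $\Zps$, so $\langle \gamma \rangle \cong \Zps$ is split off by this retraction, exhibiting the required direct summand.

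The main obstacle I anticipate is ensuring that the transferred class has order exactly $p^s$ rather than some larger $p^a$ with $s < a \leq r$, which is what one would typically get from lifting $\alpha$ directly into $\pi_{n+j}(P^n(p^r))$ (whose only a priori exponent bound is $p^r$). Routing the construction through $P^n(p^s)$ forces the intermediate lift to have the optimal exponent $p^s$, at the cost of the hypothesis $p^s \neq 2$ required to invoke Corollary \ref{stableMooreAnnihilation} at that smaller Moore space.
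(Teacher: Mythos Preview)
Your proposal is correct and follows essentially the same route as the paper's proof: both arguments choose $j$ via Lemma \ref{sphereClasses}, lift the sphere class first into $\pi_{n+j}(P^n(p^s))$ using the Puppe/cofibre sequence and Lemma \ref{stableHM}, push it forward along the comparison map $\varphi\colon P^n(p^s)\to P^n(p^r)$, and invoke Corollary \ref{stableMooreAnnihilation} at the smaller Moore space to pin the order at $p^s$. Your packaging of the cofibre information into the short exact sequence $0\to\pi^S_{j+1}/p^t\to\pi_{n+j}(P^n(p^t))\to\pi^S_j[p^t]\to 0$ and your explicit retraction $\phi\circ q^r_*$ are slightly cleaner than the paper's treatment, but the underlying strategy is identical.
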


 Zhu and Pan \cite{ZhuPan} have already proven the case $s=1$, and Huang and Wu \cite{HuangWu} have already proven the case $s=r$.

\begin{proof} The cofibration $P^n(p^r) \longrightarrow S^n \xrightarrow{p^r} S^n$ gives a truncated long exact sequence on homotopy groups \cite{HiltonBook}: $$ \pi_{2n-3}(P^n(p^r)) \longrightarrow \pi_{2n-3}(S^n) \longrightarrow \pi_{2n-3}(S^n) \longrightarrow \pi_{2n-4}(P^n(p^r)) \longrightarrow \dots $$ $$ \dots \longrightarrow \pi_{n}(P^n(p^r)) \longrightarrow \pi_{n}(S^n) \longrightarrow \pi_{n}(S^n) \longrightarrow \pi_{n-1}(P^n(p^r)) \longrightarrow 0.$$

By Lemma \ref{sphereClasses}, there exists $j$ such that $\Zps$ is a direct summand in $\pi_{n+j}(S^n)$ for all $n \geq j+2$. Fix $n \geq j+4$, and let $f : S^{n+j} \longrightarrow S^n$ generate a $\Zps$-summand. By Lemma \ref{stableHM}, since we are in the stable range, the composite $p^s \circ f$ is homotopic to $p^s f$, and by assumption $f$ has order $p^s$. Thus, since $n \geq j+3$, the exact sequence applies, and taking $r=s$ we obtain a lift $\widetilde{f} \in \pi_{n+j}(P^n(p^s))$ making the following diagram commute.

\begin{center}
\begin{tabular}{c}
\xymatrix{
P^n(p^s) \ar[r] & S^{n} \ar[r]^{p^s} & S^{n}\\
& S^{n + j}. \ar[u]_{f} \ar[ur]_{\simeq *} \ar@{.>}[ul]^{\widetilde{f}}
}
\end{tabular}
\end{center}

We also have, for each $r \geq s$, a diagram \begin{center}
\begin{tabular}{c}
\xymatrix{
S^{n-1} \ar[r]^{p^r} & S^{n-1} \\
S^{n-1} \ar@{=}[u] \ar[r]^{p^s} & S^{n-1}. \ar[u]_{p^{r-s}}
}
\end{tabular}
\end{center}

Extending the rows of this diagram to cofibre sequences and combining with the previous one gives a diagram \begin{center}
\begin{tabular}{c}
\xymatrix{
S^{n-1} \ar[r] & P^n(p^r) \ar^{\underline{\rho}}[r] & S^{n} \ar[r]^{p^r} & S^{n} \\
S^{n-1} \ar[u]_{p^{r-s}} \ar[r] & P^n(p^s) \ar^{\underline{\rho}}[r] \ar[u]_{\varphi} & S^{n} \ar@{=}[u] \ar[r]^{p^s} & S^{n} \ar[u]_{p^{r-s}} \\
& & S^{n+j}, \ar[u]_{f} \ar[ur]_{\simeq *} \ar@{.>}[ul]^{\tilde{f}} &
}
\end{tabular}
\end{center}

We have that $\underline{\rho}_* (\varphi \circ \widetilde{f}) = f$, so the image of $\underline{\rho}_* : \pi_{n+j}(P^n(p^r)) \longrightarrow \pi_{n+j}(S^n)$ contains $f$. Since $f$ generates a $\Zps$-summand, this gives a surjection $\pi_{n+j}(P^n(p^r)) \longrightarrow \Zps$, and it suffices to argue that this surjection is split. From the diagram, it further suffices to do so in the case $r=s$.

By Corollary \ref{stableMooreAnnihilation}, since $n \geq j+4$ we have $p^s \pi_{n+j}(P^n(p^s)) = 0$. This means that the above surjection $\pi_{n+j}(P^n(p^s)) \longrightarrow \Zps$ is a map of $\Zps$-modules with free codomain, so is split, as required. \end{proof}

\begin{corollary} \label{mooreClasses2} Let $r \in \mathbb{N}$. For $n \geq 32$, the group $\pi_{n+28}(P^n(2^r))$ is isomorphic to $\mathbb{Z}/2$. \end{corollary}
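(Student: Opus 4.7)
The plan is to mirror the argument used to prove Corollary \ref{mooreClasses}: apply $\pi_{n+28}(-)$ to the defining cofibre sequence of $P^n(2^r)$ and use Lemma \ref{stableHM} to replace the induced degree-$2^r$ maps on sphere homotopy groups with honest multiplication by $2^r$ in the stable range, then read off the answer from the known $2$-primary stable stems.

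Concretely, the Puppe sequence of $S^{n-1} \xrightarrow{2^r} S^{n-1} \to P^n(2^r) \to S^n \xrightarrow{2^r} S^n$ gives the four-term exact sequence
$$\pi_{n+28}(S^{n-1}) \xrightarrow{(2^r)_\ast} \pi_{n+28}(S^{n-1}) \longrightarrow \pi_{n+28}(P^n(2^r)) \longrightarrow \pi_{n+28}(S^n) \xrightarrow{(2^r)_\ast} \pi_{n+28}(S^n).$$
The hypothesis $n \geq 32$ ensures (via Freudenthal) that $\pi_{n+28}(S^{n-1}) \cong \pi^S_{29}$ and $\pi_{n+28}(S^n) \cong \pi^S_{28}$, and is precisely the bound needed for both outer maps to lie in the stable range $j \leq 2n'-2$ of Lemma \ref{stableHM}; both therefore act as multiplication by $2^r$. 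Extracting the cokernel on the left and the kernel on the right, the exact sequence collapses to
$$0 \longrightarrow \pi^S_{29}/2^r\pi^S_{29} \longrightarrow \pi_{n+28}(P^n(2^r)) \longrightarrow \pi^S_{28}[2^r] \longrightarrow 0,$$
where $A[2^r]$ denotes the $2^r$-torsion subgroup.

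Next, Proposition \ref{CRT} implies that $\pi_\ast(P^n(2^r))$ is a $2$-local abelian group, so only the $2$-primary components of the stable stems above contribute (the odd-primary parts are divisible by $2^r$, so they die in the quotient on the left and contain no $2^r$-torsion on the right). The classical computations of Toda give that the $2$-component of $\pi^S_{28}$ is $\mathbb{Z}/2$, while the $2$-component of $\pi^S_{29}$ is trivial. Hence $\pi^S_{29}/2^r\pi^S_{29} = 0$ and $\pi^S_{28}[2^r] = \mathbb{Z}/2$, and the sequence collapses to $\pi_{n+28}(P^n(2^r)) \cong \mathbb{Z}/2$, independently of $r$.

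The main obstacle here is conceptual rather than technical: all the homotopy-theoretic machinery is a direct repetition of the proof of Corollary \ref{mooreClasses}, and the substantive input beyond that is the specific pair of calculations $\pi^S_{28}{}_{(2)} \cong \mathbb{Z}/2$ and $\pi^S_{29}{}_{(2)} = 0$. The crucial feature is the vanishing of the $29$-stem at $2$, which is what makes the conclusion independent of $r$; any $r$-dependence in the result would be located in that term.
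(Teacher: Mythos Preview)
Your argument is correct and is essentially the paper's own proof, spelled out in more detail: the paper uses the same truncated long exact sequence (writing the left-hand term as $\pi_{n+29}(S^n)$ rather than the Freudenthal-equivalent $\pi_{n+28}(S^{n-1})$) and the same pair of stable-stem computations, citing Mahowald--Tangora rather than Toda for $(\pi^S_{28})_{(2)} \cong \mathbb{Z}/2$ and $(\pi^S_{29})_{(2)} = 0$.
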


This result has already been shown by Zhu and Pan \cite{ZhuPan}, but it is easy to give the more explicit argument below.

\begin{proof} We will take a similar approach to Corollary \ref{mooreClasses}. The argument differs slightly because Corollary \ref{stableMooreAnnihilation} fails when $p^s=2$; we compensate for this using knowledge of the $2$-components of the stable homotopy groups of spheres. Specifically, from \cite[Theorem 1.1.1 and Table 1.1.8]{MahowaldTangora}, we know that the $2$-localization of $\pi^S_{28}$ is isomorphic to $\mathbb{Z}/2$, while the $2$-localization of $\pi^S_{29}$ is trivial.

Let $n \geq 32$. As in the proof of Corollary \ref{mooreClasses}, consider the cofibration $P^n(2^r) \longrightarrow S^n \xrightarrow{2^r} S^n$. The truncated long exact sequence on homotopy groups contains the segment $$\pi_{n+29}(S^n) \longrightarrow \pi_{n+28}(P^n(2^r)) \longrightarrow \pi_{n+28}(S^n).$$ It follows that the $2$-localization of $\pi_{n+28}(P^n(2^r))$ is isomorphic to $\mathbb{Z}/2$. \end{proof}

\section{Proof of Theorem \ref{MooreHyp}} \label{Proof1Section}

In this section, we will prove Theorem \ref{MooreHyp}. In Section \ref{MooreDecompSection}, we reduced the problem to showing $\Zps$-hyperbolicity of the wedge $P^n(p^r) \vee P^m(p^r)$. By the Hilton-Milnor Theorem (Theorem \ref{HiltonMilnor}) and Corollary \ref{binomialNeisendorfer}, we will see that each of the stable classes identified in Section \ref{CurlySection} will give exponentially many summands in the homotopy groups of $P^n(p^r) \vee P^m(p^r)$, which will suffice.

\begin{proof}[Proof of Theorem \ref{MooreHyp}] By Corollary \ref{hasAWedge}, it suffices to prove that if $n,m \geq 2$ then $\Omega(P^{n+1}(p^r) \vee P^{m+1}(p^r))$ is $\Zps$-hyperbolic for all $s \leq r$. Let $\mathscr{L}$ be the free ungraded Lie algebra over $\mathbb{Z}$ on two generators. The Hilton-Milnor theorem (Theorem \ref{HiltonMilnor}) gives $$\Omega (P^{n+1}(p^r) \vee P^{m+1}(p^r)) \simeq \Omega \Sigma (P^{n}(p^r) \vee P^{m}(p^r)) \simeq \prod_{B \in \mathscr{L}} \Omega \Sigma P^{n}(p^r)^{\wedge k_1} \wedge P^{m}(p^r)^{\wedge k_2},$$ where we have written $k_i = k_i(B)$, leaving the fact that $k_i$ is a function of $B$ implicit. Applying Lemma \ref{binomialNeisendorfer} factor-wise, this last is homotopy equivalent to $$\Omega \prod_{B \in \mathscr{L}} \Sigma \bigvee_{i=0}^{k-1}(P^{k_1 n +k_2 m -i}(p^r))^{\vee \binom{k-1}{i}} \simeq \Omega \prod_{B \in \mathscr{L}} \bigvee_{i=0}^{k-1}(P^{k_1 n +k_2 m +1 -i}(p^r))^{\vee \binom{k-1}{i}},$$ where $k=k_1+k_2$ is also implicitly a function of $B$.

By Corollaries \ref{mooreClasses} and \ref{mooreClasses2}, let $j$ be such that $\pi_{N+j}(P^N (p^r))$ contains a $\Zps$-summand for all $N > j+ 3$. For each $B \in \mathscr{L}$, the associated factor of the above decomposition contains $2^{k-1}$ Moore spaces. Supposing without loss of generality that $n \leq m$, the dimensions of these Moore spaces are at least $k(n-1)+2$. Thus, for $k > \frac{j+1}{n-1}$, the homotopy groups of each factor $$\bigvee_{i=0}^{k-1}(P^{k_1 n +k_2 m +1 -i}(p^r))^{\vee \binom{k-1}{i}}$$ contain $2^{k-1}$ summands isomorphic to $\Zps$ in dimensions at most $km+1+j$.

The number of factors for which the weight of $B$ is $k$ is equal to $W_2(k)$ (Theorem \ref{ungradedWitt}), so we may conclude that $$\bigoplus_{i=1}^{km+1+j} \pi_i(P^{n+1}(p^r) \vee P^{m+1}(p^r))$$ contains at least $2^{k-1} W_2(k)$ summands isomorphic to $\Zps$. The sequence $2^{k-1} W_2(k)$ certainly grows exponentially in $k$ (in fact, by Lemma \ref{WittAsymptotics}, it grows like $\frac{1}{2k} 4^k$) and this completes the proof. \end{proof}

\section{Modules over $\Zps$} \label{ModuleSection}

The purpose of this section is to prove various elementary facts about modules over $\Zps$ which we will use later. These facts are mostly intuitively clear, so we recommend that the reader skip this section on first reading, referring back only as necessary.

\subsection{Injections}

The main point of this subsection is to develop the `linear algebra' to prove Lemma \ref{summand}, which says that injections from free $\Zps$-modules are split, and that therefore the `dimension' of the codomain must be at least the `dimension' of the domain.

Let $p$ be prime and let $s \in \mathbb{N}$. Let $M$ be a finitely generated module over $\Zps$. By the structure theorem for finitely generated $\mathbb{Z}$-modules (for example as in \cite[Theorem 7.5]{Lang}) $M$ decomposes as a direct sum $$M \cong \bigoplus_{i=1}^n \mathbb{Z}/p^{s_i},$$ where each $s_i$ satisfies $1 \leq s_i \leq s$. Further, if we order the summands so that $s_{i+1} \geq s_i$, then the sequence $(s_i \ \lvert \ 1 \leq i \leq n)$ is uniquely determined. In particular, if we fix $t \in \mathbb{N}$, then the number of values of $i$ for which $s_i = t$ is uniquely determined. This number is then precisely the \emph{$\Zpt$-dimension} $\dim_{\Zpt}(M)$ of Definition \ref{defdef}. We will often use without comment the fact that a $\Zps$-module is equivalently a $\mathbb{Z}$-module $M$ satisfying $p^sM = 0$.

We will wish to mimic the approach of ordinary linear algebra as far as possible. We will wish to be able to `change basis', and to do so we need a notion of basis, which must generalize the idea of a free basis in that our elements may have variable order.

\begin{definition} Let $M$ be a $\Zps$-module. A \emph{basis} of $M$ is a list $$((e_i,s_i) \in M \times \mathbb{N} \ \lvert \ 1 \leq i \leq n ),$$ such that the following conditions are satisfied:
\begin{itemize}
    \item Each $x \in M$ is expressible as $x = \sum_{i=1}^n \lambda_i e_i$ for $\lambda_i \in \Zps$ (\emph{spanning}).
    \item $\sum_{i=1}^n \lambda_i e_i = 0$ if and only if $p^{s_i} \lvert \lambda_i$ for each $i$ (\emph{linear independence}).
\end{itemize}
\end{definition}

\begin{lemma} Any finitely generated $\Zps$-module has a basis. Conversely, if $((e_i,s_i) \ \lvert \ 1 \leq i \leq n )$ is a basis of $M$, then the map $$\bigoplus_{i=1}^n \mathbb{Z}/p^{s_i} \longrightarrow M$$ defined by sending the generator of the $i$-th summand to $e_i$ is an isomorphism.
\end{lemma}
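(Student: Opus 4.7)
The plan is to handle the two halves of the lemma separately, each as an essentially mechanical unpacking of the basis axioms against the structure theorem.

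For existence of a basis, I would apply the structure theorem cited in the preceding paragraph to write $M \cong \bigoplus_{i=1}^n \mathbb{Z}/p^{s_i}$ with $1 \leq s_i \leq s$. Taking $e_i \in M$ to be the image of the standard generator of the $i$-th summand, I claim $((e_i, s_i))$ is a basis. Spanning is immediate, since every tuple in the direct sum is a $\Zps$-linear combination of the standard generators (noting that each $\mathbb{Z}/p^{s_i}$ is naturally a $\Zps$-module via reduction). For linear independence, observe that under the identification with $\bigoplus \mathbb{Z}/p^{s_i}$, the element $\sum_i \lambda_i e_i$ is the tuple whose $i$-th entry is $\lambda_i$ read modulo $p^{s_i}$; this tuple is zero exactly when $p^{s_i} \mid \lambda_i$ for every $i$.

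For the converse, given a basis $((e_i, s_i))$, I define $\phi : \bigoplus_{i=1}^n \mathbb{Z}/p^{s_i} \to M$ by sending the generator of the $i$-th summand to $e_i$. Well-definedness requires that $p^{s_i} e_i = 0$ for each $i$; this follows by applying the ``if'' direction of linear independence to the combination with coefficients $\lambda_i = p^{s_i}$ and $\lambda_j = 0$ for $j \neq i$, each of which is divisible by the corresponding $p^{s_j}$, forcing $p^{s_i} e_i = 0$. Surjectivity of $\phi$ is the spanning axiom restated, and injectivity is the ``only if'' direction of linear independence: if $(\lambda_1, \ldots, \lambda_n)$ lies in the kernel then $\sum_i \lambda_i e_i = 0$, so $p^{s_i} \mid \lambda_i$, i.e.\ each $\lambda_i$ vanishes in $\mathbb{Z}/p^{s_i}$.

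I do not anticipate a serious obstacle here: the only step requiring any thought is the well-definedness of $\phi$, where one has to apply linear independence in the slightly roundabout direction of deducing a single relation $p^{s_i} e_i = 0$ from a joint statement about tuples of coefficients. Everything else is a direct translation between the basis axioms and the defining universal property of the direct sum.
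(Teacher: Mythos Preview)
Your proposal is correct and follows essentially the same approach as the paper's proof: use the structure theorem to exhibit a basis, then for the converse verify well-definedness of $\phi$ via the ``if'' direction of linear independence, surjectivity via spanning, and injectivity via the ``only if'' direction. The only difference is that you spell out the well-definedness step in more detail than the paper does.
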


\begin{proof} To see that $M$ has a basis write $M \cong \bigoplus_{i=1}^n \mathbb{Z}/p^{t_i}$, taking $e_i$ to be a generator of the $i$-th summand, and taking $s_i = t_i$. It follows immediately that this is a basis.

Conversely, let $\varphi: \bigoplus_{i=1}^n \mathbb{Z}/p^{s_i} \longrightarrow M$ be as in the theorem statement. By linear independence of the basis, $p^{s_i} e_i = 0$ for each $i$, so $\varphi$ is well-defined. Surjectivity of $\varphi$ follows immediately from the spanning condition, while injectivity follows immediately from linear independence. Thus, $\varphi$ is an isomorphism, as required. \end{proof}

\begin{lemma} \label{basisManeuvers} Let $((e_i,s_i) \ \lvert \ 1 \leq i \leq n )$ be a basis of $M$.
\begin{itemize}
    \item If $\lambda$ is a unit in $\Zps$, then replacing the basis element $(e_k,s_k)$ with $(\lambda e_k,s_k)$ again yields a basis.
    \item If $j \neq k$ and $s_j \leq s_k$, then replacing the basis element $(e_k,s_k)$ with $(e_k + \mu e_j, s_k)$ for any $\mu \in \Zps$ again yields a basis.
\end{itemize}
\end{lemma}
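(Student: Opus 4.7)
The plan is to verify directly the two basis axioms (spanning and linear independence) in each case. Both parts amount to elementary linear algebra adapted to the setting of modules over $\Zps$, so I would structure each bullet as a short, standalone check.

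For the first bullet, the key observation is that $\lambda$, being a unit in $\Zps$, admits an inverse $\lambda^{-1}$. To see spanning, given $x \in M$, expand $x = \sum a_i e_i$ in the original basis and rewrite $a_k e_k = (a_k \lambda^{-1})(\lambda e_k)$. For linear independence, suppose $\sum_{i \neq k} \mu_i e_i + \mu_k(\lambda e_k) = 0$. The original linear independence gives $p^{s_i} \mid \mu_i$ for $i \neq k$ and $p^{s_k} \mid \mu_k \lambda$; since $\lambda$ is a unit, the latter is equivalent to $p^{s_k} \mid \mu_k$.

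For the second bullet, I would first note that $p^{s_k}(e_k + \mu e_j) = p^{s_k} e_k + \mu p^{s_k - s_j} \cdot p^{s_j} e_j = 0$, using $s_j \leq s_k$, so the new list at least consists of elements killed by $p^{s_k}$. Spanning follows from the identity
\[
a_k e_k + a_j e_j = a_k(e_k + \mu e_j) + (a_j - a_k \mu) e_j,
\]
which rewrites any expansion in the old basis as one in the new. For linear independence, take $\sum_{i \neq k} \lambda_i e_i + \lambda_k(e_k + \mu e_j) = 0$; distributing and regrouping gives an expansion in the original basis, so $p^{s_i} \mid \lambda_i$ for $i \notin \{j,k\}$, $p^{s_k} \mid \lambda_k$, and $p^{s_j} \mid \lambda_j + \lambda_k \mu$. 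The inequality $s_j \leq s_k$ then implies $p^{s_j} \mid \lambda_k$, hence $p^{s_j} \mid \lambda_k \mu$, and finally $p^{s_j} \mid \lambda_j$.

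I do not anticipate a real obstacle: both claims are direct consequences of the definition, the arithmetic of divisibility in $\Zps$, and the invertibility/inequality hypotheses. The only point requiring minor care is the twofold use of $s_j \leq s_k$ in the second bullet, once to ensure that $e_k + \mu e_j$ is annihilated by $p^{s_k}$ and once to propagate divisibility from $\lambda_k$ to $\lambda_k \mu$ modulo $p^{s_j}$.
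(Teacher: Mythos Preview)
Your proposal is correct and follows essentially the same approach as the paper: the paper only writes out the linear-independence check for the second bullet (expanding in the old basis, obtaining $p^{s_k}\mid\lambda_k$ and $p^{s_j}\mid(\lambda_j+\mu\lambda_k)$, then using $s_j\leq s_k$ to conclude $p^{s_j}\mid\lambda_j$) and dismisses the remaining parts as similar, whereas you spell out all four checks. The arguments are identical in substance.
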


\begin{proof} We will show only that the basis obtained by the second replacement is linearly independent; the other parts are similar.

Write $(e_i', s_i)$ for the new basis, and suppose that $\sum_{i=1}^n \lambda_i e_i' = 0$. We must show that $p^{s_i}$ divides $\lambda_i$ for each $i$. Substituting in, we have $(\sum_{i \neq j,k} \lambda_i e_i) + \lambda_j e_j + \lambda_k (e_k + \mu e_j) = 0$. Since the original basis was linearly independent, we have that $p^{s_i} \lvert \lambda_i$ for $i \neq j$. In particular, $p^{s_k} \lvert \lambda_k$. We also have $p^{s_j} \lvert (\lambda_j + \mu \lambda_k)$. Since $s_j \leq s_k$ we have $p^{s_j} \lvert \lambda_k$, so $p^{s_j} \lvert \lambda_j$. Thus, $p^{s_i} \lvert \lambda_i$ for all $i$, and thus the $(e_i',s_i)$ form a basis, as required. \end{proof}

It is always true that a surjection onto a free module splits; over $\Zps$, it is additionally true that an injection from a free module splits.

\begin{lemma} \label{summand} Let $M$ and $N$ be finitely-generated $\Zps$-modules, with $M$ free. The image of any injection of $\Zps$-modules $\varphi : M \longrightarrow N$ is a summand, and $\dim_{\Zps}(N) \geq \dim_{\Zps}(M)$. \end{lemma}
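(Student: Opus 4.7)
The plan is to use the basis technology of Lemma \ref{basisManeuvers} to exhibit a basis of $N$ in which $\varphi(e_1), \ldots, \varphi(e_d)$ appear as basis elements of order $p^s$, where $e_1, \ldots, e_d$ is a free basis of $M$ and $d = \dim_{\Zps}(M)$. Such a basis immediately decomposes $N$ as $\varphi(M) \oplus N''$, where $N''$ is spanned by the remaining basis elements. This yields both that $\varphi(M)$ is a summand and, by counting basis elements of order $p^s$, that $\dim_{\Zps}(N) \geq d = \dim_{\Zps}(M)$.

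I would first prove a ``pivoting'' subclaim: given any basis $((f_j, t_j))$ of a finitely generated $\Zps$-module $N$ and any $z \in N$ of order exactly $p^s$, there is a sequence of basis maneuvers producing a new basis with $(z,s)$ in place of some $f_{j_0}$. Writing $z = \sum_j \lambda_j f_j$ and using $t_j \leq s$ throughout, the condition $p^{s-1}z \neq 0$ forces some $j_0$ with $t_{j_0} = s$ and $\lambda_{j_0}$ a unit. I would then rescale $f_{j_0}$ by $\lambda_{j_0}$ (first clause of Lemma \ref{basisManeuvers}) and iteratively fold in each $\lambda_j f_j$ for $j \neq j_0$ (second clause, legal since every $t_j \leq s = t_{j_0}$); the net effect is that $f_{j_0}$ has been replaced by $z$.

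With this subclaim in hand, I would induct on $k$, maintaining a basis of the form $(\varphi(e_1),s), \ldots, (\varphi(e_k),s), (f'_{k+1}, t'_{k+1}), \ldots, (f'_n, t'_n)$. At step $k+1$ I expand $\varphi(e_{k+1}) = \sum_{i \leq k} \mu_i \varphi(e_i) + \sum_{j > k} \mu_j f'_j$ and set $z = \varphi(e_{k+1} - \sum_{i \leq k} \mu_i e_i) = \sum_{j > k} \mu_j f'_j$. Applying the pivoting subclaim to the tail basis $(f'_j, t'_j)_{j>k}$ turns $z$ into a basis element, and a final round of Lemma \ref{basisManeuvers} adds back $\sum_{i \leq k} \mu_i \varphi(e_i)$ (legal since all orders involved are $s$) to convert $z$ into $\varphi(e_{k+1})$.

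The main obstacle, and the only step where the hypotheses genuinely enter, is verifying that $z$ has order $p^s$ so that the pivoting subclaim applies. This holds because $e_{k+1} - \sum_i \mu_i e_i$ has coefficient $1$ on $e_{k+1}$ in the free basis of $M$ and hence has order $p^s$ there, and injectivity of $\varphi$ transfers this order lower bound to $z$. Everything else is bookkeeping with Lemma \ref{basisManeuvers}.
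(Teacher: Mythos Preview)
Your proof is correct and follows essentially the same approach as the paper: both induct on the number of basis vectors of $M$ already placed as order-$p^s$ basis elements of $N$, using the observation that any element of order exactly $p^s$ must have a unit coefficient on some basis element of order $p^s$, together with repeated applications of Lemma~\ref{basisManeuvers}. The only organizational difference is that the paper clears the cross-terms $\sum_{i<j}\lambda_i e_i$ by changing basis in $M$ before pivoting in $N$, whereas you keep $M$'s basis fixed, pivot in $N$ on the tail $z$, and then fold the cross-terms back in via Lemma~\ref{basisManeuvers}; both routes are equivalent.
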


\begin{proof} Let $(x_1,t_1), \dots, (x_m,t_m)$ be a basis of $M$, and let $$(e_1,s_1), \dots , (e_n,s_n), (e'_1,s_1'), \dots (e'_{n'},s_{n'}')$$ be a basis of $N$, such that each $s_i=s$ and each $s'_i<s$.

Thus we have $f(x_1) = \sum_{i=1}^n \lambda_{i} e_i + \sum_{i=1}^{n'} \lambda_{i}' e_i'$ for some coefficients $\lambda_i$ and $\lambda_i'$. In particular, since $f(x_1)$ has order $p^s$, there must be some $\lambda_i$ which is not divisible by $p$.By repeated use of Lemma \ref{basisManeuvers} we may therefore change basis in $M$ by replacing $e_i$ by $\sum_{i=1}^n \lambda_{i} e_i + \sum_{i=1}^{n'} \lambda_{i}' e_i'$. After this change we have $f(x_1) = e_i$, and by renumbering we may assume that $i=1$.

We repeat this procedure inductively: at the $j$-th stage we have $f(x_i) = e_i$ for all $i < j$ and we wish to arrange that $f(x_j) = e_j$. We have that $f(x_j) = \sum_{i=1}^n \lambda_{i} e_i + \sum_{i=1}^{n'} \lambda_{i}' e_i'$ for some coefficients $\lambda_i$ and $\lambda_i'$, and the set $f(x_1), \dots , f(x_{j-1})$ spans the submodule $\langle e_1, \dots , e_{j-1} \rangle \subset M$. By changing basis according to Lemma \ref{basisManeuvers}, we may arrange that $\lambda_i = 0$ for $i<j$, and this does not change the fact that $f(x_i) = e_i$ for these values of $i$. Again, $f(x_j)$ has order $p^s$, so there must be $i \geq j$ with $\lambda_i$ not divisible by $p$, and by renumbering we may assume that $i=j$. By changing basis we may arrange that $f(x_j) = e_j$. This completes the inductive step, hence the proof that $\textrm{Im}(f)$ is a summand. Since after this procedure we have $f(x_i) = e_i$ for $i = 1, \dots , m$ we must have $n \geq m$, which is the other part of the theorem statement. \end{proof}

We also have the following technical lemma, which will be used in the proof of Proposition \ref{injection2}.

\begin{lemma} \label{factorTensor} Let $X$, $A$, $B$, and $Y$ be $\Zps$-modules, with $X$ free and $p^{s-1} B = 0$. Let $f: X \longrightarrow A \oplus B$ and $g: A \oplus B \longrightarrow Y$ be homomorphisms. Let $i_A$ be the inclusion of $A$ in $A \oplus B$, and let $\pi_A$ be the projection $A \oplus B \longrightarrow A$. If $g \circ f$ is injective, then the composite $g \circ i_A \circ \pi_A \circ f$ is also injective.
\end{lemma}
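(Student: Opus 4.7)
The plan is to decompose everything along the direct sum. Write $f_A = \pi_A \circ f$ and $g_A = g \circ i_A$, and denote by $f_B \colon X \to B$ and $g_B \colon B \to Y$ the evident analogues, so that
$$g \circ f = g_A f_A + g_B f_B,$$
while the composite we want to show is injective is precisely $g \circ i_A \circ \pi_A \circ f = g_A f_A$. Under this decomposition the hypothesis $p^{s-1}B = 0$ becomes $p^{s-1} g_B f_B = 0$, so the role of the $B$-component is to be a perturbation that is killed by $p^{s-1}$.

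I would argue by contradiction: assume $g_A f_A(x) = 0$ with $x \neq 0$. Using a basis of the free $\Zps$-module $X$, let $k$ be the largest integer with $x \in p^k X$; since $x \neq 0$ we have $0 \leq k \leq s-1$, and writing $x = p^k y$ we have $y \notin pX$, i.e.\ some basis coefficient of $y$ is a unit in $\Zps$. The key observation is that \emph{both} summands of $gf(y)$ are annihilated by $p^{s-1}$: the $B$-summand $g_B f_B(y)$ by hypothesis, and the $A$-summand $g_A f_A(y)$ because $p^k g_A f_A(y) = g_A f_A(x) = 0$ forces its order to divide $p^k \leq p^{s-1}$. Adding and pulling the scalar inside yields
$$gf(p^{s-1} y) = p^{s-1}\bigl(g_A f_A(y) + g_B f_B(y)\bigr) = 0.$$

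To finish, note that $p^{s-1}y \neq 0$ in $X$: the unit basis coefficient $\mu$ of $y$ satisfies $p^{s-1}\mu \neq 0$ in $\Zps$, so $p^{s-1}y$ is nonzero in the free module. This contradicts injectivity of $g \circ f$, forcing $x = 0$. The only real obstacle is the $p$-adic bookkeeping that synchronizes the exponent controlling $g_A f_A(y)$ with the annihilation exponent of $B$; once both are aligned at $p^{s-1}$, freeness of $X$ supplies the nonvanishing of $p^{s-1}y$ and the argument closes.
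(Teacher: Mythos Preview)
Your proof is correct and follows essentially the same idea as the paper's: kill the $B$-component by multiplying by $p^{s-1}$, then use freeness of $X$ to guarantee the relevant element is nonzero. The paper streamlines the bookkeeping by observing upfront that a map out of a free $\Zps$-module is injective if and only if its restriction to $p^{s-1}X$ is, and then noting that on $p^{s-1}X$ one has $f = i_A \pi_A f$ (since $p^{s-1}B=0$), so $g\circ i_A\pi_A f$ and $g\circ f$ literally agree there; your introduction of the exponent $k$ accomplishes the same reduction to the socle, just elementwise.
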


\begin{proof} Since $X$ is free, a map defined on $X$ is an injection if and only if its restriction to $p^{s-1}X$ is an injection. It therefore suffices to show that if $g \circ i_A \circ \pi_A \circ f (p^{s-1}x) = 0$ then $p^{s-1} x = 0$.

Thus, suppose that $g \circ i_A \circ \pi_A \circ f (p^{s-1}x) = 0$. Write $f(x) = a+b \in A \oplus B$, for $a \in A$ and $b \in B$. Then $f(p^{s-1}x) = p^{s-1}a$, since $p^{s-1}B=0$. In particular, $f(p^{s-1}x) = i_A \circ \pi_A \circ f(p^{s-1}x)$. Thus, $g \circ f (p^{s-1}x) = 0$, and $g \circ f$ is an injection, so $p^{s-1}x=0$, as required. \end{proof}

\subsection{Surjections}

The main result of this subsection is Lemma \ref{surjectOnHype}, which is the basic algebraic scaffolding for the proof of Theorem \ref{HPrelim}.

\begin{lemma} \label{sCase} Let $\varphi: M \longrightarrow N$ be a surjection of $\Zps$-modules. Then $$ \dim_{\Zps}(M) \geq \dim_{\Zps}(N).$$ \end{lemma}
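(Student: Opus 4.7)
The plan is to reduce the statement to a fact about $\mathbb{Z}/p$-vector spaces by identifying $\dim_{\Zps}$ with a suitable dimension of $p^{s-1}(-)$. Concretely, for any $\Zps$-module $N$, I claim
\[
\dim_{\Zps}(N) = \dim_{\mathbb{Z}/p}\bigl(p^{s-1}N\bigr),
\]
where the right-hand side treats $p^{s-1}N$ as a $\mathbb{Z}/p$-vector space (noting that $p \cdot p^{s-1}N = p^s N = 0$). This identification comes straight from the structure theorem: writing $N \cong \bigoplus_{i} \mathbb{Z}/p^{s_i}$ with $1 \leq s_i \leq s$, the submodule $p^{s-1}N$ is the direct sum of the subgroups $p^{s-1}\mathbb{Z}/p^{s_i}$, each of which vanishes when $s_i < s$ and equals $\mathbb{Z}/p$ when $s_i = s$. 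Thus $p^{s-1}N$ is a $\mathbb{Z}/p$-vector space of dimension exactly equal to the number of summands of maximal exponent, which is $\dim_{\Zps}(N)$ by definition.

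Next, I would observe that a surjection $\varphi : M \twoheadrightarrow N$ restricts to a surjection $p^{s-1}M \twoheadrightarrow p^{s-1}N$: any element of $p^{s-1}N$ has the form $p^{s-1}n$ for some $n \in N$, and choosing $m \in M$ with $\varphi(m) = n$ gives $\varphi(p^{s-1}m) = p^{s-1}n$. Both $p^{s-1}M$ and $p^{s-1}N$ are $\mathbb{Z}/p$-vector spaces, so by standard linear algebra
\[
\dim_{\mathbb{Z}/p}\bigl(p^{s-1}M\bigr) \geq \dim_{\mathbb{Z}/p}\bigl(p^{s-1}N\bigr).
\]
Combining this with the identification above gives $\dim_{\Zps}(M) \geq \dim_{\Zps}(N)$, as required; the argument goes through without any finiteness hypothesis, since the inequality also makes sense when both sides are $\infty$.

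There is no real obstacle here: the only thing to check is the identification of $\dim_{\Zps}(N)$ with $\dim_{\mathbb{Z}/p}(p^{s-1}N)$, which is immediate from the structure theorem already invoked earlier in the section. The rest is the linear-algebraic fact that surjections do not increase dimension of vector spaces.
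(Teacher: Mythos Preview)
Your argument is correct, but it takes a different route from the paper's. The paper decomposes $N = F \oplus C$ with $F$ free over $\Zps$ and $p^{s-1}C = 0$, then observes that the composite $M \twoheadrightarrow N \twoheadrightarrow F$ is a surjection onto a free module and therefore splits, exhibiting $F$ as a direct summand of $M$; the inequality follows. You instead apply the functor $p^{s-1}(-)$, which kills all summands of exponent below $s$ and converts the question into one about $\Zp$-vector spaces, where surjections cannot increase dimension. Your approach is slightly more elementary in that it avoids any discussion of splittings and appeals only to linear algebra over a field, while the paper's approach yields the stronger structural statement that the free part of $N$ actually embeds as a summand of $M$. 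One small caveat: your closing remark that the argument needs no finiteness hypothesis leans on a decomposition of arbitrary $\Zps$-modules into cyclics, which is Pr\"ufer's theorem rather than the finitely-generated structure theorem invoked earlier in the section; since the surrounding context is finitely generated anyway, this is harmless.
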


\begin{proof} Write $N = F \oplus C$, where $F$ is free over $\Zps$, and the complementary module $C$ satisfies $p^{s-1}C=0$. Let $\pi: N \longrightarrow F$ be the projection. The map $\pi \circ \varphi$ is a composite of surjections, hence a surjection, so is split by freeness of $F$. Thus, we have an isomorphism $M \cong F \oplus D$ for some complementary module $D$, so $$\dim_{\Zps}(M) \geq \dim_{\Zps}(F) = \dim_{\Zps}(N),$$ as required. \end{proof}

\begin{lemma} \label{downDog} Let $A$ be a submodule of a $\Zps$-module $N$, such that $A+pN=N$. Then $A=N$.
\end{lemma}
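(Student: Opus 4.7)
The plan is to iterate the hypothesis $A + pN = N$ and exploit the fact that multiplication by $p$ is nilpotent on $N$ (since $p^sN=0$). Specifically, I would prove by induction on $k$ that $A + p^kN = N$ for every $k \geq 0$, and then conclude by setting $k = s$.

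The base case $k = 0$ is trivial (and $k = 1$ is the hypothesis). For the inductive step, suppose $A + p^kN = N$. Multiplying the hypothesis $A + pN = N$ by $p^k$ gives $p^kA + p^{k+1}N = p^kN$, and since $p^kA \subseteq A$, this yields $p^kN \subseteq A + p^{k+1}N$. Substituting into the inductive hypothesis gives
\[
N = A + p^kN \subseteq A + A + p^{k+1}N = A + p^{k+1}N \subseteq N,
\]
so $N = A + p^{k+1}N$, as required. Taking $k = s$ gives $N = A + p^sN = A + 0 = A$, which is the claim.

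The argument is essentially Nakayama's lemma for the local ring $\Zps$ with maximal ideal $(p)$, specialized to the setting where nilpotence of $p$ on $N$ removes any need for finite generation hypotheses or a separate Nakayama-style invocation. There is no real obstacle: the whole proof is a one-line telescoping iteration, and I do not anticipate any subtlety beyond writing down the induction cleanly.
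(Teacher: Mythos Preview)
Your proof is correct and is essentially the same Nakayama-style iteration as the paper's: both exploit the nilpotence $p^sN=0$ and repeatedly substitute the hypothesis $A+pN=N$ into itself. The only cosmetic difference is that the paper runs the induction in the other direction, proving $A\supset p^kN$ for $k=s,s-1,\dots,0$, whereas you prove $A+p^kN=N$ for $k=0,1,\dots,s$; these are two ways of writing the same telescoping argument.
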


\begin{proof} Because $N$ is a $\Zps$-module, we have $p^s N = 0$, so certainly $A \supset p^s N$. We will now show that if $A \supset p^k N$ then $A \supset p^{k-1} N$. By induction, this implies that $A \supset p^0 N = N$, which suffices.

Assume that $A \supset p^k N$, and let $z \in N$. We have by assumption that $z = x + py$ for $x \in A$ and $y \in N$. Thus, $p^{k-1} z = p^{k-1} x + p^k y$. But now, $p^k y \in p^k N$, which by induction is a subset of $A$, so $p^{k-1} z \in A$, and since $z$ is an arbitrary element of $N$, this implies that $p^{k-1} N \subset A$. This completes the inductive step, hence the proof. \end{proof}

\begin{lemma} \label{realSurjection} Let $M,M',N$ be $\mathbb{Z}$-modules. Let $p$ be prime and let $s \leq r \in \mathbb{N}$. Suppose that $p^r  M = 0$, so $M$ may be regarded as a module over $\Zpr$, and that $p^s N = 0$. Let $\varphi: M \longrightarrow N$ be a surjection which admits a factorization \begin{center}
\begin{tabular}{c}
\xymatrix{
M' \ar^{\widetilde{\varphi}}[dr] & \\
M \ar^{\varphi}[r] \ar^{\iota}[u] & N.
}
\end{tabular}
\end{center}

Then $\sum_{t=s}^r \dim_{\Zpt}(M') \geq \dim_{\Zps}(N).$
\end{lemma}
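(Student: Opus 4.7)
The plan is to isolate a $\Zpr$-annihilated submodule $F \subseteq M'$ that already surjects onto $N$ under $\widetilde{\varphi}$, reducing the lemma to Lemma \ref{sCase} applied to the induced surjection $F/p^s F \twoheadrightarrow N$ between $\Zps$-modules.

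First I would decompose $M'$ into its primary components. Write $M' = F \oplus C \oplus R$, where $F$ collects the $p$-primary cyclic summands of order at most $p^r$, $C$ collects the $p$-primary cyclic summands of order strictly greater than $p^r$, and $R$ collects everything else (the torsion-free part and the prime-to-$p$ torsion). Since $p^r M = 0$, the image $\iota(M)$ is $\Zpr$-annihilated, so $\iota(M) \subseteq F \oplus C_0$, where $C_0 \subseteq C$ denotes the $\Zpr$-annihilated submodule of $C$. The key observation is that for each cyclic summand $\Zp{u} \subseteq C$ with $u>r$, the $\Zpr$-annihilated piece is $p^{u-r}\Zp{u}$, which is contained in $p \cdot \Zp{u}$; hence $C_0 \subseteq pC$.

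From this I would derive $\widetilde{\varphi}(\iota(M)) \subseteq \widetilde{\varphi}(F) + \widetilde{\varphi}(pC) \subseteq \widetilde{\varphi}(F) + pN$. Since $\varphi = \widetilde{\varphi} \circ \iota$ is surjective, this yields $N = \widetilde{\varphi}(F) + pN$, and Lemma \ref{downDog} forces $\widetilde{\varphi}(F) = N$. Thus $\widetilde{\varphi}|_F$ is a surjection from the $\Zpr$-module $F$ onto the $\Zps$-module $N$, which factors through a surjection $F/p^s F \twoheadrightarrow N$ of $\Zps$-modules. Writing $F = \bigoplus_{u=1}^{r} (\Zp{u})^{a_u}$ and applying Lemma \ref{sCase} produces the chain
$$\dim_{\Zps}(N) \leq \dim_{\Zps}(F/p^s F) = \sum_{u=s}^{r} a_u = \sum_{t=s}^{r} \dim_{\Zpt}(M'),$$
since $\Zp{u}/p^s\Zp{u} \cong \Zp{\min(u,s)}$ contributes a $\Zps$-summand precisely when $u \geq s$, and $a_u = \dim_{\Zpu}(M')$ for $u \leq r$.

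The step I expect to be most delicate is the containment $C_0 \subseteq pC$: this is what prevents the high-order summands of $M'$ (those of order $p^u$ with $u>r$) from contributing to $\dim_{\Zps}(N)$, and it is the only place where the hypothesis $p^r M = 0$ interacts directly with the structure of $M'$. Without it, one would only get the weaker bound $\sum_{t \geq s} \dim_{\Zpt}(M') \geq \dim_{\Zps}(N)$, which allows extra contributions from summands outside the range $[s,r]$ that we cannot control.
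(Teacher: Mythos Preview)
Your proposal is correct and follows essentially the same approach as the paper's proof. Both arguments decompose $M'$ into a $p^r$-annihilated summand (your $F$, the paper's $A$) and a complement, observe that the image of $\iota$ in the complement lies in $p$ times it, invoke Lemma~\ref{downDog} to conclude that $\widetilde{\varphi}$ already surjects from the $p^r$-annihilated part, and finish by tensoring with $\Zps$ (equivalently, passing to $F/p^sF$) and applying Lemma~\ref{sCase}. Your handling of the reduction step, arguing directly that $\iota(M) \subseteq F \oplus C_0$ with $C_0 \subseteq pC$, is arguably a bit cleaner than the paper's, which splits $M$ as $\iota^{-1}(A) \oplus \iota^{-1}(B)$ without justifying that this is a direct sum decomposition; but the substance is identical.
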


\begin{proof} We will first argue that we may assume $p^r M'=0$ without loss of generality. Write $M' = A \oplus B$, where $p^r A = 0$, and $B$ is a direct sum of copies of $\mathbb{Z}$, $\mathbb{Z}/q^t$ for various $q \neq p$ and $t \in \mathbb{N}$, and $\Zpt$ for $t>r$. This gives a decomposition $M = \iota^{-1}(A) \oplus \iota^{-1}(B)$. The restriction of $\iota$ to $\iota^{-1}(B)$ must have image contained in $pB$, so the same restriction of $\widetilde{\varphi} \circ \iota$ has image contained in $pN$. Furthermore, since $\widetilde{\varphi}$ is a surjection, we have that $\textrm{Im}(\widetilde{\varphi} \circ \iota \lvert_{\iota^{-1}(A)}) + \textrm{Im}(\widetilde{\varphi} \circ \iota \lvert_{\iota^{-1}(B)}) = N$, so in particular $\textrm{Im}(\widetilde{\varphi} \circ \iota \lvert_{\iota^{-1}(A)}) + pN = N$. By Lemma \ref{downDog} we then have $\textrm{Im}(\widetilde{\varphi} \circ \iota \lvert_{\iota^{-1}(A)}) = N$. We may therefore restrict $M'$ to $A$ and $M$ to $\iota^{-1}(A)$ in the diagram without affecting the hypotheses. In particular, since $p^{r}A=0$ it suffices to prove the lemma in the case that $p^r M'=0$.

We now tensor the diagram with $\Zps$; since $p^s N=0$, we have $N \otimes \Zps \cong N$. Since $p^r M' = 0$, we have $\dim_{\Zps}(M' \otimes \Zps) = \sum_{t=s}^r \dim_{\Zpt}(M')$. By Lemma \ref{sCase}, since $\widetilde{\varphi} \otimes \Zps$ is a surjection we have $\dim_{\Zps}(M' \otimes \Zps) \geq \dim_{\Zps}(N \otimes \Zps)$, which completes the proof. \end{proof}

By applying Lemma \ref{realSurjection} in each degree we immediately obtain the following.

\begin{corollary}[The `Sandwich' Lemma] \label{surjectOnHype} Let $M,M',N$ be graded $\mathbb{Z}$-modules. Let $p$ be prime and let $r\geq s \in \mathbb{N}$. Suppose that $p^r  M = 0$ and that $p^s N = 0$. Let $\varphi: M \longrightarrow N$ be a surjection which admits a factorization \begin{center}
\begin{tabular}{c}
\xymatrix{
M' \ar[dr] & \\
M \ar^{\varphi}[r] \ar[u] & N.
}
\end{tabular}
\end{center}

If $N$ is $\Zps$-hyperbolic then $M'$ is $p$-hyperbolic concentrated in exponents $s, s+1, \dots, r$. \qed \end{corollary}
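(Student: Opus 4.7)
The plan is to show the statement reduces to Lemma \ref{realSurjection} applied one degree at a time, combined with additivity of $\dim_{\Zpt}$ across direct sums.

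First, I would observe that since $M$, $M'$, $N$ are graded $\mathbb{Z}$-modules and $\varphi$, $\iota$, $\widetilde{\varphi}$ are (implicitly) degree-preserving, the hypotheses of Lemma \ref{realSurjection} are satisfied in each individual degree $i$: the map $\varphi_i : M_i \to N_i$ is a surjection factoring as $\widetilde{\varphi}_i \circ \iota_i$, with $p^r M_i = 0$ and $p^s N_i = 0$. Applying that lemma degree-by-degree yields
$$\sum_{t=s}^{r} \dim_{\Zpt}(M'_i) \;\geq\; \dim_{\Zps}(N_i) \qquad \text{for every } i.$$

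Next I would sum these inequalities over $i = 1, \ldots, m$ and invoke additivity of $\dim_{\Zpt}$ across finite direct sums (which is immediate from the structure theorem used in Section \ref{ModuleSection}) to rewrite both sides in partial-sum form. This gives
$$\sum_{t=s}^{r} \dim_{\Zpt}\Bigl(\bigoplus_{i=1}^{m} M'_i\Bigr) \;\geq\; \dim_{\Zps}\Bigl(\bigoplus_{i=1}^{m} N_i\Bigr).$$
The left-hand side is precisely the quantity $a_m$ appearing in Definition \ref{defdef} for $M'$ with $S = \{s, s+1, \ldots, r\}$, while the right-hand side is the corresponding $a_m$ for $N$ with $S = \{s\}$.

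Finally, by hypothesis $N$ is $\Zps$-hyperbolic, so $\dim_{\Zps}\bigl(\bigoplus_{i=1}^{m} N_i\bigr)$ grows exponentially in $m$. The inequality then forces the partial sums for $M'$ to grow at least as fast, which is the definition of $M'$ being $p$-hyperbolic concentrated in exponents $s, s+1, \ldots, r$. There is no real obstacle here: the entire content is packaged inside Lemma \ref{realSurjection}, and the remaining work is just the bookkeeping of passing from pointwise dimension inequalities to an exponential growth statement via the additivity of $\dim_{\Zpt}$.
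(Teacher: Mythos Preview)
Your proposal is correct and matches the paper's own argument exactly: the paper simply states that the corollary follows ``by applying Lemma \ref{realSurjection} in each degree,'' and your write-up spells out precisely that bookkeeping (degree-wise inequality, summation, additivity of $\dim_{\Zpt}$, and comparison with Definition \ref{defdef}).
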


\begin{lemma} \label{reduceToField} Let $\varphi: M \longrightarrow N$ be a map of $\Zps$-modules, with $N$ free. Then $\dim_{\Zps}(\textrm{Im}(\varphi)) = \dim_{\Zp}(\textrm{Im}(\varphi \otimes \Zp))$. \end{lemma}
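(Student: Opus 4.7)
The plan is to establish the equality via a common middle quantity, namely $\dim_{\Zp}(p^{s-1} I)$, where $I := \textrm{Im}(\varphi) \subset N$. The chain of equalities will be
$$\dim_{\Zp}(\textrm{Im}(\varphi \otimes \Zp)) = \dim_{\Zp}(p^{s-1} I) = \dim_{\Zps}(I).$$

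For the first equality, I would consider the composite $\psi \colon M \xrightarrow{\varphi} N \xrightarrow{\cdot p^{s-1}} p^{s-1} N$. Because $p^s N = 0$, we have $\psi(px) = p^s \varphi(x) = 0$, so $\psi$ descends to a map $\bar\psi \colon M \otimes \Zp \longrightarrow p^{s-1} N$ with image $p^{s-1} I$. Since $N$ is free, inspection on a free basis shows that multiplication by $p^{s-1}$ induces a $\Zp$-linear isomorphism $N \otimes \Zp = N/pN \xrightarrow{\cong} p^{s-1} N$ (the inverse takes $p^{s-1}y$ to the class $\bar y$; injectivity uses freeness to conclude that $p^{s-1}y = 0$ forces $y \in pN$). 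Under this isomorphism, tracing a representative $x \in M$ through both routes shows that $\bar\psi$ is identified with $\varphi \otimes \Zp$, so $\textrm{Im}(\varphi \otimes \Zp) \cong p^{s-1} I$ as $\Zp$-vector spaces.

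For the second equality, I would apply the structure theorem to write $I \cong \bigoplus_{i=1}^k \mathbb{Z}/p^{t_i}$ with $1 \leq t_i \leq s$. Multiplication by $p^{s-1}$ kills $\mathbb{Z}/p^{t_i}$ when $t_i < s$ and picks out the unique $\mathbb{Z}/p$-subgroup of $\mathbb{Z}/p^s$ when $t_i = s$, so $p^{s-1} I \cong (\Zp)^{\#\{i : t_i = s\}}$. By the definition of $\dim_{\Zps}$ recalled in the module section, $\#\{i : t_i = s\} = \dim_{\Zps}(I)$, which completes the proof.

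The only real obstacle is the bookkeeping in the first step: checking that $\bar\psi$ and $(\cdot p^{s-1}) \circ (\varphi \otimes \Zp)$ agree as maps $M \otimes \Zp \to p^{s-1} N$, and that multiplication by $p^{s-1}$ really is an isomorphism $N/pN \to p^{s-1} N$ when $N$ is free. Both are immediate on basis elements, but it is the one place where one must pin down the tensor product convention. Everything else is a direct bookkeeping consequence of the structure theorem.
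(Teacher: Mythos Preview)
Your proof is correct and takes a genuinely different route from the paper's. The paper argues by choosing a basis of $M$, selecting a maximal subset $S$ of the order-$p^s$ generators on which $\varphi$ is injective, and then showing via Lemmas~\ref{summand} and~\ref{sCase} that $\textrm{Im}(\varphi\lvert_{\langle S\rangle}) \subset \textrm{Im}(\varphi) \subset \textrm{Im}(\varphi\lvert_{\langle S\rangle}) + pN$, which pins both dimensions to $\lvert S\rvert$. Your argument instead passes through the intermediate module $p^{s-1}I$: freeness of $N$ is used exactly once, to see that multiplication by $p^{s-1}$ identifies $N\otimes\Zp$ with $p^{s-1}N$ and hence identifies $\textrm{Im}(\varphi\otimes\Zp)$ with $p^{s-1}I$; the second equality then drops out of the cyclic decomposition of $I$. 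This is shorter and more conceptual, and it bypasses the auxiliary basis-manipulation lemmas entirely. The paper's approach has the minor advantage of staying self-contained within its own ``linear algebra over $\Zps$'' framework, whereas yours invokes the structure theorem for $I$ directly (harmless here, since $I$ has bounded exponent and Pr\"ufer's theorem covers even the infinitely generated case).
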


\begin{proof} Let $(e_1,s_1), \dots , (e_m,s_m), (e_1',s_1'), \dots , (e_{m'}',s_{m'}')$ be a basis of $M$, where $s_i = s$ and $s_i' < s$. Let $S$ be a maximal subset of the $e_i$ such that the restriction of $\varphi$ to the submodule of $M$ generated by $S$ is an injection. Denote this submodule by $\langle S \rangle$. By renumbering we may assume that $S=\{e_1, \dots e_k \}$ for some $k \leq n$. We clearly have $\textrm{Im}(\varphi \lvert_{\langle S \rangle}) \subset \textrm{Im}(\varphi)$, and we will now show that $\textrm{Im}(\varphi) \subset \textrm{Im}(\varphi \lvert_{\langle S \rangle}) + pN$.

Since $N$ is assumed free, and the elements $e_i'$ have order $p^{s_i}$ for $s_i < s$, we must have $\varphi(e_i') \in pN$. Now consider $e_j$, for $k+1 \leq j \leq m$. By construction of $S$, the restriction of $\varphi$ to $\langle S \cup \{ e_j \} \rangle$ is not injective, so there exist $\lambda_1 , \dots \lambda_k, \lambda \in \Zps$ with $\lambda \neq 0$ such that $\varphi(\sum_{i=1}^k \lambda_i e_i + \lambda e_j) = 0$. This implies that $\lambda \varphi(e_j) \in \textrm{Im}( \varphi \lvert_{\langle S \rangle})$. Thus,  $p^t \varphi(e_j) \in \textrm{Im}(\varphi \lvert_{\langle S \rangle})$ for some $t < s$. By Lemma \ref{summand} we may write $N = \textrm{Im}(\varphi \lvert_{\langle S \rangle}) \oplus C$ for some complementary module $C$, and under this correspondence we have $\varphi (e_j) = (\beta, \gamma)$ for $\gamma \in C$ and $\beta \in \textrm{Im}(\varphi \lvert_{\langle S \rangle})$. Since $p^t \varphi(e_j) \in \textrm{Im}(\varphi \lvert_{\langle S \rangle})$, we have $p^t \gamma = 0$, so by freeness of $N$, $t<s$ implies that $\gamma \in pN$, so $\varphi(e_j) \in \textrm{Im}(\varphi\lvert_{\langle S \rangle}) + pN$. We have now shown that all elements of the basis of $M$ are carried under $\varphi$ to $\textrm{Im}(\varphi\lvert_{\langle S \rangle}) + pN$, so $\textrm{Im}(\varphi) \subset \textrm{Im}(\varphi\lvert_{\langle S \rangle}) + pN$, as claimed.

Now, $\varphi\lvert_{\langle S \rangle}$ is split by Lemma \ref{summand}, so $\dim_{\Zps}(\textrm{Im}(\varphi\lvert_{\langle S \rangle})) = k$. Furthermore, by taking the inclusion on each summand there is a surjection $\textrm{Im}(\varphi\lvert_{\langle S \rangle}) \oplus pN \longrightarrow \textrm{Im}(\varphi\lvert_{\langle S \rangle}) + pN \subset N$, and $pN$ is annihilated by multiplication by $p^{s-1}$, so by Lemma \ref{sCase} $\dim_{\Zps}(\textrm{Im}(\varphi\lvert_{\langle S \rangle}) + pN) \leq k$. Since $\dim_{\Zps}(\textrm{Im}(\varphi\lvert_{\langle S \rangle}) + pN) \geq \dim_{\Zps}(\textrm{Im}(\varphi\lvert_{\langle S \rangle}))$, this implies that the former is equal to $k$. Thus, since $$\textrm{Im}(\varphi\lvert_{\langle S \rangle}) \subset \textrm{Im}(\varphi) \subset \textrm{Im}(\varphi\lvert_{\langle S \rangle}) + pN$$ applying Lemma \ref{summand} to the inclusions gives $$k= \dim_{\Zps}(\textrm{Im}(\varphi\lvert_{\langle S \rangle})) \leq \dim_{\Zps}(\textrm{Im}(\varphi)) \leq \dim_{\Zps}(\textrm{Im}(\varphi\lvert_{\langle S \rangle})+pN) = k,$$ so $\dim_{\Zps}(\textrm{Im}(\varphi))=k$.

To finish the proof we must show that $\dim_{\Zp}(\textrm{Im}(\varphi \otimes \Zp)) = k$. Since the images of $\varphi$ and $\varphi \lvert_{\langle S \rangle}$ differ only by at most $pN$, we have $\textrm{Im}(\varphi \otimes \Zp) = \textrm{Im}(\varphi \lvert_{\langle S \rangle} \otimes \Zp)$. Since $\varphi \lvert_{\langle S \rangle}$ is split injective, $\varphi \lvert_{\langle S \rangle} \otimes \Zp$ is injective, so $\dim_{\Zp}(\varphi \lvert_{\langle S \rangle} \otimes \Zp) = k$, which completes the proof. \end{proof}

\subsection{\textrm{Tor} and the Universal Coefficient Theorem}

The purpose of this section is to prove that for $t<s$ a map inducing an injection on homology with $\Zps$-coefficients also induces an injection on homology with $\Zpt$-coefficients (Lemma \ref{toTheFields}) provided that the domain is free. This follows straightforwardly from the Universal Coefficient Theorem for homology, where we regard $\Zpt$ as a module over $\Zps$. The inclusion of the bottom cell of a Moore space provides an easy counterexample to the converse; the algebraic point being that the converse of Lemma \ref{oneway} is false.

\begin{lemma} \label{TorAnnihilation} For any finitely generated $\Zps$-modules $M$, $N$ we have $$p^{s-1}\textrm{Tor}_{\Zps}(M,N)=0,$$ and furthermore if $M$ or $N$ is free then $\textrm{Tor}_{\Zps}(M,N)=0$. \end{lemma}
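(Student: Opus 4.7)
The plan is to treat the two claims separately, starting with the second. If $M$ (or, by symmetry, $N$) is free over $\Zps$, then it is flat, and so $\textrm{Tor}_{\Zps}(M,N)$ vanishes for this reason alone.

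For the first claim, I would reduce to cyclic modules. By the structure theorem for finitely generated $\Zps$-modules, $M$ decomposes as a direct sum $\bigoplus_i \mathbb{Z}/p^{a_i}$ with $1 \leq a_i \leq s$, and since $\textrm{Tor}$ commutes with direct sums in either variable, it suffices to prove $p^{s-1}\textrm{Tor}_{\Zps}(\mathbb{Z}/p^a, N) = 0$ for each such $a$. The case $a = s$ is covered by the free case (since then $\mathbb{Z}/p^a$ equals the ring $\Zps$ itself), so it remains to handle $1 \leq a \leq s-1$.

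To compute this Tor, I would use the standard $2$-periodic free resolution of $\mathbb{Z}/p^a$ over $\Zps$,
$$\cdots \xrightarrow{p^a} \Zps \xrightarrow{p^{s-a}} \Zps \xrightarrow{p^a} \Zps \longrightarrow \mathbb{Z}/p^a \longrightarrow 0,$$
whose exactness amounts to the observation that on $\Zps$ the kernel of multiplication by $p^a$ is $p^{s-a}\Zps$, and vice versa. Tensoring over $\Zps$ with $N$ and taking homology in degree one identifies $\textrm{Tor}_{\Zps}(\mathbb{Z}/p^a, N)$ with $\ker(p^a\cdot : N \to N)/p^{s-a}N$. For any $x \in N$ with $p^a x = 0$, we can write $p^{s-1}x = p^{s-a}(p^{a-1}x)$, exhibiting $p^{s-1}x$ as an element of $p^{s-a}N$, so it vanishes in the quotient. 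This yields $p^{s-1}\textrm{Tor}_{\Zps}(\mathbb{Z}/p^a, N) = 0$, as required.

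The whole argument is essentially routine; the only point one has to be careful about is confirming that the $2$-periodic resolution genuinely is exact over $\Zps$, but this is immediate from the kernel computation above.
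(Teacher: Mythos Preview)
Your proof is correct and follows essentially the same strategy as the paper: reduce via the structure theorem to cyclic summands $\mathbb{Z}/p^a$, and compute $\textrm{Tor}_1$ from an explicit resolution. In fact your argument is slightly more careful: you use the genuine $2$-periodic free resolution, whereas the paper writes down only the two-term complex $\Zps \xrightarrow{\cdot p^a} \Zps$, which is not exact on the left; either way $\textrm{Tor}_1$ is a subquotient of $\ker(p^a\cdot : N \to N)$ and hence annihilated by $p^a \leq p^{s-1}$, so both arguments reach the same conclusion.
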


\begin{proof} For any ring $R$ and $R$-module $M$ we have $\textrm{Tor}_R(R,M)=0$, since $R$ is free as an $R$-module. If $1 \leq t<s$, then a free resolution of $\Zpt$ over $\Zps$ is given by $$0 \longrightarrow \Zps \xrightarrow{\cdot p^t} \Zps \longrightarrow 0,$$ so,  for any $\Zps$-module $M$, $\textrm{Tor}_{\Zps}(\Zpt, M) = \textrm{Ker}(M \xrightarrow{\cdot p^s} M)$, which is annihilated by multiplication by $p^t$, hence in particular is annihilated by multiplication by $p^{s-1}$. Since any $\Zps$-module decomposes as a direct sum of modules isomorphic to $\Zpt$ for $1 \leq t \leq s$, both parts of the Lemma now follow by additivity of $\textrm{Tor}$. \end{proof}

\begin{lemma} \label{oneway} Let $\varphi: M \longrightarrow N$ be a map of $\Zps$-modules, with $M$ free. Let $t<s$. If $\varphi$ is injective then $\varphi \otimes \Zpt : M \otimes \Zpt \longrightarrow N \otimes \Zpt$ is injective. \end{lemma}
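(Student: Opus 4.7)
The plan is a direct element chase, using freeness of $M$ at the very end. Suppose $x \in M$ satisfies $(\varphi \otimes \Zpt)(x \otimes 1) = 0$, meaning $\varphi(x) \otimes 1 = 0$ in $N \otimes \Zpt$. This is equivalent to $\varphi(x) \in p^t N$, so I write $\varphi(x) = p^t n$ for some $n \in N$.

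The key move is to exploit the ambient annihilation by $p^s$. Since $N$ is a $\Zps$-module, $p^s n = 0$, and therefore
$$p^{s-t} \varphi(x) = p^{s-t} \cdot p^t n = p^s n = 0.$$
Because $\varphi$ is $\Zps$-linear and injective, $\varphi(p^{s-t} x) = p^{s-t} \varphi(x) = 0$ forces $p^{s-t} x = 0$ in $M$.

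Finally, freeness of $M$ converts this annihilation into divisibility: writing $M = \bigoplus_i \Zps \cdot e_i$ and $x = \sum_i x_i e_i$ with $x_i \in \Zps$, the identity $p^{s-t} x = 0$ gives $p^{s-t} x_i = 0$ for every $i$, which in $\Zps$ is equivalent to $x_i \in p^t \Zps$. Hence $x \in p^t M$, and so $x \otimes 1 = 0$ in $M \otimes \Zpt$, as required.

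There is no real obstacle here; the only thing to notice is that the hypothesis $t < s$ is exactly what is needed for $s-t \geq 1$, so that the chain of equalities $p^{s-t} \varphi(x) = p^s n$ is meaningful and the step translating $p^{s-t}$-annihilation into $p^t$-divisibility works inside the free $\Zps$-summands. Freeness of $M$ is essential at the last step: over a non-free $\Zps$-module, $p^{s-t}x = 0$ need not imply $x \in p^t M$, which is precisely why the converse (mentioned in the preamble via the bottom-cell inclusion of a Moore space) fails.
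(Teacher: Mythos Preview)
Your proof is correct and essentially identical to the paper's own argument: both multiply by $p^{s-t}$ to pass from $\varphi(x)\in p^tN$ to $\varphi(p^{s-t}x)=0$, invoke injectivity of $\varphi$, and then use freeness of $M$ to convert $p^{s-t}x=0$ into $x\in p^tM$. The only cosmetic difference is that the paper phrases this as a contrapositive (starting from an $x$ not divisible by $p^t$ and producing a nonzero element of $\ker\varphi$), whereas you argue directly.
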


\begin{proof} Note that $M \otimes \Zpt$ is a free $\Zpt$-module. Suppose that $\varphi \otimes \Zpt$ is not injective. Then there exists $x\in M$ which is not divisible by $p^t$ such that $\varphi(x)$ is divisible by $p^t$. By freeness of $M$, $p^{s-t} x$ is not divisible by $p^s$, hence is nonzero, but $\varphi(p^{s-t} x) = p^{s-t} \varphi (x)$ is divisible by $p^s$, hence is zero. That is, $\varphi$ is not injective. \end{proof}

\begin{lemma} \label{toTheFields} Let $t<s \in \mathbb{N}$. Let $f: X \longrightarrow Y$ be a map of spaces, and suppose that $H_*(X ; \Zps)$ is a free $\Zps$-module. If $$f_* : H_*(X ; \Zps) \longrightarrow H_*(Y ; \Zps)$$ is injective then $$f_* : H_*(X ; \Zpt) \longrightarrow H_*(Y ; \Zpt)$$ is injective. \end{lemma}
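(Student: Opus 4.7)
The plan is to exhibit a natural injection $\bar\iota_*^X : H_n(X; \Zpt) \hookrightarrow H_n(X; \Zps)$ so that the hypothesis on $\Zps$-homology can be transferred. Consider the short exact sequence of abelian groups
\[ 0 \to \Zpt \xrightarrow{\cdot p^{s-t}} \Zps \xrightarrow{\rho} \mathbb{Z}/p^{s-t} \to 0, \]
in which the first map is multiplication by $p^{s-t}$. Covariant functoriality of $H_n(Z; -)$ in the coefficient group yields a natural map $\bar\iota_*^Z : H_n(Z; \Zpt) \to H_n(Z; \Zps)$, fitting into a natural long exact sequence in homology. Naturality in $Z$ gives a commutative square relating $\bar\iota_*^X$, $f_*$ on $\Zpt$-homology, $\bar\iota_*^Y$, and $f_*$ on $\Zps$-homology.

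The key step is to show $\bar\iota_*^X$ is injective in every degree. From exactness, this is equivalent to surjectivity of $\rho_* : H_{n+1}(X; \Zps) \to H_{n+1}(X; \mathbb{Z}/p^{s-t})$ in each degree. A direct chain-level argument shows that $\rho_*$ is surjective in degree $n+1$ if and only if multiplication by $p^{s-t}$ is injective on $H_n(X; \Zps)$: given a cycle $c' \in C_{n+1}(X; \mathbb{Z}/p^{s-t})$ with arbitrary lift $\tilde c \in C_{n+1}(X; \Zps)$, one has $\partial \tilde c = p^{s-t} e$ for a cycle $e \in C_n(X; \Zps)$ satisfying $p^{s-t}[e] = 0$ in $H_n(X; \Zps)$; the lift can be adjusted to a cycle lifting $c'$ if and only if $[e] = 0$. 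Since $H_n(X; \Zps)$ is a free $\Zps$-module and $p^{s-t}$ is nonzero in $\Zps$ (because $t \geq 1$), multiplication by $p^{s-t}$ on this free module is injective, so $[e] = 0$.

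Given injectivity of $\bar\iota_*^X$, the composite $\bar\iota_*^Y \circ f_* = f_* \circ \bar\iota_*^X$ is injective as a composition of injections, using also the hypothesis that $f_*$ is injective on $\Zps$-homology. Hence if $f_*(\alpha) = 0$ in $H_n(Y; \Zpt)$, then $(\bar\iota_*^Y \circ f_*)(\alpha) = 0$, forcing $\alpha = 0$; this gives the desired injectivity. The main obstacle is the chain-level verification in the second paragraph; the rest is routine diagram chasing. An alternative avenue, fitting the section's theme, would be to apply the Universal Coefficient Theorem naturally in the coefficient group, obtaining a map of split short exact sequences with $\bar\iota_*^X$ in the middle and outer maps induced by $\cdot p^{s-t}$ on $H_*(X;\mathbb{Z}) \otimes -$ and $\textrm{Tor}(H_*(X;\mathbb{Z}), -)$; the freeness hypothesis forces all $p$-torsion in $H_*(X;\mathbb{Z})$ to have order $\geq p^s$, making those outer maps injective, whence the five lemma concludes.
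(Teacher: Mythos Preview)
Your argument is correct, and it takes a genuinely different route from the paper. The paper applies the Universal Coefficient Theorem over the ring $\Zps$, viewing $\Zpt$ as a $\Zps$-module: freeness of $H_*(X;\Zps)$ kills the $\textrm{Tor}_{\Zps}$ term, so $H_n(X;\Zpt)\cong H_n(X;\Zps)\otimes_{\Zps}\Zpt$ naturally, and then the algebraic Lemma~\ref{oneway} (injections from free $\Zps$-modules stay injective after $-\otimes\Zpt$) finishes the job. You instead run the change-of-coefficients long exact sequence for $0\to\Zpt\xrightarrow{\cdot p^{s-t}}\Zps\to\mathbb{Z}/p^{s-t}\to 0$ and show directly, by a chain-level computation, that freeness forces the connecting map to vanish, giving a natural embedding $H_*(X;\Zpt)\hookrightarrow H_*(X;\Zps)$; injectivity of $f_*^t$ then falls out of the commuting square. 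The paper's route is slightly cleaner in context because it reuses the module-theoretic lemmas already set up in that section, whereas your route is more self-contained and avoids invoking the UCT over a non-field coefficient ring. Your suggested alternative via the integral UCT also works, but note that the step ``freeness of $H_*(X;\Zps)$ forces all $p$-torsion in $H_*(X;\mathbb{Z})$ to have order $\geq p^s$'' implicitly uses that summands of free modules over the local ring $\Zps$ are free, which is worth making explicit.
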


\begin{proof} Write $f_*^t$ for the induced map on homology with $\Zpt$-coefficients, and likewise $f_*^s$. Applying the universal coefficient theorem for the module $\Zpt$ over the ring $\Zps$ we get a map of short exact sequences \begin{center}
\begin{tabular}{c}
\xymatrix{
0 \ar[r] & H_n(X; \Zps) \otimes \Zpt \ar[r] \ar^{f^s_* \otimes \Zpt}[d] & H_n(X; \Zpt) \ar[r] \ar^{f^t_*}[d] & 0 \ar[d] \ar[r] & 0 \\
0 \ar[r] & H_n(Y; \Zps) \otimes \Zpt \ar[r] & H_n(Y; \Zpt) \ar[r] & \textrm{Tor}(H_{n-1}(Y;\Zps),\Zpt) \ar[r] & 0.
}
\end{tabular}
\end{center}

The $\textrm{Tor}$ term in the top row vanishes by the freeness hypothesis on $H_*(X; \Zps)$. Since the first map in each exact sequence is an injection, $f^t_*$ is injective if and only if $f^s_* \otimes \Zpt$ is injective. By Lemma \ref{oneway}, if $f^s_*$ is injective, then $f^s_* \otimes \Zpt$ is injective, so $f^t_*$ is injective, as required. \end{proof}

\section{Free differential Lie algebras} \label{LieSection}

In this section we will show that the module of boundaries  $BL(x,dx)$ in the free differential Lie algebra $L(x,dx)$ over $\Zpr$ is $\Zpr$-hyperbolic. In the situation of Theorem \ref{HPrelim} we will obtain a factorization of the tensor map $$BL(x,dx) \longrightarrow \pi_*(\Omega Y) \longrightarrow BL(x,dx) \otimes \Zps,$$ which will imply by Corollary \ref{surjectOnHype} (The `Sandwich' Lemma) that $\Omega Y$ must be $p$-hyperbolic concentrated in exponents $s, s+1, \dots , r$. The desired $\Zpr$-hyperbolicity of $BL(x,dx)$ will follow from Cohen, Moore, and Neisendorfer's description of the homology of $L(x,dx)$, which is Proposition \ref{acyclicHomology}.

Throughout this section we work over a ground ring $R = \Zpr$ for $p \neq 2$. The next definitions are as in \cite{CMNTorsion}.

\begin{definition} \label{LieDef} A \textit{graded Lie algebra} is a graded $\Zpr$-module $L$, together with a $\Zpr$-bilinear pairing $$[ \phantom{x},\phantom{x} ]: L_n \times L_m \longrightarrow L_{n+m},$$ called a \textit{Lie bracket} which satisfies the relations of \begin{itemize}
    \item (antisymmetry): $[x,y]=-(-1)^{\textrm{deg}(x)\textrm{deg}(y)}[y,x]$ for all $x$ and $y$ in $L$.
    \item (the Jacobi identity): $[x,[y,z]] = [[x,y],z]+(-1)^{\textrm{deg}(x)\textrm{deg}(y)}[y,[x,z]]$ for all $x$, $y$, and $z$ in $L$.
    \item $[x,[x,x]]=0$ for all $x$ of odd degree.
\end{itemize}  \end{definition}

Let $V$ be a graded $\Zpr$-module. Denote by $L(V)$ the \textit{free Lie algebra} on $V$. There is a linear map $j: V \longrightarrow L(V)$ and $L(V)$ is characterized up to canonical isomorphism as follows. For any map $f: V \longrightarrow L$ where $L$ is a graded Lie algebra, there is a unique map $g:L(V) \longrightarrow L$ so that $g \circ j=f$. The Lie algebra $L(V)$ may be constructed as follows.

Let $L'(V)$ be the free nonassociative graded algebra on $V$, where we think of the operation as a bracket. Precisely, let $B_k$ be the set of bracketings of a string of $k$ symbols. Concatenation of bracketings gives an operation $B_{k_1} \times B_{k_2} \longrightarrow B_{k_1 + k_2}$, which makes $B = \bigcup_{i=1}^\infty B_i$ into a magma. As a module, $$L'(V)=\bigoplus_{k=1}^\infty (\bigoplus_{b \in B_k} V^{\otimes k}),$$ where we think of each copy of $V^{\otimes k}$ as being bracketed according to $b$. The bracket operation on $L'(V)$ is obtained by extending the operation on $B$ bilinearly.

The free Lie algebra $L(V)$ is obtained as the quotient of $L'(V)$ by the relations of Definition \ref{LieDef}, and automatically has the desired universal property. Denote by $\quot$ the quotient map $L'(V) \longrightarrow L(V)$. It follows that for $s<r$, we have $L(V \otimes \Zps)=L(V) \otimes \Zps$.

Note also that any map from $V$ into a graded $\Zpr$-module $A$ with a bilinear operation (that is to say, a nonassociative $\Zpr$-algebra) extends uniquely to a map of graded nonassociative algebras $L'(V) \longrightarrow  A$. The map $\quot$ is a map of nonassociative algebras, hence is uniquely determined by its effect on $V$, and we may therefore call it the \emph{natural quotient}.

\begin{definition} A \textit{differential Lie algebra} is a graded Lie algebra together with an $\Zpr$-linear map $d:L \longrightarrow L$ of degree $-1$, which \begin{itemize}
    \item is a differential: $d^2(x)=0$ for all $x$ in $L$.
    \item is a derivation: $d[x,y]=[dx,y]+(-1)^{\textrm{deg}(x)}[x,dy]$ for all $x$ and $y$ in $L$.
\end{itemize}
\end{definition}

If $V$ carries a differential $d$, then we may define a differential on $L'(V)$ which is the unique derivation extending $d$. This differential can be seen to satisfy the relations of Definition \ref{LieDef}, and therefore descends to give a differential on $L(V)$, which makes $L(V)$ into a differential Lie algebra.

When $p=3$, Samelson products in $\pi_*(\Omega X; \mathbb{Z}/3^r)$ fail to satisfy the Jacobi identity, so $L'(V)$ will also serve as a version of $L(V)$ which does not satisfy the Jacobi identity. For $p \neq 3$, $L'(V)$ may be replaced with $L(V)$ everywhere in this paper, which slightly simplifies things \cite[Remark 6.3]{CMNTorsion}.

Write $L(V)^k$ for the weight-$k$ component of $L(V)$, that is, the submodule generated by brackets of length $k$ in the elements of $V$. It follows from our construction of the free Lie algebra $L(V)$ that $L(V) \cong \bigoplus_{k=1}^\infty L(V)^k$, so weight gives a second grading on $L(V)$, and we shall write $\wt(x)=k$ whenever $x \in L(V)^k$. We will use subscripts (as in $L(V)_i$) for ordinary grading, and superscripts (as in $L(V)^k$) for weight. The dimension of the weighted components is given by the Witt formula, which we defined in Section \ref{CommonSection}.

\begin{theorem}{\cite[Theorems 3.2, 3.3]{Hilton}} \label{gradedWittCount} Let $V$ be a free graded $\mathbb{Z}$- or $\Zps$-module of total dimension $n$. Then the total dimension of $L(V)^k$ is $W_n(k)$. \qed \end{theorem}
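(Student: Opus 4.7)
The plan is to reduce to Hilton's ungraded count (Theorem~\ref{ungradedWitt}). The key observation is that $\dim L(V)^k$ depends only on $k$ and on the total rank $n$ of $V$, not on how the internal degrees of a chosen basis of $V$ are distributed: every defining relation of a graded Lie algebra (antisymmetry, Jacobi, and the odd-characteristic relation $[x,[x,x]]=0$) is homogeneous with respect to both the internal grading and the weight grading, so the computation of $\dim L(V)^k$ can be carried out within each weight component independently.

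Concretely, I would fix a homogeneous basis $v_1,\ldots,v_n$ of $V$, choose an ordering on it, and mimic Hilton's construction of basic products directly in the graded bracket. To see that the weight-$k$ basic products form a basis of $L(V)^k$, one checks that the signs introduced by graded antisymmetry differ from those of the ungraded antisymmetry only by $\pm 1$, which are units in both $\mathbb{Z}$ and $\Zps$ (since $p \neq 2$). Thus the combinatorial reduction Hilton uses to show that the basic products span and are linearly independent carries over verbatim, and we inherit the count $|\mathscr{L}_k| = W_n(k)$.

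A slicker alternative is via the Poincar\'e--Birkhoff--Witt theorem, which in this free setting identifies the universal enveloping algebra $U(L(V))$ with the tensor algebra $T(V)$ as weight-graded modules. Comparing Hilbert series in the weight variable $t$ yields the identity
\[
\frac{1}{1-nt} = \prod_{k \geq 1} \frac{1}{(1-t^k)^{a_k}},
\]
where $a_k = \dim L(V)^k$, and Möbius inversion on the logarithm of both sides recovers $a_k = W_n(k)$. The main obstacle in either approach is ensuring that the argument survives passage from $\mathbb{Z}$ to $\Zps$: freeness of $V$ is what makes both PBW and Hilton's basis argument go through over $\Zps$, and the standing hypothesis $p \neq 2$ is what keeps the graded-antisymmetry signs invertible.
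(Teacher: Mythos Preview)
The paper does not prove this statement: it is stated with a citation to Hilton and an immediate \qed, so there is no argument in the paper to compare yours against. That said, both of your approaches share a genuine gap, and in fact the asserted equality fails once $V$ has odd-degree basis elements.

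Your first approach assumes that the graded relations differ from the ungraded ones only by unit signs, so that Hilton's basic-product reduction ``carries over verbatim''. It does not. For an odd-degree generator $y$, graded antisymmetry reads $[y,y]=-(-1)^{|y|^2}[y,y]=[y,y]$, which is vacuous, whereas the ungraded relation forces $2[y,y]=0$ and hence $[y,y]=0$ over $\Zps$ with $p$ odd. Thus $[y,y]$ is a new nonzero bracket in the graded free object with no ungraded counterpart; the set of basic products genuinely changes, not just their coefficients. Concretely, if $V=\langle y\rangle$ with $|y|$ odd then $\dim L(V)^2=1$ but $W_1(2)=0$; and for $V=\langle x,dx\rangle$ with $|x|$ even (the case actually used in Lemma~\ref{ManyBoundaries}) one has $\dim L(V)^2=2$ while $W_2(2)=1$. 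Your PBW approach has the same blind spot: in the graded setting PBW identifies $T(V)$ with the \emph{super}-symmetric algebra $\textrm{Sym}(L_{\textrm{even}})\otimes\Lambda(L_{\textrm{odd}})$, so the Hilbert-series identity becomes
\[
\frac{1}{1-nt}=\prod_{k\geq 1}\frac{(1+t^k)^{o_k}}{(1-t^k)^{e_k}},
\]
with $e_k,o_k$ the even and odd ranks of $L(V)^k$, and M\"obius inversion on this does not return $W_n(k)$ for the total rank $e_k+o_k$. What Lemma~\ref{ManyBoundaries} actually needs is only that $\dim L(V)^k$ grows exponentially in $k$, and that does follow from either identity; but the exact equality with $W_n(k)$ is not correct as stated when $V$ has odd-degree generators.
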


\subsection{Homology and boundaries}

Let $x$ be an even-dimensional class in a graded Lie algebra $L$ over $\Zpr$ for $p \neq 2$. Let $$\tau_k(x) = \textrm{ad}^{p^k-1}(x)(dx),$$  $$\textrm{so } \deg(\tau_k(x))=p^k\deg(x)-1$$ and let $$\sigma_k(x) = \frac{1}{2} \sum_{j=1}^{p^k-1} \frac{1}{p}{p^k \choose j}[\textrm{ad}^{j-1}(x)(dx), \textrm{ad}^{p^k-1-j}(x)(dx)],$$ $$\textrm{so } \deg(\sigma_k(x))=p^k\deg(x)-2,$$ where we understand the coefficients $\frac{1}{p}{p^k \choose j}$ to be computed in the integers and then reduced mod $p$.

\begin{proposition}{\cite[Proposition 4.9]{CMNTorsion}} \label{acyclicHomology} Let $V$ be an acyclic differential $\Zp$-vector space. Write $L(V) \cong HL(V) \oplus K$, for an acyclic module $K$. If $K$ has an acyclic basis, that is, a basis $$\{x_\alpha, y_\alpha, z_\beta, w_\beta\},$$ where $\alpha$ and $\beta$ range over index sets $\mathscr{I}$ and $\mathscr{J}$ respectively, and we have $$d(x_\alpha) = y_\alpha, \textrm{ deg}(x_\alpha) \textrm{ even,}$$ $$d(z_\beta) = w_\beta, \textrm{ deg}(z_\beta) \textrm{ odd,}$$ then $HL(V)$ has a basis $$\{\tau_k(x_\alpha), \sigma_k(x_\alpha)\}_{\alpha \in \mathscr{I}, k \geq 1}.$$ \end{proposition}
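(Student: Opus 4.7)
The plan is to reduce to the universal enveloping algebra, which is the tensor algebra, and then transfer back. Since $V$ is acyclic and free over $\Zp$, I would first choose an acyclic basis $\{x_\alpha,y_\alpha,z_\beta,w_\beta\}$ of $V$ itself, with $dx_\alpha=y_\alpha$ and $dz_\beta=w_\beta$; the summand $K$ in the statement is then the sub-chain-complex of $L(V)$ generated by the ``obvious'' acyclic pairs lying in each weight, and $HL(V)$ is the homology-carrying complement. By PBW (valid integrally and hence over $\Zp$), $UL(V)\cong T(V)$ as DG Hopf algebras. Since $V$ is acyclic and free, the iterated Künneth formula gives $\widetilde{H}_*(V^{\otimes n})=0$ for $n\geq 1$, so $\widetilde{H}_*(T(V))=0$.

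Next I would use the Milnor--Moore theorem (for $p$ odd) which identifies the primitives of $T(V)=UL(V)$ with the free \emph{restricted} Lie algebra $L^{\mathrm{res}}(V)$. The same Künneth argument, together with the fact that $L^{\mathrm{res}}(V)$ injects into $T(V)$ with quotient a tensor product, shows that $L^{\mathrm{res}}(V)$ is acyclic. The Lie algebra $L(V)$ differs from $L^{\mathrm{res}}(V)$ precisely by iterated restricted $p$-th powers $\xi^{[p^k]}$. For $\xi=x_\alpha$ even, the identity $[\xi^{[p^k]},\eta]=\mathrm{ad}(\xi)^{p^k}(\eta)$ in $L^{\mathrm{res}}(V)$ together with the derivation property produces, after computing $d(x_\alpha^{[p^k]})$, the element $\tau_k(x_\alpha)=\mathrm{ad}^{p^k-1}(x_\alpha)(y_\alpha)$ as a cycle of $L(V)$ that becomes a boundary in $L^{\mathrm{res}}(V)$; hence $\tau_k(x_\alpha)$ is a non-trivial class in $HL(V)$. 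The class $\sigma_k(x_\alpha)$ arises analogously by tracking the extra contribution from $d^2(x_\alpha^{p^k})=0$ in $T(V)$ expanded by the Leibniz rule: the binomial coefficients $\tfrac{1}{p}\binom{p^k}{j}$, which are integers by Kummer, organize the ``middle terms'' into a second independent cycle. A direct Leibniz computation (using $dy_\alpha=0$, $[y_\alpha,y_\alpha]=0$ since $p\neq 2$, and Jacobi) verifies $d\tau_k(x_\alpha)=d\sigma_k(x_\alpha)=0$.

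The main obstacle is completeness: showing the listed classes exhaust $HL(V)$. My approach would be an Euler--Poincaré / Poincaré-series count, using the weight grading on $L(V)$. The Witt formula (Theorem \ref{gradedWittCount}) gives the total dimension of $L(V)^k$, and PBW expresses the Poincaré series of $T(V)$ in terms of that of $L^{\mathrm{res}}(V)$. Combining the acyclicity of $T(V)$ with the known contribution of the $p$-th power operation to $L^{\mathrm{res}}(V)/L(V)$ pins down the Poincaré series of $HL(V)$ exactly. One then checks that the proposed basis $\{\tau_k(x_\alpha),\sigma_k(x_\alpha)\}$, which contributes two classes of weight $p^k$ per $\alpha$ and per $k\geq 1$ and nothing else, realizes this Poincaré series, and that these classes are $\Zp$-linearly independent in $HL(V)$ (for independence one can use the fact that the leading weight-$p^k$ term of $\tau_k(x_\alpha)$ is distinct from any boundary, and that $\tau_k$ and $\sigma_k$ sit in different total degrees $p^k\deg(x_\alpha)-1$ and $p^k\deg(x_\alpha)-2$). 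The genuinely delicate step is the Poincaré-series bookkeeping, which is where careful induction on weight and $k$ is needed to avoid overcounting contributions from iterated $p$-th powers of already-non-primitive brackets.
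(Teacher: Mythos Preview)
First, note that the paper does not give its own proof of this proposition: it is quoted verbatim from Cohen--Moore--Neisendorfer \cite[Proposition 4.9]{CMNTorsion} and used as a black box. So there is no in-paper argument to compare against; the relevant comparison is with the CMN proof.

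Your overall strategy --- pass to $UL(V)\cong T(V)$, use acyclicity of $T(V)$, and then exploit the gap between $L(V)$ and the free \emph{restricted} Lie algebra $L^{\mathrm{res}}(V)$ to produce the classes $\tau_k$ and $\sigma_k$ --- is indeed the CMN approach. In particular, the observation that $d(x^{[p^k]})$ lands in $L(V)$ and equals $\tau_k(x)$ (up to the $\sigma_k$ correction) is exactly the mechanism by which these homology classes arise.

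However, there is a genuine misreading of the statement that undermines your execution. The basis $\{x_\alpha,y_\alpha,z_\beta,w_\beta\}$ is a basis of $K$, the acyclic complement of $HL(V)$ inside $L(V)$, \emph{not} a basis of $V$. The index sets $\mathscr{I}$ and $\mathscr{J}$ are typically infinite: each $x_\alpha$ is an even-degree bracket of arbitrary weight in $L(V)$, and the classes $\tau_k(x_\alpha)$, $\sigma_k(x_\alpha)$ are built from \emph{all} of these, not only from the generators of $V$. Your Poincar\'e-series count, which allots ``two classes of weight $p^k$ per $\alpha$'' with $\alpha$ ranging over a basis of $V$, therefore drastically undercounts and cannot close. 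The correct count must track, for every even-degree element of the acyclic basis of $K$ (at every weight), the contributions of its iterated $p$-th powers in $L^{\mathrm{res}}(V)$; CMN handle this by an inductive comparison of $L(V)$ with $L^{\mathrm{res}}(V)$ across weights, and this is precisely the ``delicate bookkeeping'' you allude to but do not carry out. Once you reinterpret the $x_\alpha$ correctly, your sketch of the production of $\tau_k$ and $\sigma_k$ survives, but the completeness argument needs to be rebuilt from scratch.
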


\begin{remark} \label{basisChoice} An acyclic basis for $K$ may always be chosen, by the following inductive procedure. Write $K_i$ for the $i$-th graded component of $K$. Then $d: K_{i+1} \longrightarrow K_i$, and since $K$ is acyclic we have $\textrm{Im}(d)=\textrm{Ker}(d)$ in each $K_i$. Assume that we have a basis of $\textrm{Ker}(d) \subset K_i$. Because $\textrm{Ker}(d) = \textrm{Im}(d)$, $d$ induces an isomorphism $\faktor{K_{i+1}}{\textrm{Ker}(d)} \longrightarrow \textrm{Im}(d)$. Choose representatives of this basis in $K_{i+1}$, and choose a basis of $\textrm{Ker}(d) \subset K_{i+1}$. Combining these two sets gives a basis of $K_{i+1}$, and the subset which forms a basis of $\textrm{Ker}(d)$ is precisely what we need to continue the induction. The induction can be started using the fact that $K_{-1}=0$. \end{remark}

Recall that we write $L(V)^k$ for the weight-$k$ component of $L(V)$, that is, the submodule generated by brackets of length $k$ in the elements of $V$, and recall also that weight defines a grading. Note that the differential $d$ preserves weight. The operations $\tau_k$ and $\sigma_k$ satisfy $$\wt(\tau_k(x))=p^k\wt(x),$$ $$\wt(\sigma_k(x))=p^k\wt(x).$$

We will use weight to produce a modified dimension function which makes precise the idea that `most' of the decomposition of $L(V)$ in Proposition \ref{acyclicHomology} consists of the summand $K$; the summand $HL(V)$ is comparatively small.

\begin{definition} Let $M$ be a $\Zpr$-module, together with a grading $\wt$, which we think of as a weight, such that each weight-component $M^i$ is free and finitely generated. Define $\dim^k(M) \in \mathbb{R}$ by setting $$\dim^k(M) = \sum_{i=1}^k \frac{\dim(M^i)}{i}.$$ \end{definition}

It follows immediately from the definition that $$\dim^k(A \oplus B) = \dim^k(A)+\dim^k(B).$$

We will be concerned with evaluating the functions $\dim^k$ on submodules of the free Lie algebra $L(V)$. We write $BM$ for the module $\textrm{Im}(d)$ of boundaries in a differential module $(M,d)$.

\begin{lemma} \label{weightInequalities} Let $V$ be an acyclic differential $\Zp$-vector space. For all $k \in \mathbb{N}$ we have: \begin{itemize}
    \item $\dim^k(HL(V)) < \frac{1}{p} \dim^k(L(V))$, and 
    \item $\dim^k(BL(V)) > \frac{p-1}{2p} \dim^k(L(V))$.
\end{itemize} \end{lemma}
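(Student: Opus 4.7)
The plan is to leverage Proposition \ref{acyclicHomology} to compute the weighted dimensions $\dim^k(HL(V))$, $\dim^k(K)$, and $\dim^k(BL(V))$ explicitly from the acyclic basis $\{x_\alpha, y_\alpha, z_\beta, w_\beta\}$ of $K$, then exploit the geometric series identity $\sum_{j \geq 1} p^{-j} = \frac{1}{p-1}$ to compare them. The additivity $\dim^k(L(V)) = \dim^k(HL(V)) + \dim^k(K)$, immediate from the direct sum decomposition, will then convert a bound on the ratio $\dim^k(HL(V))/\dim^k(K)$ into the desired bound on $\dim^k(HL(V))/\dim^k(L(V))$.

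First I would unpack the weights. Writing $w_\alpha$ for $\wt(x_\alpha)$, the basis elements $\tau_j(x_\alpha)$ and $\sigma_j(x_\alpha)$ of $HL(V)$ both have weight $p^j w_\alpha$, so the contribution of the index $\alpha$ to $\dim^k(HL(V))$ equals $\sum_{j \geq 1,\, p^j w_\alpha \leq k} \frac{2}{p^j w_\alpha}$, which is strictly less than $\frac{2}{w_\alpha(p-1)}$ since no (possibly empty) finite partial sum attains the full geometric series. Meanwhile $d$ preserves weight, so the pair $(x_\alpha, y_\alpha)$ contributes exactly $\frac{2}{w_\alpha}$ to $\dim^k(K)$ when $w_\alpha \leq k$, and the $(z_\beta, w_\beta)$ contribute nonnegatively. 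Summing the strict inequalities over $\alpha$ (and noting the non-degeneracy $\dim^k(L(V)) > 0$ which follows from $V = L(V)^1$ being nontrivial) yields $\dim^k(HL(V)) < \frac{1}{p-1}\dim^k(K)$. Substituting $\dim^k(K) = \dim^k(L(V)) - \dim^k(HL(V))$ and rearranging produces the first claim $\dim^k(HL(V)) < \frac{1}{p}\dim^k(L(V))$.

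For the second inequality, I would observe that $HL(V)$ sits inside the cycles by construction, so $d$ vanishes on $HL(V)$ and hence $BL(V) = BK$. The acyclic basis exhibits $BK$ as freely spanned by $\{y_\alpha\} \cup \{w_\beta\}$, and weight-preservation of $d$ gives $\wt(y_\alpha) = w_\alpha$ and $\wt(w_\beta) = \wt(z_\beta)$. Therefore $\dim^k(BL(V)) = \frac{1}{2}\dim^k(K)$, and combining with $\dim^k(K) > \frac{p-1}{p}\dim^k(L(V))$ (which is a rearrangement of the first claim) yields the second. The only real bookkeeping subtlety is ensuring the strict inequality survives the case where $k$ is small enough that $HL(V)$ has no contribution in weight $\leq k$, but this is immediate because a finite (possibly empty) truncation of $\sum_{j \geq 1} p^{-j}$ is always strictly smaller than $\frac{1}{p-1}$.
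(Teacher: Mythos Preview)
Your proposal is correct and follows essentially the same route as the paper's proof: both use the acyclic basis from Proposition \ref{acyclicHomology}, bound the contribution of each $\alpha$ to $\dim^k(HL(V))$ by the geometric series $\sum_{j\geq 1} \frac{2}{p^j w_\alpha} = \frac{2}{(p-1)w_\alpha}$, compare against the contribution $\frac{2}{w_\alpha}$ of the pair $(x_\alpha,y_\alpha)$ to $\dim^k(K)$, and then convert via additivity. Your treatment of the second inequality is slightly sharper than the paper's (you observe $BL(V)=BK$ and hence $\dim^k(BL(V)) = \tfrac{1}{2}\dim^k(K)$ exactly, whereas the paper records only $\geq$), and you are a bit more explicit about the strict-inequality edge case, but the arguments are otherwise the same.
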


\begin{proof} Decompose $L(V) \cong HL(V) \oplus K$ as in Proposition \ref{acyclicHomology}, and choose a basis $\{x_\alpha, y_\alpha, z_\beta, w_\beta\}$ of $K$ as in Remark \ref{basisChoice}, where $\alpha$ and $\beta$ run over indexing sets $\mathscr{I}$ and $\mathscr{J}$ respectively. The differential preserves weight, so by choosing such a basis in each weighted component separately, we may assume that the basis vectors are homogenous in weight. Let $S_k$ be the set of those $\alpha \in \mathscr{I}$ with $\wt(x_\alpha) \leq k$. Proposition \ref{acyclicHomology} gives that $$\dim^k(HL(V)) < \sum_{\alpha \in S_k} \sum_{j=1}^\infty \frac{1}{ \wt(\tau_j(x_\alpha))} + \frac{1}{\wt(\sigma_j(x_\alpha))} = \sum_{\alpha \in S_k} \sum_{j=1}^\infty \frac{1}{p^j \wt(x_\alpha)} + \frac{1}{p^j \wt(x_\alpha)}$$ $$= \sum_{\alpha \in S_k} \frac{2}{\wt(x_\alpha)} \sum_{j=1}^{\infty} \frac{1}{p^j} = \frac{1}{p-1} \sum_{\alpha \in S_k} \frac{2}{\wt(x_\alpha)}.$$ On the other hand, the contribution of the $x_\alpha$ and $y_\alpha$ to the dimension of $K$ gives that $$\dim^k(K) \geq \sum_{\alpha \in S_k} \frac{2}{\wt(x_\alpha)},$$ so $$\dim^k(K) > (p-1) \dim^k(HL(V)).$$ Since $L(V) \cong HL(V) \oplus K$, we have that $\dim^k(L(V)) = \dim^k(K) + \dim^k(HL(V))$, so $$\dim^k(L(V)) > p \dim^k(HL(V)),$$ which proves the first inequality. This also implies that $\dim^k(K) > \frac{p-1}{p} \dim^k(L(V))$, and since $K$ is acyclic, we must have $$\dim^k(BL(V)) \geq \frac{1}{2} \dim^k(K).$$ Combining these proves the second inequality and completes the proof. \end{proof}

All we will require for our application is the case when $V$ is the free $\Zpr$-module on two generators $x$ and $y$ satisfying $d(x)=y$. In this case we will write $L(x,dx)=L(V)$ and $L'(x,dx)=L'(V)$. Note that $L(x,dx) \otimes \Zps$ is the free Lie algebra on $V \otimes \Zps$, which is the free differential module over $\Zps$ on $x$ and $y$ with $dx=y$.

\begin{lemma} \label{ManyBoundaries} Let $V$ be a graded acyclic $\Zpr$-module, free and finitely generated in each dimension, of total dimension at least 2. Then the module of boundaries $BL(V)$ is $\Zpr$-hyperbolic. In particular, the module of boundaries $BL(x,dx)$ in the free differential Lie algebra $L(x,dx)$ is $\Zpr$-hyperbolic. \end{lemma}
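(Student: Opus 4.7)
The plan is to reduce to the $\Zp$-coefficient statement already contained in Lemma \ref{weightInequalities}, and then transfer the count back to $\Zpr$ using freeness of $L(V)$ as a $\Zpr$-module. By choosing an acyclic basis of $V$ (as in Remark \ref{basisChoice}) and restricting to a single acyclic pair, I may without loss of generality assume $V$ is the free $\Zpr$-module on two homogeneous generators $x, y$ with $d(x) = y$: the obvious projection $L(V) \to L(\Zpr\{x,y\})$ commutes with $d$, so the restricted $BL(V)$ sits as a $\Zpr$-module summand of the original $BL(V)$ in each degree, and it suffices to prove the restricted statement.

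Since $V$ is free over $\Zpr$, the free Lie algebra $L(V)$ is free over $\Zpr$ in each weight (of rank $W_2(k)$ in weight $k$, by Theorem \ref{gradedWittCount}), so the canonical map identifies $L(V) \otimes \Zp$ with $L(V \otimes \Zp)$ as a differential graded Lie algebra. The translation from $\Zpr$- to $\Zp$-counting is: for any finitely generated $\Zpr$-module $M$, $\dim_{\Zpr}(M) = \dim_{\Zp}(p^{r-1}M)$, because $p^{r-1}\Zpr \cong \Zp$ while $p^{r-1}\Zpt = 0$ for $t < r$. Freeness of $L(V)$ turns multiplication by $p^{r-1}$ into a $\Zp$-linear isomorphism $L(V) \otimes \Zp \xrightarrow{\cong} p^{r-1}L(V)$ intertwining the differentials, and this identifies $p^{r-1}BL(V) = d(p^{r-1}L(V))$ with $BL(V \otimes \Zp)$ in each internal degree, giving
$$\dim_{\Zpr}((BL(V))_i) = \dim_{\Zp}((BL(V \otimes \Zp))_i).$$

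Now Lemma \ref{weightInequalities} applied to the acyclic $\Zp$-vector space $V \otimes \Zp$ gives $\dim^k(BL(V \otimes \Zp)) > \tfrac{p-1}{2p}\dim^k(L(V \otimes \Zp))$, and the Witt asymptotics (Lemma \ref{WittAsymptotics}) show $W_2(k) \sim 2^k/k$, so $\dim^k(L(V \otimes \Zp))$ grows exponentially in $k$. Taking successive differences $\dim^k - \dim^{k-1} = \dim(BL(V \otimes \Zp))^k / k$ shows that the weight-$k$ component itself grows exponentially in $k$, and since $x, y$ have bounded internal degree, an element of weight $k$ has internal degree bounded linearly in $k$; exponential growth in weight therefore translates to exponential growth of partial sums in internal degree. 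Combined with the displayed identification, this gives $\Zpr$-hyperbolicity of $BL(V)$. The main obstacle I expect is verifying cleanly that freeness of $L(V)$ over $\Zpr$ does yield the isomorphism $p^{r-1}BL(V) \cong BL(V \otimes \Zp)$ respecting the differential; once this is in place, the rest is book-keeping around the inequalities already established in Lemma \ref{weightInequalities}.
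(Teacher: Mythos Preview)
Your approach is essentially the paper's: reduce to $r=1$ (the paper cites Lemma \ref{reduceToField}, your $p^{r-1}$ argument is an equivalent direct version), then invoke Lemma \ref{weightInequalities} and translate weight growth to internal-degree growth via the Witt asymptotics. Your preliminary reduction to two generators is an extra step the paper does not take, but it is harmless.

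There is one genuine gap. From the inequality $\dim^k(BL) > \tfrac{p-1}{2p}\dim^k(L)$ you correctly conclude that $\dim^k(BL)$ grows exponentially in $k$, but you then pass to successive differences $\dim^k(BL)-\dim^{k-1}(BL)=\dim(BL)^k/k$ and assert that the weight-$k$ component itself grows exponentially. That inference is invalid: exponential growth of a sequence of partial (weighted) sums does not force exponential growth of the increments. The fix is to skip this detour entirely. Since each summand of $\dim^k(BL)=\sum_{i\leq k}\dim(BL)^i/i$ is obtained by dividing a nonnegative integer by $i\geq 1$, you have
\[
\sum_{i\leq k}\dim(BL)^i \ \geq\ \dim^k(BL) \ >\ \tfrac{p-1}{2p}\,\dim^k(L) \ \geq\ \tfrac{p-1}{2pk}\,\dim(L)^k \ =\ \tfrac{p-1}{2pk}\,W_2(k),
\]
and since every element of weight $i$ has internal degree at most $n i$ (with $n=\deg x$), the left-hand side is bounded above by $\sum_{j\leq nk}\dim(BL)_j$. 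This is exactly how the paper completes the argument, and it gives the required exponential growth in internal degree without any claim about individual weight components.
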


\begin{proof} Since it has the correct universal property, $L(V) \otimes \Zp$ is the free Lie algebra over $\Zp$ on $V \otimes \Zp$. Thus, by Lemma \ref{reduceToField} applied to the differential $d$ it suffices to prove the $r=1$ case, for which we can use Proposition \ref{acyclicHomology}, in the guise of Lemma \ref{weightInequalities}.

By Lemma \ref{weightInequalities}, we know that $$\dim^k(BL(V)) > \frac{p-1}{2p} \dim^k(L(V)).$$ Thus, $$\sum_{i=1}^k \dim(BL(V)^i) \geq \sum_{i=1}^k \frac{\dim(BL(V)^i)}{i} > \frac{p-1}{2p} \sum_{i=1}^k \frac{\dim(L(V)^i)}{i} \geq \frac{p-1}{2p} \sum_{i=1}^k \frac{\dim(L(V)^i)}{k}.$$

Let $n$ be the maximum $i$ for which $V_i \neq 0$. The leftmost term is equal to $\dim(\bigoplus_{i=1}^k BL(V)^i)$, and $BL(V)^i \subset L(V)^i \subset L(V)_{ni}$, so we have $$\dim(\bigoplus_{j=1}^{nk} BL(V)_j) > \frac{p-1}{2pk} \sum_{i=1}^k \dim(L(V)^i) \geq \frac{p-1}{2pk} \dim(L(V)^k) = \frac{p-1}{2pk} W_\ell(k),$$ by Theorem \ref{gradedWittCount}, where we let $\ell = \dim(V)$, so $$\dim(\bigoplus_{i=1}^{k} BL(V)_i) > \frac{p-1}{2p \lfloor \frac{k}{n} \rfloor} W_\ell(\lfloor \frac{k}{n} \rfloor) \sim \frac{p-1}{2p \lfloor \frac{k}{n} \rfloor^2} \ell^{\lfloor \frac{k}{n} \rfloor}$$ by Lemma \ref{WittAsymptotics}. Now, $\ell$ is assumed greater than $1$, so
$$\frac{p-1}{2p \lfloor \frac{k}{n} \rfloor^2} \ell^{\lfloor \frac{k}{n} \rfloor} \geq \frac{p-1}{2p (\frac{k}{n})^2} \ell^{\frac{k}{n} -1},$$ so for any $\varepsilon > 0$, once $k$ is large enough we have $\dim(\bigoplus_{i=1}^{k} BL(V)_i) > (\ell^{\frac{1}{n}} - \varepsilon)^{k}$. That is, $\dim(\bigoplus_{i=1}^{k} BL(V)_i)$ grows faster than an exponential in any base smaller than $\ell^{\frac{1}{n}}$. In particular, if $\dim(V) = \ell \geq 2$, then $BL(V)$ is $\Zp$-hyperbolic, as required. \end{proof}

Since $\quot : L'(V) \longrightarrow L(V)$ is surjective and commutes with $d$, we immediately obtain the following corollary.

\begin{corollary} \label{mbCor} The submodule $\textrm{Im}(\quot \circ d)$ in the free differential Lie algebra $L(x,dx)$ is $\Zpr$-hyperbolic. \qed \end{corollary}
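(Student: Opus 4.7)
The plan is to observe that $\textrm{Im}(\quot \circ d)$ is, in fact, literally equal to the module of boundaries $BL(x,dx)$ already handled in Lemma \ref{ManyBoundaries}, so no new work is needed beyond unwinding definitions.

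First I would record that $\quot : L'(V) \longrightarrow L(V)$ is a chain map, i.e.\ $\quot \circ d = d \circ \quot$. This is built into the construction: the differential on $L(V)$ was defined so that the derivation extending $d$ on $L'(V)$ descends through the Lie relations, and that descent is exactly the statement that $\quot$ intertwines the two differentials. Using this intertwining together with the surjectivity of $\quot$, we compute
\[
\textrm{Im}(\quot \circ d) \;=\; \textrm{Im}(d \circ \quot) \;=\; d\bigl(\textrm{Im}(\quot)\bigr) \;=\; d\bigl(L(x,dx)\bigr) \;=\; BL(x,dx).
\]

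Now I apply Lemma \ref{ManyBoundaries} to the two-dimensional acyclic differential $\Zpr$-module $V = \langle x, dx \rangle$: its conclusion is precisely that $BL(x,dx)$ is $\Zpr$-hyperbolic. Combining with the identification above finishes the proof. There is no real obstacle here; the genuine content, namely the exponential-growth estimate on boundaries driven by the Cohen--Moore--Neisendorfer description of $HL(V)$ and the Witt asymptotics, was already absorbed into Lemma \ref{ManyBoundaries}, and Corollary \ref{mbCor} is only a transport of that conclusion from $L(V)$ back along the surjection $\quot$.
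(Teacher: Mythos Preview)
Your proof is correct and is essentially identical to the paper's: the paper notes that since $\quot$ is surjective and commutes with $d$, the corollary follows immediately from Lemma \ref{ManyBoundaries}, which is exactly the chain-of-equalities $\textrm{Im}(\quot \circ d) = d(L(x,dx)) = BL(x,dx)$ that you spelled out.
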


\section{Loop-homology of Moore spaces} \label{FoundationSection}

In this section we will study the question `what part of $H_*(\Omega P^{n+1}(p^r); \Zpr)$ can be shown to consist of Hurewicz images?' The answer is `the module of boundaries in a differential sub-Lie algebra isomorphic to $L(x,dx)$'. In Section \ref{LieSection} we have seen that such a module is $\Zpr$-hyperbolic. The hypotheses of Theorem \ref{HPrelim} are really conditions under which the image of this module under the map $(\Omega \mu)_*$ remains $\Zps$-hyperbolic, and we thus obtain a $\Zps$-hyperbolic submodule of the image of the Hurewicz map.

We follow the notation from Neisendorfer's book \cite{NeisendorferBook}. Let $p$ be a prime and let $s \leq r \in \mathbb{N}$. For a space $Y$, recall that the \textit{homotopy groups of $Y$ with coefficients in $\Zps$}, denoted $\pi_n(Y;\Zps)$ are the based homotopy sets $[P^n(p^s),Y]$, which are groups for $n \geq 3$. There are a number of useful operations relating the integral and mod-$p^s$ homotopy groups, which we introduce next.

Let $\underline{\beta}^s :S^{n-1} \longrightarrow P^n(p^s)$ be the inclusion from the cofibration sequence of Definition \ref{MooreSpace}. This defines a map of degree $-1$ $$\beta^s: \pi_n(Y;\Zps) \longrightarrow \pi_{n-1}(Y)$$ $$f \longmapsto f \circ \underline{\beta}^s.$$

Similarly, let $\underline{\rho}^s :P^n(p^s) \longrightarrow S^n$ be the pinch map, which is obtained by extending the cofibration sequence of Definition \ref{MooreSpace} to the right. Again, this defines a map of degree $0$ $$\rho^s: \pi_n(Y) \longrightarrow \pi_n(Y;\Zps)$$ $$f \longmapsto f \circ \underline{\rho}^s.$$

Lastly, let $\underline{\textrm{red}}^{r,s}: P^n(p^s) \longrightarrow P^n(p^r)$ be the map defined by the diagram of cofibrations \begin{center}
\begin{tabular}{c}
\xymatrix{
S^{n-1} \ar^{p^s}[r] \ar@{=}[d] & S^{n-1} \ar[r] \ar^{p^{r-s}}[d] & P^{n}(p^s) \ar^{\underline{\textrm{red}}^{r,s}}[d] \\
S^{n-1} \ar^{p^r}[r] & S^{n-1} \ar[r] & P^{n}(p^r),
}
\end{tabular}
\end{center} and let $$\textrm{red}^{r,s}: \pi_n(Y;\Zpr) \longrightarrow \pi_n(Y;\Zps)$$ $$f \longmapsto f \circ \underline{\textrm{red}}^{r,s}.$$

It follows from the definitions that $\beta^s$, $\rho^s$ and $\textrm{red}^{r,s}$ are all natural in $Y$.

We will now use these operations to produce elements $u$ and $v$ of $\pi_*(\Omega P^{n+1}(p^r); \Zps)$. The Hurewicz images of $v$ and $u$ will play the roles of the elements $x$ and $dx$ of Section \ref{LieSection}. Although these elements are easily described in terms of things we already have, we will give them new names for clarity.

Let $$v': P^n(p^s) \longrightarrow P^n(p^r)$$ be equal to $\underline{\textrm{red}}^{r,s}$.

Let $$u': P^{n-1}(p^s) \longrightarrow P^n(p^r)$$ be the composite $$P^{n-1}(p^s) \xrightarrow{\underline{\rho}^s} S^{n-1} \xrightarrow{\underline{\beta}^r} P^{n}(p^r).$$

Recall that for any space $X$ there is a natural map $\eta: X \longrightarrow \Omega \Sigma X$, which is the unit of the adjunction $\Sigma \dashv \Omega$ and sends $x \in X$ to the loop $\gamma_x = (t \longmapsto \langle t,x \rangle)$ on $\Sigma X$. Let $v = \eta \circ v' : P^n(p^s) \longrightarrow \Omega P^{n+1}(p^r)$, and let $u = \eta \circ u':  P^{n-1}(p^s) \longrightarrow \Omega P^{n+1}(p^r)$.

Now let $G$ be an $H$-group, and suppose that the prime $p$ is odd. As in the integral setting, the homotopy groups with coefficients $\pi_*(G; \Zps)$ carry a \textit{Samelson product}; a bilinear operation which resembles a Lie bracket \cite{CMNTorsion}. In particular, loop spaces are $H$-groups, so we have Samelson products in $\pi_*(\Omega X;\Zps)$ for any $X$.

\begin{lemma} \label{notQuiteGLA} Let $p$ be an odd prime. The map $$\pi_*(\Omega X;\Zps) \xrightarrow{\beta^s} \pi_*(\Omega X) \xrightarrow{\rho^s} \pi_*(\Omega X;\Zps)$$ is a differential (that is, $(\rho^s \circ \beta^s)^2=0$) of degree $-1$, which satisfies the Leibniz identity relative to Samelson products. \end{lemma}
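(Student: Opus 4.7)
The statement has three parts: the composite $\partial := \rho^s \circ \beta^s$ (i) has degree $-1$, (ii) squares to zero, and (iii) is a derivation with respect to the Samelson product. The plan is to dispatch (i) and (ii) by direct expansion of the definitions, and to treat (iii) by reduction to the behavior of the smash of two Moore spaces. Throughout, $\partial$ should be thought of as a Bockstein-type operation, and this informs all three proofs.

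Part (i) is immediate: $\underline{\beta}^s : S^{n-1} \to P^n(p^s)$ shifts dimension down by one and $\underline{\rho}^s : P^n(p^s) \to S^n$ preserves dimension, so the precomposition operation $\partial$ lowers homotopy degree by one.

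For (ii), I unfold the definitions. Given $f : P^n(p^s) \to \Omega X$ representing an element of $\pi_n(\Omega X;\Zps)$, we have
$$\partial^2 f = f \circ \underline{\beta}^s \circ \bigl(\underline{\rho}^s \circ \underline{\beta}^s\bigr) \circ \underline{\rho}^s,$$
where the inner factor $\underline{\rho}^s \circ \underline{\beta}^s : S^{n-2} \to S^{n-1}$ consists of two consecutive maps in the Puppe cofibration sequence
$$S^{n-2} \xrightarrow{p^s} S^{n-2} \xrightarrow{\underline{\beta}^s} P^{n-1}(p^s) \xrightarrow{\underline{\rho}^s} S^{n-1},$$
hence is null-homotopic. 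Therefore $\partial^2 f \simeq *$.

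For (iii), I would use the description of the Samelson product as a map out of a smash product. Given $f : P^n(p^s) \to \Omega X$ and $g : P^m(p^s) \to \Omega X$, the Samelson bracket $\langle f,g \rangle$ is induced, via loop commutators, by a map out of $P^n(p^s) \wedge P^m(p^s)$, which by Lemma \ref{Smashing} splits as $P^{n+m}(p^s) \vee P^{n+m-1}(p^s)$ (using $p$ odd). The operation $\underline{\beta}^s \circ \underline{\rho}^s$ is natural in the source, and one verifies on this splitting that its effect on the commutator map is precisely the Leibniz expansion $\partial \langle f,g\rangle = \langle \partial f,g\rangle + (-1)^{\deg f}\langle f,\partial g\rangle$: the two summands on the right correspond to applying $\underline{\beta}^s\circ\underline{\rho}^s$ to one smash factor while keeping the other fixed. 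Alternatively, this is precisely what is developed in Neisendorfer \cite{NeisendorferBook}, where $\partial$ is identified as a Bockstein satisfying the derivation property with respect to Samelson products. The main obstacle is sign-tracking, stemming from the odd (total) degree of $\partial$ together with the graded-commutativity of the Samelson bracket; this is handled most transparently by the explicit splitting of the smash of two Moore spaces together with naturality of $\underline{\beta}^s$ and $\underline{\rho}^s$.
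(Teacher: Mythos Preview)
Your proposal is correct and follows essentially the same approach as the paper. For parts (i) and (ii), both you and the paper reduce $(\rho^s\circ\beta^s)^2=0$ to the vanishing of two consecutive maps in the defining cofibration sequence of the Moore space (the paper phrases this as $\beta^s\circ\rho^s=0$, you group the factors differently, but the underlying fact is identical); for part (iii), the paper simply cites \cite[Section 7]{CMNTorsion} for the Leibniz identity, which is the same source your sketch via the smash splitting and your citation of \cite{NeisendorferBook} are ultimately drawing on.
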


\begin{proof} By \cite[Section 7]{CMNTorsion}, we have the Leibniz identity. To see that it is a differential, note that $\beta^s \circ \rho^s = 0$, so $(\rho^s \circ \beta^s)^2= \rho^s \circ (\beta^s \circ \rho^s) \circ \beta^s = 0$. \end{proof}

By construction of $u$ and $v$ we have $(\rho^s \circ \beta^s)(v)=p^{r-s}u$ in $\pi_*(\Omega P^{n+1}(p^r); \Zps)$. Let $L'(x,dx)$ and $L(x,dx)$ be as in Section \ref{LieSection}, where we let $\deg(x) = n$ and $\deg(y)=n-1$. Let $\langle x, dx \rangle$ be the free graded $\Zpr$-module of dimension 2 on basis $\{x, dx\}$, so that $L(x,dx) = L( \langle x,dx \rangle)$, and with this notation note that $L'(x,dx) \otimes \Zps = L'(\langle x,dx \rangle \otimes \Zps)$, the analogous construction over $\Zps$.

We define a map of $\Zps$-modules $\phi_\pi^{r,s} : \langle x, dx \rangle \otimes \Zps \longrightarrow \pi_*(\Omega P^{n+1}(p^r); \Zps)$ by sending $x \longmapsto v$ and $dx \longmapsto u$. Samelson products in $\pi_*(\Omega P^{n+1}(p^r); \Zps)$ are bilinear, so by the universal property of $L'(x,dx) \otimes \Zps$, $\phi_\pi^{r,s}$ extends to a map $$\PhiPi^{r,s} : L'(x,dx) \otimes \Zps \longrightarrow \pi_*(\Omega P^{n+1}(p^r); \Zps)$$ of graded (nonassociative) $\Zps$-algebras.

The following lemma relates $\PhiPi^{r,s}$ to $\PhiPi^{r,r}$.

\begin{lemma} \label{daFactor} If $s \leq r$ then $p^{r-s} \PhiPi^{r,s} \circ d = \rho^s \circ \beta^s \circ \PhiPi^{r,s}$. In particular, if $s=r$, then $\PhiPi^{r,s} = \PhiPi^{r,r}$ is a map of differential Lie algebras. \end{lemma}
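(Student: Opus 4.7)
The identity $p^{r-s}\PhiPi^{r,s}\circ d = \rho^s \circ \beta^s \circ \PhiPi^{r,s}$ equates two $\Zps$-linear maps $L'(x,dx)\otimes\Zps \to \pi_*(\Omega P^{n+1}(p^r);\Zps)$ of degree $-1$. My plan is to verify it on the generators $x$ and $dx$, and then upgrade to all of $L'(x,dx)\otimes\Zps$ by an induction on bracket length, the induction step being powered by a common Leibniz rule satisfied by both sides.

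Writing $A = \rho^s\circ\beta^s\circ\PhiPi^{r,s}$ and $B = p^{r-s}\PhiPi^{r,s}\circ d$, I would first check that for homogeneous $a,b\in L'(x,dx)\otimes\Zps$ and for $F\in\{A,B\}$,
\[ F[a,b] = [F(a),\PhiPi^{r,s}(b)] + (-1)^{\deg(a)}[\PhiPi^{r,s}(a),F(b)]. \]
For $B$, this follows because $d$ is a derivation on the free Lie algebra and $\PhiPi^{r,s}$ preserves brackets, together with bilinearity to slide $p^{r-s}$ into either Samelson factor. For $A$, this is the combination of Lemma \ref{notQuiteGLA} (that $\rho^s\beta^s$ is a derivation for Samelson products) with the fact that $\PhiPi^{r,s}$ preserves brackets. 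Once this common Leibniz rule is in hand, induction on bracket length reduces the lemma to the generators.

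For the generator computations: by construction $B(x) = p^{r-s}u$, while $\beta^s(v) = v\circ\underline{\beta}^s = \eta\circ\underline{\textrm{red}}^{r,s}\circ\underline{\beta}^s$, which by the defining diagram of $\underline{\textrm{red}}^{r,s}$ equals $\eta\circ\underline{\beta}^r\circ p^{r-s}$; applying $\rho^s$ (i.e. precomposing with $\underline{\rho}^s$) and using the $H$-group structure on $\Omega P^{n+1}(p^r)$ to pull $p^{r-s}$ outside then gives $A(x) = p^{r-s}u$. On the other generator, $B(dx)=p^{r-s}\PhiPi^{r,s}(0) = 0$, and $\beta^s(u) = \eta\circ\underline{\beta}^r\circ\underline{\rho}^s\circ\underline{\beta}^s$ vanishes because $\underline{\rho}^s\circ\underline{\beta}^s$ is a composition of two consecutive maps in a cofiber sequence, so $A(dx)=0$ as well. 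The ``in particular'' clause is then immediate: setting $s=r$ eliminates the factor $p^{r-s}$, which is exactly the statement that $\PhiPi^{r,r}$ commutes with the differentials, and combined with preservation of brackets this makes $\PhiPi^{r,r}$ a map of differential (non-associative) Lie algebras.

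I do not foresee a serious conceptual obstacle. The only thing that requires care is sign-bookkeeping: one must confirm that the sign in Lemma \ref{notQuiteGLA}'s Leibniz identity for $\rho^s\beta^s$ over Samelson products matches the sign in the Leibniz identity for $d$ on the free graded Lie algebra, so that the two expressions $F[a,b]$ really coincide in the inductive step. Assuming the conventions in \cite{CMNTorsion} are those used by Neisendorfer throughout, this is automatic, but it is worth an explicit check.
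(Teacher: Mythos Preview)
Your proposal is correct and follows essentially the same approach as the paper: induction on bracket length, with the inductive step powered by the Leibniz identity for $\rho^s\circ\beta^s$ from Lemma \ref{notQuiteGLA} together with the fact that $\PhiPi^{r,s}$ preserves brackets, and the base case reducing to the identity $(\rho^s\circ\beta^s)(v)=p^{r-s}u$. Your explicit verification on the second generator $dx$ (via $\underline{\rho}^s\circ\underline{\beta}^s\simeq *$) is a detail the paper leaves implicit, and your caveat about matching sign conventions is prudent but, as you suspect, resolves automatically under the conventions of \cite{CMNTorsion}.
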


\begin{proof} It suffices to show that the composites $p^{r-s} \PhiPi^{r,s} \circ d$ and $\rho^s \circ \beta^s \circ \PhiPi^{r,s}$ agree on brackets of length $k$ in $L'(x,dx) \otimes \Zps$ for each $k \in \mathbb{N}$. We will do this by induction.

In the case $k=1$, the restriction of $\PhiPi^{r,s}$ to brackets of length 1 is $\phi_\pi^{r,s}$. By construction of $u$ and $v$ we have $(\rho^s \circ \beta^s)(v)=p^{r-s}u$ in $\pi_*(\Omega P^{n+1}(p^r); \Zps)$, so $\phi_\pi^{r,s}$ satisfies $p^{r-s} \phi_\pi^{r,s} \circ d = \rho^s \circ \beta^s \circ \phi_\pi^{r,s}$, as required.

Now let $a \in L'(x,dx) \otimes \Zps$ be a bracket of length $k>1$. We have $a=[b,c]$ for brackets $b$, $c$ of lengths $i$ and $j$ respectively with $i+j=k$, $i<k$, $j<k$. Thus $$ \rho^s \circ \beta^s \circ \Phi_\pi^{r,s} (a) = \rho^s \circ \beta^s \circ \Phi_\pi^{r,s} ([b,c]) = \rho^s \circ \beta^s ([\Phi_\pi^{r,s} (b), \Phi_\pi^{r,s} (c)]) $$ $$ =  [\rho^s \circ \beta^s \circ \Phi_\pi^{r,s}( b), \Phi_\pi^{r,s} (c)] +(-1)^{\deg b }  [ \Phi_\pi^{r,s} (b), \rho^s \circ \beta^s \circ \Phi_\pi^{r,s} (c)], $$ where the last equality is by Lemma \ref{notQuiteGLA}. By induction we have $\rho^s \circ \beta^s \circ \Phi_\pi^{r,s}( b) = p^{r-s} \PhiPi^{r,s} \circ d(b)$ and $\rho^s \circ \beta^s \circ \Phi_\pi^{r,s}( c) = p^{r-s} \PhiPi^{r,s} \circ d(c)$, so the above is equal to $$ [p^{r-s} \PhiPi^{r,s} \circ d( b), \phi_\pi^{r,s} (c)] +(-1)^{\deg b }  [ \Phi_\pi^{r,s} (b), p^{r-s} \PhiPi^{r,s} \circ d (c)] = p^{r-s} \PhiPi^{r,s}( [  d( b), c] +(-1)^{\deg b }  [  b,  d (c)])$$ $$ = p^{r-s} \PhiPi^{r,s} \circ d( [  b, c]). $$

This completes the induction, and hence the proof. \end{proof} 

Lemma \ref{daFactor} identifies a factor of $p^{r-s}$. The next lemma makes precise the idea that this factor comes from the map $\beta^s$, rather than the map $\rho^s$, by relating each $\PhiPi^{r,s}$ to $\PhiPi^{r,r}$.

\begin{lemma} \label{PhiPiDiffs} The following diagram commutes: \begin{center}
\begin{tabular}{c}
\xymatrix{
L'(x,dx) \ar_{d}[dd] \ar^{\PhiPi^{r,r}}[r] & \pi_*(\Omega P^{n+1}(p^r); \Zpr) \ar^{\beta^s}[d] \\
& \pi_*(\Omega P^{n+1}(p^r)) \ar^{\rho^s}[d] \\
L'(x,dx) \otimes \Zps \ar^{\PhiPi^{r,s}}[r] & \pi_*(\Omega P^{n+1}(p^r); \Zps). \\
}
\end{tabular}
\end{center} In particular, $\textrm{Im}(\rho^s) \supset \textrm{Im}(\PhiPi^{r,s} \circ d)$. \end{lemma}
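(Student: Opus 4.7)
The plan is to mimic the inductive proof of Lemma \ref{daFactor}: first verify commutativity on the generators $x$ and $dx$ of $L'(x,dx)$, then extend to all brackets by induction on weight.

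For the base case I compute $\PhiPi^{r,r}(x)=v_r=\eta\colon P^n(p^r)\to\Omega P^{n+1}(p^r)$, so that $\rho^s\circ\beta^s\circ\PhiPi^{r,r}(x)=\eta\circ\underline{\beta}^r\circ\underline{\rho}^s=u_s=\PhiPi^{r,s}(dx)=\PhiPi^{r,s}\circ d(x)$, as required. For the generator $dx$, I have $\PhiPi^{r,r}(dx)=u_r=\eta\circ\underline{\beta}^r\circ\underline{\rho}^r$, and $\beta^s\circ\PhiPi^{r,r}(dx)$ contains the composite $\underline{\rho}^r\circ\underline{\beta}^r$ of two successive maps in the cofibre sequence defining $P^n(p^r)$, which is null. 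Hence $\rho^s\circ\beta^s\circ\PhiPi^{r,r}(dx)=0=\PhiPi^{r,s}\circ d(dx)$, using $d^2=0$ on $L'(x,dx)$.

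For the inductive step on a bracket $a=[b,c]$ of weight $k>1$ with $b,c$ of smaller weight, I expand both sides using: (i) $\PhiPi^{r,r}$ and $\PhiPi^{r,s}$ preserve the bracket structure by construction; (ii) $d$ is a derivation on $L'(x,dx)$; and (iii) a Leibniz-type identity for $\rho^s\circ\beta^s$ applied to Samelson products of classes in $\pi_*(\Omega P^{n+1}(p^r);\Zpr)$. Applying the inductive hypothesis to $b$ and $c$ and combining these ingredients then yields the identity on $a$, exactly as in the inductive step of Lemma \ref{daFactor}. The "in particular" statement $\textrm{Im}(\rho^s)\supset\textrm{Im}(\PhiPi^{r,s}\circ d)$ follows immediately from the factorization $\PhiPi^{r,s}\circ d=\rho^s\circ\beta^s\circ\PhiPi^{r,r}$.

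The main obstacle is (iii): we need a Leibniz identity for $\rho^s\circ\beta^s$ applied to Samelson products of $\Zpr$-classes. Lemma \ref{notQuiteGLA} supplies such a rule on $\Zps$-coefficients, and the plan is to bridge the coefficient gap using the naturality of Samelson products under $\textrm{red}^{r,s}$, together with the relation between $\underline{\beta}^r$ and $\underline{\beta}^s$ arising from the defining diagram of $\underline{\textrm{red}}^{r,s}$. Careful tracking of the resulting factors of $p^{r-s}$ is the delicate bookkeeping that makes the identity come out cleanly, without the extra factor that appears in Lemma \ref{daFactor}.
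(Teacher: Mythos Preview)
Your inductive strategy is sensible, and the base case is correct (note that the map labelled $\beta^s$ in the diagram must be $\beta^r$, since $\beta^s$ does not even type-check on $\pi_*(-;\Zpr)$; the paper's proof and the diagram in Section~\ref{proof2Section} confirm this). But the route you sketch for the Leibniz identity in the inductive step does not close. Using the relation coming from the defining square of $\underline{\textrm{red}}^{r,s}$ on the $\beta$-side gives $\beta^s\circ\textrm{red}^{r,s}=p^{r-s}\beta^r$, so passing through Lemma~\ref{notQuiteGLA} at level $s$ produces an identity only after multiplying both sides by $p^{r-s}$. In a $\Zps$-module that factor cannot be cancelled, so the ``careful tracking of factors of $p^{r-s}$'' you allude to will not yield the clean identity you need.

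The paper sidesteps this completely: rather than re-running the induction at mixed coefficients, it applies Lemma~\ref{daFactor} once at $s=r$ (where there is no extra factor) to get $\PhiPi^{r,r}\circ d=\rho^r\circ\beta^r\circ\PhiPi^{r,r}$, and then post-composes with $\textrm{red}^{r,s}$. The two facts that make this work are that $\textrm{red}^{r,s}\circ\PhiPi^{r,r}=\PhiPi^{r,s}\circ(\text{quotient})$ (both are nonassociative algebra maps, so check on $x,dx$) and, crucially, that $\textrm{red}^{r,s}\circ\rho^r=\rho^s$, which comes from the $\rho$-side of the $\underline{\textrm{red}}^{r,s}$ diagram and introduces no power of $p$. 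Chaining these gives $\PhiPi^{r,s}\circ d=\rho^s\circ\beta^r\circ\PhiPi^{r,r}$ directly. If you prefer to keep your inductive framework, you can salvage it by deriving the mixed Leibniz rule from the $\rho$-side instead: write $\rho^s\circ\beta^r=\textrm{red}^{r,s}\circ\rho^r\circ\beta^r$, apply Lemma~\ref{notQuiteGLA} at level $r$, and use that $\textrm{red}^{r,s}$ preserves Samelson products. You will also need the identity $\textrm{red}^{r,s}\circ\PhiPi^{r,r}=\PhiPi^{r,s}\circ(\text{quotient})$ to close the induction, at which point you are essentially reproducing the paper's argument inside your inductive step.
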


\begin{proof} By Lemma \ref{daFactor}, the top face of the following diagram commutes, and the bottom face commutes up to a factor of $p^{r-s}$, in the sense that $p^{r-s} \PhiPi^{r,s} \circ d = \rho^s \circ \beta^s \circ \PhiPi^{r,s}$:

\begin{center}
\begin{tabular}{c}
\xymatrix@C=0.1cm{
& \phantom{\Omega P^{n+1}(p^r)} & L'(x,dx) \ar^{\textrm{quotient}}[dd] \ar^{\PhiPi^{r,r}}[ddrr] \ar_{d}[ddll] & & \\
& & & & \\
L'(x,dx) \ar^{\textrm{quotient}}[dd] \ar^(.3){\PhiPi^{r,r}}[ddrr]  & & L'(x,dx) \otimes \Zps \ar'[dl]_{d}[ddll]  \ar'[dr]^{\PhiPi^{r,s}}[ddrr]& & \pi_*(\Omega P^{n+1}(p^r);\Zpr) \ar^{\textrm{red}^{r,s}}[dd] \ar_{\beta^r}[dl] \\
& \phantom{\ell} & & \pi_*(\Omega P^{n+1}(p^r)) \ar_{\rho^r}[dl] \ar@{=}[dd] &  \\
L'(x,dx) \otimes \Zps \ar^{\PhiPi^{r,s}}[ddrr] & & \pi_*(\Omega P^{n+1}(p^r); \Zpr) \ar^{\textrm{red}^{r,s}}[dd] & & \pi_*(\Omega P^{n+1}(p^r); \Zps) \ar_{\beta^s}[dl] \\
& & & \pi_*(\Omega P^{n+1}(p^r)) \ar_{\rho^s}[dl] & \\
& & \pi_*(\Omega P^{n+1}(p^r);\Zps). & & \\
}
\end{tabular}
\end{center}

Commutativity of the back left face is clear. We now check commutativity of the front left and back right faces, which are identical. Since the reduction map $\textrm{red}$ is a map of Lie algebras, both composites are maps of nonassociative algebras, and by the uniqueness part of the universal property of $L'(x,dx)$, it suffices to show that the restrictions to $\langle x, dx \rangle$ agree, and this is easily seen by direct calculation.

We now turn to the front right face. The square involving $\rho^s$ commutes, since the composite $$P^m(p^s) \xrightarrow{\underline{\textrm{red}}^{r,s}} P^m(p^r) \xrightarrow{\underline{\rho}^r} S^m $$ is equal to $\underline{\rho}^s: P^m(p^s) \longrightarrow S^m$. For the square involving $\beta^s$, we have that the composite $$S^{m-1} \xrightarrow{\underline{\beta}^s} P^m(p^s) \xrightarrow{\underline{\textrm{red}}^{r,s}} P^m(p^s)$$ is equal to $p^{r-s} \underline{\beta}^r : S^{m-1} \longrightarrow P^m(p^r)$.

Putting all of this together, we have that $$ \PhiPi^{r,s} \circ d \circ \textrm{quotient} = \textrm{red}^{r,s} \circ \PhiPi^{r,r} \circ d = \textrm{red}^{r,s} \circ \rho^r \circ \beta^r \circ \PhiPi^{r,r} = \rho^s \circ \beta^r \circ \PhiPi^{r,r},$$ as required. \end{proof}

Let $s \leq r$. The homology $\widetilde{H}_*(P^m(p^r);\Zps)$ is free over $\Zps$; in particular we have $$\widetilde{H}_i(P^m(p^r);\Zps) = \begin{cases} \Zps & i = m,m-1, \\
0  & \textrm{ otherwise.}
\end{cases} $$

Write $e_m$ for a choice of generator of $H_m(P^m(p^r);\Zps)$, and $s_{m-1} = \beta(e_m)$, where $\beta$ is the homology Bockstein. The group $H_{m-1}(P^m(p^r);\Zps)$ is generated by $s_{m-1}$.

The Pontrjagin product makes $\widetilde{H}_*(\Omega  P^{n+1}(p^r);\Zps)$ into a $\Zps$-algebra. Any graded associative algebra carries a Lie bracket, defined by setting $[x,y]=xy-(-1)^{\deg(x)\deg(y)}yx$, and this is what will be meant by `the bracket on $H_*(\Omega  P^{n+1}(p^r);\Zps)$'.

Recall that an element of $\pi_m(Y; \Zpr)$ is a homotopy class of maps $P^m(p^r) \longrightarrow Y$. Let $h: \pi_*(Y; \Zps) \longrightarrow H_*(Y;\Zps)$ be the \emph{Hurewicz map}, which sends $f \in \pi_*(Y; \Zps)$ to $f_*(e_m) \in H_*(Y;\Zps)$. By \cite[Proposition 6.4]{CMNTorsion}, the generators $e_m$ may be chosen so that $h$ carries Samelson products to commutators; that is, so that $h([f,g]) = [h(f),h(g)] \in H_*(\Omega P^{n+1}(p^r); \Zps)$.

Thus, the composition $h \circ \PhiPi^{r,s}$ respects brackets, and the codomain, $H_*(\Omega P^{n+1}(p^r); \Zps)$ carries a (genuine) Lie algebra structure. We therefore obtain a factorization of $h \circ \PhiPi^{r,s}$ through $\quot$ to give a map of Lie algebras $\PhiH^{r,s}$ which satisfies the following lemma:

\begin{lemma} \label{hurewiczTriangle} The following diagram commutes: \begin{center}
\begin{tabular}{c}
\xymatrix{
L'(x,dx) \otimes \Zps \ar^{\PhiPi^{r,s}}[r] \ar^{\quot}[d]  & \pi_*(\Omega P^{n+1}(p^r); \Zps) \ar^{h}[d] \\
L(x,dx) \otimes \Zps \ar^{\PhiH^{r,s}}[r] & H_*(\Omega P^{n+1}(p^r); \Zps).
}
\end{tabular}
\end{center} \qed \end{lemma}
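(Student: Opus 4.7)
The plan is to observe that the statement is essentially a universal-property argument: the map $\PhiH^{r,s}$ is \emph{defined} as the factorization of $h \circ \PhiPi^{r,s}$ through $\quot$, so the lemma amounts to verifying that such a factorization exists and is unique.

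First I would check that the composite $h \circ \PhiPi^{r,s}$ is a map of nonassociative graded $\Zps$-algebras from $L'(x,dx) \otimes \Zps$ to $H_*(\Omega P^{n+1}(p^r); \Zps)$, where the target is equipped with the commutator bracket coming from the Pontrjagin product. By construction, $\PhiPi^{r,s}$ is a map of nonassociative algebras (it is the extension of $\phi_\pi^{r,s}$ via the universal property of $L'(x,dx) \otimes \Zps$), and by \cite[Proposition 6.4]{CMNTorsion} the Hurewicz map $h$ carries Samelson products to commutators, once the generators $e_m$ are chosen appropriately. Composition therefore preserves brackets.

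Next I would verify that the commutator bracket on $H_*(\Omega P^{n+1}(p^r); \Zps)$ satisfies the three Lie-algebra axioms of Definition \ref{LieDef}: graded antisymmetry and the graded Jacobi identity are standard formal consequences of associativity of the Pontrjagin product, while the identity $[x,[x,x]] = 0$ for $x$ of odd degree follows (for $p$ odd) from a direct calculation using graded commutativity of $x$ with $x^2$. In particular, even when $p=3$, where Samelson products in $\pi_*$ fail to satisfy Jacobi, the image in homology lies in a genuine Lie algebra.

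Having established that $h \circ \PhiPi^{r,s}$ is a map of nonassociative algebras whose target is a genuine Lie algebra, the defining Lie relations (the kernel of the natural quotient $\quot : L'(x,dx) \otimes \Zps \to L(x,dx) \otimes \Zps$) are sent to zero, so there is a unique induced map $\PhiH^{r,s} : L(x,dx) \otimes \Zps \to H_*(\Omega P^{n+1}(p^r); \Zps)$ of Lie algebras with $\PhiH^{r,s} \circ \quot = h \circ \PhiPi^{r,s}$; this is exactly the commutativity asserted. There is no real obstacle here beyond bookkeeping --- the only mildly delicate point is the verification of the third Lie axiom in odd characteristic $p \geq 3$, for which it is important that we have already restricted to odd primes.
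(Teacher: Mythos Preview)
Your proposal is correct and follows the same approach as the paper: the lemma carries a \qed with no proof, since the paragraph preceding it observes that $h \circ \PhiPi^{r,s}$ respects brackets and that the codomain is a genuine Lie algebra, whence $\PhiH^{r,s}$ is \emph{defined} as the induced factorization through $\quot$. Your write-up is simply a more detailed expansion of that same universal-property argument.
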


\subsection{Tensor algebras and the Bott-Samelson Theorem} \label{TBSSubsection}

The purpose of this section is to introduce some notation for dealing with tensor algebras, and to recall the Bott-Samelson Theorem (Theorem \ref{BS}). We define the \textit{tensor algebra} on a graded $R$-module $V$ to be $T(V) = \bigoplus_{k=1}^\infty V^{\otimes k}$, where $V^{\otimes k}$ is the tensor product of $k$ copies of $V$. In particular, this definition is `reduced' since we do not insert a copy of $R$ in degree 0. The multiplication is given by concatenation of tensors, and makes $T(V)$ into the free graded associative algebra on $V$. Let $A$ be an algebra and let $\varphi : V \longrightarrow A$ be a homomorphism. We write $\widetilde{\varphi} : T(V) \longrightarrow A$ for the map of algebras induced by $\varphi$. Let $$\iota_i : V^{\otimes i} \longrightarrow T(V)$$ be the inclusion, and let $$\zeta_i : T(V) \longrightarrow V^{\otimes i}$$ be the projection.

Bott and Samelson first proved their theorem in \cite{BottSamelson}; we give the formulation from Selick's book \cite{Selick}.

\begin{theorem}[Bott-Samelson] \label{BS} Let $R$ be a PID, and let $X$ be a connected space with $\widetilde{H}_*(X;R)$ free over $R$. Then $\widetilde{H}_*(\Omega \Sigma X;R) \cong T(\widetilde{H}_*(X;R))$ and $\eta: X \longrightarrow \Omega \Sigma X$ induces the canonical map $\widetilde{H}_*(X;R) \longrightarrow T(\widetilde{H}_*(X;R))$. \end{theorem}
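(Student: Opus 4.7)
The plan is to use the James construction. Let $JX$ denote the James reduced product on $X$, the free topological monoid generated by $X$ subject to the relation that the basepoint is the identity. It is filtered by subspaces $J_1 X \subset J_2 X \subset \cdots$, where $J_k X$ is the image of $X^k$ under the projection, and the successive cofibres are $J_k X / J_{k-1} X \cong X^{\wedge k}$.

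First, I would invoke James' theorem: for connected $X$ there is a weak homotopy equivalence $JX \simeq \Omega \Sigma X$, under which the canonical inclusion $X \hookrightarrow JX$ corresponds (up to homotopy) to the unit $\eta : X \to \Omega \Sigma X$. The comparison map is built from the $H$-space structure on $\Omega \Sigma X$ and the adjoint of the identity on $\Sigma X$; one verifies it is an equivalence by reducing, via the filtration and induction, to showing an isomorphism of homology in one variable at a time.

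Next, I would compute $\widetilde{H}_*(JX; R)$ using the James filtration. Since $\widetilde{H}_*(X; R)$ is free over the PID $R$, iterated application of the K\"unneth formula gives $\widetilde{H}_*(X^{\wedge k}; R) \cong \widetilde{H}_*(X; R)^{\otimes k}$ with no Tor terms. Since each inclusion $J_{k-1} X \hookrightarrow J_k X$ is a cofibration with cofibre $X^{\wedge k}$, one shows inductively (using the long exact sequence and freeness, so that the sequences split) that $\widetilde{H}_*(J_k X; R) \cong \bigoplus_{i=1}^{k} \widetilde{H}_*(X; R)^{\otimes i}$. Passing to the colimit yields the additive isomorphism $\widetilde{H}_*(\Omega \Sigma X; R) \cong T(\widetilde{H}_*(X; R))$.

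Finally, I would identify the Pontrjagin product with tensor concatenation. The multiplication on $\Omega \Sigma X$ corresponds to concatenation of words in $JX$, and under the identification of the associated graded with $\bigoplus_k X^{\wedge k}$ this concatenation is precisely the map $X^{\wedge i} \wedge X^{\wedge j} \to X^{\wedge (i+j)}$ realising the tensor product. Hence the additive isomorphism is multiplicative, and the naturality of the construction ensures that the image of $\widetilde{H}_*(X; R)$ under $\eta_*$ is the canonical summand $V \subset T(V)$. The main obstacle is James' theorem itself: constructing a sufficiently explicit comparison map $JX \to \Omega \Sigma X$ and verifying it is a weak equivalence (for which the cleanest route is to show both sides realise the same free $A_\infty$/associative monoid completion of $X$, and then invoke Whitehead's theorem after computing homology). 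Everything else is a bookkeeping exercise with the filtration and K\"unneth.
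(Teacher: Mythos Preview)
Your proposal is a correct outline of the standard proof of the Bott--Samelson theorem via the James construction, and indeed the paper uses exactly this circle of ideas elsewhere (Lemmas \ref{JamesHomology}, \ref{LAMBDA}, \ref{describeLambda} and Proposition \ref{BSGen}). However, there is nothing to compare your argument against here: the paper does not prove Theorem \ref{BS} at all. It is stated as a classical result, attributed to Bott--Samelson and cited from Selick's book, and is used as a black box (for instance in Lemma \ref{injection1} and Proposition \ref{injection2}). So your sketch is not so much an alternative to the paper's proof as a proof the paper deliberately omits.
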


The Bott-Samelson Theorem immediately allows us to find a free Lie algebra in the loop-homology of a Moore space.

\begin{lemma} \label{injection1} The map $\PhiH^{r,s} : L(x,dx) \otimes \Zps \longrightarrow H_*(\Omega P^{n+1}(p^r); \Zps)$ is an injection. \end{lemma}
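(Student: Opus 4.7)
The plan is to apply the Bott–Samelson theorem to identify the codomain with a tensor algebra, show that $\PhiH^{r,s}$ is then identified with the canonical map from a free graded Lie algebra into its associated free associative algebra, and invoke a graded Poincar\'e--Birkhoff--Witt theorem to conclude injectivity.

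First I would note that $P^{n+1}(p^r) = \Sigma P^n(p^r)$ and that $\widetilde{H}_*(P^n(p^r); \Zps)$ is free over $\Zps$ of rank two on the generators $e_n$ and $s_{n-1}$. Theorem \ref{BS} (Bott--Samelson) therefore gives an isomorphism of graded $\Zps$-algebras
\[
H_*(\Omega P^{n+1}(p^r); \Zps) \;\cong\; T(\widetilde{H}_*(P^n(p^r); \Zps)),
\]
where the right-hand side is the free graded associative $\Zps$-algebra on $\{e_n, s_{n-1}\}$, with commutator bracket $[a,b] = ab - (-1)^{\deg(a)\deg(b)}ba$ as Lie structure. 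Bott--Samelson further tells us that $\eta_*$ is the canonical inclusion of $\widetilde{H}_*(P^n(p^r);\Zps)$ as the generators of this tensor algebra.

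Next I would verify that $\PhiH^{r,s}$ sends $x \mapsto e_n$ and $dx \mapsto s_{n-1}$. This is a direct unwinding of the definitions of $v$ and $u$: the map $v' = \underline{\textrm{red}}^{r,s}$ is degree $1$ on the top cell, hence an isomorphism on $\Zps$-homology in degree $n$, so $h(v) = \eta_*(v'_*(e_n)) = e_n$. Similarly, $u' = \underline{\beta}^r \circ \underline{\rho}^s$ carries the top generator of $H_*(P^{n-1}(p^s); \Zps)$ to $s_{n-1}$ (up to a choice of sign of generator, which we may absorb), so $h(u) = s_{n-1}$. Therefore, under the Bott--Samelson identification, $\PhiH^{r,s}$ is exactly the map of graded Lie algebras
\[
L(\langle x,dx\rangle) \otimes \Zps \;\longrightarrow\; T(\langle x,dx\rangle) \otimes \Zps
\]
induced by sending the generators to themselves, via the universal property of the free Lie algebra applied to the commutator bracket on $T$.

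The injectivity of this canonical map is a graded analogue of the Poincar\'e--Birkhoff--Witt theorem. The main subtlety is that we are working over the ring $\Zps$ rather than a field, but since $\Zps$ is a PID and $\langle x, dx\rangle \otimes \Zps$ is a free graded $\Zps$-module (and $p$ is odd, so the relations of Definition \ref{LieDef} cause no $2$-torsion issues), the standard graded PBW argument goes through; alternatively, since $L(\langle x, dx\rangle) \otimes \Zps$ is free as a $\Zps$-module, injectivity can be checked after tensoring with $\Zp$ (using Lemma \ref{oneway}), which reduces the statement to the classical PBW theorem for free graded Lie algebras over a field. Either way, this finishes the proof.
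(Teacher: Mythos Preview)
Your approach is essentially the same as the paper's: apply Bott--Samelson to identify $H_*(\Omega P^{n+1}(p^r);\Zps)$ with $T(x,dx)\otimes\Zps$, observe that $\PhiH^{r,s}$ becomes the canonical map $L(x,dx)\otimes\Zps\to T(x,dx)\otimes\Zps$, and appeal to a PBW-type result for injectivity. The paper cites Proposition~2.9 and Corollary~2.7 of Cohen--Moore--Neisendorfer for this last step, which is exactly the graded PBW statement you invoke.

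One small caution on your alternative reduction: Lemma~\ref{oneway} as stated gives the \emph{opposite} implication (injectivity over $\Zps$ implies injectivity over $\Zpt$), not the direction you need. The reduction you want---checking injectivity after tensoring with $\Zp$---does hold here, but it requires the additional observation that the codomain $T(x,dx)\otimes\Zps$ is also free over $\Zps$; with both domain and codomain free, a kernel element would produce a kernel element mod $p$. Your first route (direct graded PBW over $\Zps$) avoids this wrinkle and matches the paper's citation.
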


\begin{proof} Since $r \geq s$, the module $H_*( P^{n}(p^r); \Zps)$ is free over $\Zps$. By the Bott-Samelson Theorem \ref{BS}, $\widetilde{H}_*(\Omega P^{n+1}(p^r); \Zps) \cong T(x,dx) \otimes \Zps$, and this isomorphism identifies $\PhiH^{r,s}$ with the natural map $L(x,dx) \otimes \Zps \longrightarrow T(x,dx) \otimes \Zps$. But this latter map is an injection by Proposition 2.9 and Corollary 2.7 of \cite{CMNTorsion}. \end{proof}

We have the following corollary, which will be the main ingredient in the proof of Theorem \ref{HPrelim}.

\begin{corollary} \label{combining} Let $Y$ be a simply connected $CW$-complex, let $p$ be an odd prime, and let $r \in \mathbb{N}$. Let $\mu: P^{n+1}(p^r) \longrightarrow Y$ be a continuous map. If the induced map $$(\Omega \mu)_* : H_*(\Omega P^{n+1}(p^r);\Zps) \longrightarrow H_*(\Omega Y;\Zps)$$ is an injection, then the module $\textrm{Im}((\Omega \mu)_* \circ \PhiH^{r,s} \circ \quot \circ d)$ is $\Zps$-hyperbolic. \qed \end{corollary}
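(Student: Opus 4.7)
The plan is to use injectivity to transport the image back to the free Lie algebra side, and then invoke Lemma \ref{ManyBoundaries} with ground ring $\Zps$ rather than $\Zpr$ to produce the required hyperbolicity.

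First I would observe that the composite $(\Omega \mu)_* \circ \PhiH^{r,s}$ is an injection of $\Zps$-modules: the hypothesis gives injectivity of $(\Omega \mu)_*$, and Lemma \ref{injection1} gives injectivity of $\PhiH^{r,s}$. Since any injection of $\Zps$-modules restricts to an isomorphism of each submodule of its domain onto the image of that submodule, restricting along the inclusion $\textrm{Im}(\quot \circ d) \hookrightarrow L(x,dx) \otimes \Zps$ yields
$$\textrm{Im}\bigl((\Omega \mu)_* \circ \PhiH^{r,s} \circ \quot \circ d\bigr) \cong \textrm{Im}(\quot \circ d).$$

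Next I would identify the right-hand side. By construction $\quot\colon L'(x,dx) \otimes \Zps \to L(x,dx) \otimes \Zps$ is surjective and intertwines the two copies of $d$, so
$$\textrm{Im}(\quot \circ d) = d(\quot(L'(x,dx) \otimes \Zps)) = d(L(x,dx) \otimes \Zps) = BL(\langle x,dx\rangle \otimes \Zps),$$
the module of boundaries in the free differential Lie algebra over $\Zps$ on the acyclic module $\{x,dx\}$. Applying Lemma \ref{ManyBoundaries} with $r$ replaced by $s$ (so that $V = \langle x,dx\rangle \otimes \Zps$ is a free acyclic $\Zps$-module of total dimension $2$) yields that $BL(\langle x,dx\rangle \otimes \Zps)$ is $\Zps$-hyperbolic. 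Since $\Zps$-hyperbolicity depends only on the isomorphism class of a graded $\Zps$-module, the original image is $\Zps$-hyperbolic as well.

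There is no real obstacle: all the substantive work has already been done, in Section \ref{LieSection} for the boundary-counting and in Lemma \ref{injection1} for the injectivity of $\PhiH^{r,s}$. The only point worth flagging is that Lemma \ref{ManyBoundaries} is not tied to the specific exponent $r$ appearing in the Moore space; its proof reduces via Lemma \ref{reduceToField} to the field case over $\Zp$, so the same conclusion holds verbatim over $\Zps$ for the $s$ in our hypothesis.
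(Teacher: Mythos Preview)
Your proposal is correct and follows essentially the same approach as the paper. The only cosmetic difference is that the paper packages the identification $\textrm{Im}(\quot \circ d) = BL(x,dx)$ and the appeal to Lemma \ref{ManyBoundaries} into Corollary \ref{mbCor}, which it then cites directly (implicitly replacing $r$ by $s$), whereas you unpack that step explicitly; the substance of the argument is identical.
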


\begin{proof} By Lemma \ref{injection1}, $\PhiH^{r,s}$ is an injection, and by Corollary \ref{mbCor}, the module $\textrm{Im}(\quot \circ d)$ is $\Zps$-hyperbolic. It follows that $(\Omega \mu)_* \circ \PhiH^{r,s} (\textrm{Im}(\quot \circ d)) = \textrm{Im}((\Omega \mu)_* \circ \PhiH^{r,s} \circ \quot \circ  d)$ is also $\Zps$-hyperbolic. \end{proof}

\section{The suspension case} \label{SuspensionSection}

The purpose of this section is to show that Theorem \ref{HPrelim} implies Theorem \ref{HCriterion}. This will be accomplished by means of Proposition \ref{injection2}, whose proof is the goal of this section. The main point is that even if $\widetilde{H}_*(X; \Zps)$ is not free over $\Zps$, the canonical map of the Bott-Samelson Theorem (Theorem \ref{BS}) is still an injection. That is, the homology $\widetilde{H}_*(\Omega \Sigma X ; \Zps)$ always contains the tensor algebra on $\widetilde{H}_*(X; \Zps)$, but if $\widetilde{H}_*(X; \Zps)$ is not free then it will contain other things too.

In Subsection \ref{StructureSubs}, we recall the James splitting $\Sigma \Omega \Sigma X \simeq \bigvee_{k=1}^\infty \Sigma X^{\wedge k}.$ This gives us Proposition \ref{BSGen}, which describes the structure of the Pontrjagin algebra $\widetilde{H}_*(\Omega \Sigma X ; \Zps)$, in particular identifying the tensor algebra $T(\widetilde{H}_*(X; \Zps))$ as a subalgebra. Subsection \ref{evSubs} proves Lemma \ref{evproj}, which describes the effect of the evaluation map on $H_*(\Omega \Sigma X ; \Zps)$. Subsection \ref{FinnickySubs} draws these ingredients together to prove Proposition \ref{injection2}.

Let $\sigma : \widetilde{H}_*(Y) \longrightarrow \widetilde{H}_{*+1}(\Sigma Y)$ denote the suspension isomorphism. For a space $X$, let $X^k$ denote the product of $k$ copies of $X$, and let $X^{\wedge k}$ denote the smash product. Let $\sim$ be the relation on $X^k$ defined by $$(x_1, \dots ,x_{i-1}, * , x_{i+1}, x_{i+2}, \dots x_{k}) \sim (x_1, \dots , x_{i-1} , x_{i+1}, * , x_{i+2} , \dots x_{k}).$$
Let $J_k(X)$ be the space $\faktor{X^k}{\sim}$. There is a natural inclusion $$J_k(X) \longrightarrow J_{k+1}(X)$$ $$(x_1, \dots , x_k) \mapsto (x_1, \dots, x_k, *).$$

The \emph{James construction} $JX$ is defined to be the colimit of the diagram consisting of the spaces $J_k(X)$ and the above inclusions. Notice that $JX$ carries a product given by concatenation, which makes it into the free topological monoid on $X$, and that a topological monoid is in particular an $H$-space.

The adjunction isomorphism $[\Sigma X, Y] \cong [X, \Omega Y]$ will be written in both directions as $f \longmapsto \overline{f}$. Recall that $\eta$ denotes the unit of the adjunction, which is the map $X \longrightarrow \Omega \Sigma X$ sending $x \in X$ to $(t \mapsto \langle t,x \rangle) \in \Omega \Sigma X$. We will write $\ev$ for the \textit{evaluation map}; the counit $\Sigma \Omega Y \longrightarrow Y$, which sends $\langle t, \gamma \rangle \in \Sigma \Omega Y$ to $\gamma(t) \in Y$.

\subsection{The tensor algebra inside $H_*(\Omega \Sigma X)$} \label{StructureSubs}

In this section we will generalise the Bott-Samelson theorem to suit our purpose. Specifically, the map $\eta : X \longrightarrow \Omega \Sigma X$ induces a map $\eta_* : \widetilde{H}_*(X) \longrightarrow \widetilde{H}_*(\Omega \Sigma X)$ on homology. By the universal property of the tensor algebra, $\eta_*$ extends to a map of algebras $$ \widetilde{\eta_*} : T(\widetilde{H}_*(X)) \longrightarrow \widetilde{H}_*(\Omega \Sigma X).$$

The Bott-Samelson Theorem (Theorem \ref{BS}) says that if the homology $H_*(X;\Zps)$ is free then $\widetilde{\eta_*}$ is an isomorphism. We will show that even if $H_*(X;\Zps)$ is not free, the map $\widetilde{\eta_*}$ is still an injection. This is by no means new, but follows reasonably easily from better-known results, so we shall derive it in this way. In this section homology is taken with $\Zps$-coefficients (unless otherwise stated).

\begin{lemma} \label{crossCoker} The cross product map $\widetilde{H}_*(X)^{\otimes k} \xrightarrow{\times} \widetilde{H}_*(X^{\wedge k})$ is injective, split (although not naturally) and its cokernel $C$ satisfies $p^{s-1} C = 0$. \end{lemma}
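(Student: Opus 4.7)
The plan is to proceed by induction on $k$. The base case $k = 1$ is trivial. For the inductive step (from $k-1$ to $k$), I would reduce to a two-variable version of the claim: for any spaces $A$ and $B$, the cross product
\[ \widetilde{H}_*(A; \Zps) \otimes_{\Zps} \widetilde{H}_*(B; \Zps) \longrightarrow \widetilde{H}_*(A \wedge B; \Zps) \]
is split injective with cokernel annihilated by $p^{s-1}$. Given this and the inductive hypothesis, writing $X^{\wedge k} = X^{\wedge(k-1)} \wedge X$ and applying the two-variable claim with $A = X^{\wedge(k-1)}$ and $B = X$, combined with the inductive hypothesis tensored with $\widetilde{H}_*(X; \Zps)$, exhibits the cross product for $X^{\wedge k}$ as a composition of two split injections with cokernels annihilated by $p^{s-1}$. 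Such a composition is itself split injective, with cokernel the direct sum of the two individual cokernels, preserving the $p^{s-1}$-annihilation.

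To establish the two-variable claim I would combine two sources of split short exact sequences. First, the integer K\"unneth formula for smash products,
\[ 0 \to \widetilde{H}_*(A; \mathbb{Z}) \otimes \widetilde{H}_*(B; \mathbb{Z}) \to \widetilde{H}_*(A \wedge B; \mathbb{Z}) \to \mathrm{Tor}_{\mathbb{Z}}(\widetilde{H}_*(A; \mathbb{Z}), \widetilde{H}_*(B; \mathbb{Z}))[-1] \to 0, \]
is split exact. Second, the universal coefficient theorem decomposes $\widetilde{H}_*(Y; \Zps) \cong \widetilde{H}_*(Y; \mathbb{Z}) \otimes \Zps \oplus \mathrm{Tor}_{\mathbb{Z}}(\widetilde{H}_{*-1}(Y; \mathbb{Z}), \Zps)$ unnaturally. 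Tensoring with $\Zps$ preserves split exactness, so after applying UCT to $A$, $B$, and $A \wedge B$, both the source and target of the $\Zps$-cross product break into an ``integer tensor'' piece and ``Bockstein'' pieces; the integer tensor pieces are identified via the $\Zps$-tensored integer K\"unneth injection, while the Bockstein pieces match up via the graded Leibniz rule for the Bockstein and the cross product.

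The hard part will be extracting the tight $p^{s-1}$-annihilation of the cokernel. The UCT Tor terms $\mathrm{Tor}_{\mathbb{Z}}(-, \Zps)$ are only annihilated by $p^s$, so the desired sharper bound cannot come from them directly; it must instead come from $\Zps$-level Tor via Lemma \ref{TorAnnihilation}. I would therefore also invoke the algebraic K\"unneth spectral sequence for the tensor product $\widetilde{C}_*(A; \Zps) \otimes_{\Zps} \widetilde{C}_*(B; \Zps)$ of cellular chain complexes over $\Zps$, whose rows $E^2_{p, *}$ for $p \geq 1$ are all annihilated by $p^{s-1}$ by that lemma. Reconciling the explicit integer/UCT splittings with the $\Zps$-level K\"unneth spectral sequence should collapse the extension problems and yield both the injectivity of the cross product and the $p^{s-1}$-annihilation of its cokernel.
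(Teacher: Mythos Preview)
Your induction on $k$ and reduction to a two-variable statement match the paper's implicit strategy. The divergence is in how you handle the two-variable case. The paper works entirely over $\Zps$: it cites a K\"unneth short exact sequence
\[ 0 \longrightarrow H_*(A;\Zps) \otimes_{\Zps} H_*(B;\Zps) \xrightarrow{\ \times\ } H_*(A \times B; \Zps) \longrightarrow \mathrm{Tor}_{\Zps}\bigl(H_*(A;\Zps), H_{*-1}(B;\Zps)\bigr) \longrightarrow 0, \]
notes that it splits unnaturally, applies Lemma~\ref{TorAnnihilation} to see that the Tor term is annihilated by $p^{s-1}$, and then passes from $A \times B$ to $A \wedge B$ by a short diagram comparing the product with the wedge $A \vee B$. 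Your detour through the integer K\"unneth formula and the universal coefficient theorem is unnecessary: since you end up invoking the $\Zps$-level K\"unneth machinery anyway (precisely because, as you observe, the integer route only gives $p^s$-annihilation), you may as well start there and skip the first half of your plan.

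There is also a genuine gap. The sentence ``reconciling the explicit integer/UCT splittings with the $\Zps$-level K\"unneth spectral sequence should collapse the extension problems'' is a hope, not an argument. If you use the K\"unneth spectral sequence over $\Zps$ rather than a short exact sequence, the cokernel of the cross product (the edge map) carries a filtration whose associated graded pieces are subquotients of $E_\infty^{i,*}$ for $i \ge 1$. An extension of Lemma~\ref{TorAnnihilation} to higher Tor shows that each such piece is annihilated by $p^{s-1}$, but an iterated extension of $p^{s-1}$-torsion modules need not itself be $p^{s-1}$-torsion. You have not said how you would rule this out, nor how the integer-level splittings help with it. The paper sidesteps the issue by asserting the short exact sequence form directly; if you prefer to be careful and work with the spectral sequence, you owe either an argument that it degenerates to two columns or some other direct bound on the cokernel.
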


\begin{proof} For spaces $A$ and $B$ The K\"unneth Theorem gives an exact sequence $$0 \longrightarrow H_*(A) \otimes H_*(B) \xrightarrow{\times} H_*(A \times B) \longrightarrow \textrm{Tor}(H_*(A), H_{*-1}(B)) \longrightarrow 0,$$ where the $\textrm{Tor}$ is taken over $\Zps$, and this sequence is (unnaturally) split. By Lemma \ref{TorAnnihilation} we have $p^{s-1}\textrm{Tor}(H_*(A), H_{*-1}(B)) = 0$.

Let $a_0: pt \longrightarrow A$ denote the inclusion of the basepoint of $A$ and let $b_0$ denote the inclusion of the basepoint of $B$. Let $j: H_*(A) \oplus H_*(B) \longrightarrow H_*(A) \otimes H_*(B)$ be the composite $$H_*(A) \oplus H_*(B) \xrightarrow{\cong} H_*(A) \otimes H_*(pt) \oplus H_*(pt) \otimes H_*(B) \xrightarrow{(id_A)_* \otimes (b_0)_* \oplus (a_0)_* \otimes (id_B)_*} H_*(A) \otimes H_*(B).$$ To relate the reduced and unreduced situations we have the following diagram (which we take to define the reduced cross product) where $i$, $i_1$ and $i_2$ are the inclusions and $p$ is the quotient.  \begin{center}
\begin{tabular}{c}
\xymatrix{
0 \ar[r] & H_*(A) \oplus H_*(B) \ar@{^{(}->}^j[d] \ar^(.55){(i_1)_* \oplus (i_2)_*}[r] & H_*(A \vee B) \ar@{^{(}->}^{i_*}[d] \ar[r] & 0 \ar[d] \ar[r] & 0 \\
0 \ar[r] & H_*(A) \otimes H_*(B) \ar^(.55){\times}[r] \ar[d] & \ar[r] H_*(A \times B) \ar^{p_*}[d] & \textrm{Tor}(H_*(A), H_{*-1}(B)) \ar@{=}[d] \ar[r] & 0 \\
0 \ar[r] & \widetilde{H}_*(A) \otimes \widetilde{H}_*(B) \ar^(.55){\times}[r] & \widetilde{H}_*(A \wedge B) \ar[r] & \textrm{Tor}(H_*(A), H_{*-1}(B)) \ar[r] & 0.
}
\end{tabular}
\end{center} The bottom row is obtained from the other two by taking cokernels, so is automatically exact, and it therefore suffices to check that the top two squares commute. The top right square commutes because the map $(i_1)_* \oplus (i_2)_*$ is an isomorphism, so the composite of $i_*$ with the map into the $\textrm{Tor}$ term factors through two terms of an exact sequence, hence is zero, as required.

We now check that the top left square commutes. It suffices to check commutativity on each summand of the domain individually. We will do so for $H_*(A)$; the case of $H_*(B)$ is analogous. Identifying $H_*(A)$ with $H_*(A) \otimes H_*(pt)$, the restriction of $j$ becomes $(id_A)_* \otimes (b_0)_*$. The composite with the cross product is written $(id_A)_* \times (b_0)_*$, and by bilinearity of cross product this is the same as $(id_A \times b_0)_*$, where now the product is taken in spaces. But under the identification $A \cong A \times \{pt\}$, this is just the inclusion $A \longrightarrow A \times B$, which is the map obtained by going the other way round the square, as required.

To finish, we note that since the middle row is split, the bottom row is also split. \end{proof}

The understanding of the cross product from Lemma \ref{crossCoker} allows us to understand part of the homology of $JX$, by constructing a map $\varphi$ as in the following lemma.

\begin{lemma} \label{JamesHomology} The maps $$\widetilde{H}_*(X)^{\otimes k} \longrightarrow H_*(X)^{\otimes k} \xrightarrow{\times} H_*(X^k) \longrightarrow H_*(J_k(X)) \longrightarrow H_*(J(X))$$ define an injection of algebras $T(\widetilde{H}_*(X)) \xrightarrow{\JTens} \widetilde{H}_*(J(X)).$ Furthermore, $\textrm{Im}(\JTens)$ is a direct summand, and we may write $\widetilde{H}_*(J(X)) \cong T(\widetilde{H}_*(X)) \oplus C$ such that the complementary module $C$ satisfies $p^{s-1} C = 0$. \end{lemma}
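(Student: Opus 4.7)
The plan is to invoke the James splitting $\Sigma J(X) \simeq \bigvee_{k \geq 1} \Sigma X^{\wedge k}$, which will be recalled later in this subsection. The suspension isomorphism converts this into $\widetilde{H}_*(J(X)) \cong \bigoplus_{k \geq 1} \widetilde{H}_*(X^{\wedge k})$. Lemma \ref{crossCoker} then splits each summand as $\widetilde{H}_*(X)^{\otimes k} \oplus C_k$ with $p^{s-1}C_k = 0$, and summing over $k$ yields an abstract decomposition $\widetilde{H}_*(J(X)) \cong T(\widetilde{H}_*(X)) \oplus C$ with $p^{s-1}C = 0$.

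The remaining task is to identify the abstract inclusion arising from this splitting with $\JTens$ as defined in the lemma. The key observation is that the composite $X^k \hookrightarrow J_k(X) \twoheadrightarrow X^{\wedge k}$ (the second map being the quotient collapsing words of length less than $k$) is the standard quotient, and its suspension implements the $k$-th projection of the James splitting. Consequently, $\JTens$ restricted to $\widetilde{H}_*(X)^{\otimes k}$ and followed by the $k$-th James projection is precisely the reduced cross product of Lemma \ref{crossCoker}, while the projection onto the $j$-th summand for $j > k$ vanishes because the restriction factors through $H_*(J_k(X))$, whose image under the James splitting lies in $\bigoplus_{i \leq k}\widetilde{H}_*(X^{\wedge i})$. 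An upper-triangular change of basis in the target, made possible by the fact that the diagonal reduced cross products are split injective, matches $\JTens$ with the direct sum of reduced cross products. This exhibits $\JTens$ as split injective with complement annihilated by $p^{s-1}$.

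The algebra-map property is a formal consequence of the fact that both the tensor product on $T(\widetilde{H}_*(X))$ and the Pontrjagin product on $\widetilde{H}_*(J(X))$ descend from the concatenation maps $J_k(X) \times J_m(X) \to J_{k+m}(X)$ through K\"unneth and the homology cross product. The main obstacle is the compatibility check described above: verifying that the homology-level James projection agrees with the James-filtration quotient $J_k(X) \twoheadrightarrow X^{\wedge k}$. Once this is in hand, the upper-triangular structure lets us conclude without having to compute any off-diagonal terms explicitly.
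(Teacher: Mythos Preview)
Your argument is sound, but it takes a different route from the paper's. The paper works directly with the James filtration: for each $k$ it sets up a map of short exact sequences
\begin{center}
\begin{tabular}{c}
\xymatrix{
0 \ar[r] & T_{k-1}(\widetilde{H}_*(X)) \ar[r] \ar[d]^{\JTens} & T_k(\widetilde{H}_*(X)) \ar[r] \ar[d]^{\JTens} & \widetilde{H}_*(X)^{\otimes k} \ar[d]^{\times} \ar[r] & 0 \\
0 \ar[r] & \widetilde{H}_*(J_{k-1}(X)) \ar[r] & \widetilde{H}_*(J_k(X)) \ar[r] & \widetilde{H}_*(X^{\wedge k}) \ar[r] & 0,
}
\end{tabular}
\end{center}
where the bottom row comes from the long exact sequence of the pair $(J_k(X), J_{k-1}(X))$ and is split because the quotient $X^k \to X^{\wedge k}$ (split after one suspension) factors through $J_k(X)$. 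Induction on $k$, with Lemma~\ref{crossCoker} handling the right-hand column, gives injectivity and the complement description in one stroke. You instead invoke the James splitting $\Sigma J(X) \simeq \bigvee_k \Sigma X^{\wedge k}$ globally and then run a triangular-matrix argument to match $\JTens$ with the direct sum of reduced cross products. This is correct provided one knows the James splitting restricts compatibly to each $\Sigma J_k(X)$, but note that the James splitting is itself usually proved by exactly the filtration argument the paper runs---so you are packaging that argument as a black box and then partially unpacking it to recover the map identification. The paper's route is more self-contained and avoids the separate identification step; your route has the virtue of making the global structure $C \cong \bigoplus_k C_k$ visible from the outset.
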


\begin{proof} We use a modified version of the argument in \cite[Proposition 3C.8]{Hatcher}. First, $\JTens$ is a ring homomorphism, because the product in $J(X)$ descends from the natural map $X^i \times X^j \longrightarrow X^{i+j}$. To see that we have an injection, we consider the following diagram, where we follow Hatcher's notation and set $T_k(M) = \bigoplus_{i=1}^k M^{\otimes i}$: \begin{center}
\begin{tabular}{c}
\xymatrix{
0 \ar[r] & T_{k-1}(\widetilde{H}_*(X)) \ar^{\JTens}[d] \ar[r] & T_{k}(\widetilde{H}_*(X)) \ar^{\JTens}[d] \ar[r] & (\widetilde{H}_*(X))^{\otimes k} \ar^{\times}[d] \ar[r] & 0 \\
0 \ar[r] & \widetilde{H}_*(J_{k-1}(X)) \ar[r] & \widetilde{H}_*(J_k(X)) \ar[r] & \widetilde{H}_*(X^{\wedge k}) \ar[r] & 0.
}
\end{tabular}
\end{center}

Commutativity of the diagram follows from the definition of $\JTens$. Exactness of the top row is clear. The bottom row is obtained from the long exact sequence of the pair $(J_k(X), J_{k-1}(X))$, applying excision to pass to the quotient $J_k(X)/J_{k-1}(X) \simeq X^{\wedge k}$. This sequence is split because the quotient $X^k \longrightarrow X^{\wedge k}$ factors through the map $J_k(X) \longrightarrow X^{\wedge k}$, and the former map is split after suspending. Thus we get that $\widetilde{H}_*(J_k(X)) \cong \widetilde{H}_*(J_{k-1}(X)) \oplus \widetilde{H}_*(X^{\wedge k})$. Lemma \ref{crossCoker} tells us that $\widetilde{H}_*(X^{\wedge k}) \cong (\widetilde{H}_*(X))^{\otimes k} \oplus C$ with $p^{s-1} C = 0$, so the result follows immediately by inducting over $k$. \end{proof}

Our next job is to translate this understanding of $JX$ into an understanding of $\Omega \Sigma X$. It is well-known that the two are homotopy equivalent, but we wish to be precise about the maps. For a based space $Y$, let $\Omega' Y$ denote the space of loops of any length in $Y$, so that $\Omega Y$ is the subspace of $\Omega' Y$ consisting of loops of length 1. We will write $\gamma_1 \# \gamma_2$ for the concatenation of loops $\gamma_1$ and $\gamma_2$. For $\gamma \in \Omega' Y$ and $\ell \in \mathbb{R}_{>0}$, let $\gamma^{\ell}$ denote the linear reparameterization of $\gamma$ which has length $\ell$. Note that $\gamma \longmapsto \gamma^1$ is a continuous map $\Omega' Y \longrightarrow \Omega Y$, which is a retraction for the inclusion $\Omega Y \subset \Omega' Y$. For $x \in X$, let $\gamma_x \in \Omega \Sigma X$ be the loop defined by $\gamma_x(t) = \langle t, x \rangle$, which is equal to $\eta(x)$.

Now let $X$ be a connected $CW$-complex, which we take without loss of generality to have a single 0-cell, which is the basepoint. Let $d: X \longrightarrow [0,1]$ be any continuous map such that $d^{-1}(0)=\{*\}$.  Define a map $$\lambda : J(X) \longrightarrow \Omega \Sigma X$$ $$(x_1, \dots x_k) \longmapsto (\gamma_{x_1}^{d(x_1)} \# \gamma_{x_2}^{d(x_2)} \# \dots \# \gamma_{x_k}^{d(x_k)})^1.$$

The reparameterization is necessary so that $\lambda$ is well-defined when some $x_i = *$.

Hatcher proves the following as \cite[Theorem 4J.1]{Hatcher}.

\begin{lemma} \label{LAMBDA} The map $\lambda$ is a weak homotopy equivalence for any connected $CW$-complex $X$. Furthermore, $\lambda$ is an $H$-map, so it induces a map of algebras on homology. \qed \end{lemma}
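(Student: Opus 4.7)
My plan has two parts: verify the $H$-map property essentially by construction, then deduce the weak equivalence via a homology comparison.

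For the $H$-map property, I would argue directly from the definition of $\lambda$. Given words $w_1 = (x_1, \ldots, x_k)$ and $w_2 = (y_1, \ldots, y_m)$ in $JX$, the image $\lambda(w_1 \cdot w_2) = \lambda(x_1, \ldots, x_k, y_1, \ldots, y_m)$ is the length-$1$ reparameterization of the concatenation $\gamma_{x_1}^{d(x_1)} \# \cdots \# \gamma_{y_m}^{d(y_m)}$. The loop $\lambda(w_1) \# \lambda(w_2)$ traverses exactly the same arcs in the same order, but first reparameterizes the $k$-piece and the $m$-piece each to total length $1/2$ before concatenating. A linear homotopy of the underlying reparameterizing functions then provides a continuous homotopy $\lambda \circ \nabla \simeq m \circ (\lambda \times \lambda)$, where $\nabla$ denotes the multiplication on $JX$ and $m$ denotes loop concatenation.

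For the weak equivalence, my preferred route is through the homology calculation already in hand. Since $\lambda$ restricts to $\eta : X \longrightarrow \Omega \Sigma X$ along the inclusion $X = J_1(X) \hookrightarrow JX$, and since $\lambda$ is an $H$-map, we have the factorization $\lambda_* \circ \JTens = \widetilde{\eta_*}$ as maps $T(\widetilde{H}_*(X;k)) \longrightarrow \widetilde{H}_*(\Omega \Sigma X; k)$, for any coefficient field $k$. Bott--Samelson (Theorem \ref{BS}) tells us $\widetilde{\eta_*}$ is an isomorphism over a field; and the proof of Lemma \ref{JamesHomology} applied over $k$ (where the $\textrm{Tor}$-correction vanishes) likewise identifies $\JTens$ as an isomorphism. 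Hence $\lambda_*$ is an isomorphism on homology with every field coefficient, and by the universal coefficient theorem an isomorphism on integral homology too.

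To promote this homology equivalence to a weak equivalence, I would use that both $JX$ and $\Omega \Sigma X$ are $H$-spaces and in particular simple spaces, for which a homology equivalence is a weak equivalence by the classical Whitehead theorem (applied after passing to universal covers, which here are themselves $H$-spaces whose homology is computable from the base via the Serre spectral sequence of the covering). The main obstacle I anticipate is precisely this fundamental-group subtlety: when $X$ is merely connected rather than simply connected, $\pi_1(JX)$ and $\pi_1(\Omega \Sigma X) = \pi_2(\Sigma X)$ need not obviously agree, and one must check that the Hurewicz-type argument really does upgrade the homology isomorphism to a $\pi_1$-isomorphism. If this obstacle proves sticky, the alternative is Hatcher's quasi-fibration argument, where one constructs a quasi-fibration $JX \longrightarrow E \longrightarrow \Sigma X$ with $E$ contractible and compares it directly with the path-loop fibration, reading off the weak equivalence from the long exact sequence of homotopy groups.
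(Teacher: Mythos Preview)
The paper does not actually prove this lemma: it simply cites Hatcher's Theorem~4J.1, whose argument is the quasi-fibration comparison you mention as your fallback. So your primary route---compute homology on both sides and invoke a Whitehead-type theorem---is genuinely different from what the paper (via Hatcher) does.

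Your argument is essentially sound, and your $\pi_1$ worry is not a real obstacle. Both $JX$ and $\Omega\Sigma X$ are path-connected $H$-spaces, hence simple (the fundamental group is abelian and acts trivially on all higher homotopy), and for maps between simple spaces an integral homology isomorphism is already a weak equivalence; there is no need to pass to universal covers or run a separate $\pi_1$ check. The field-to-integer step is also fine in full generality: if the mapping cone has vanishing homology with $\mathbb{Q}$ and all $\mathbb{F}_p$ coefficients, the universal coefficient theorem forces its integral homology to be a torsion-free divisible group killed by tensoring with $\mathbb{Q}$, hence zero.

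One caution worth flagging: your approach takes Bott--Samelson (Theorem~\ref{BS}) as a black box. That is legitimate provided you have in mind a proof independent of the James equivalence---for instance via the Serre spectral sequence of the path-loop fibration, or the original Morse-theoretic argument. Some textbook treatments deduce Bott--Samelson \emph{from} $JX\simeq\Omega\Sigma X$, and if that is the only proof you are relying on then your argument is circular. Hatcher's quasi-fibration proof avoids this entirely by comparing homotopy groups directly, which is presumably why the paper cites it rather than arguing homologically.
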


The following lemma is immediate from the definition of $\lambda$.

\begin{lemma} \label{describeLambda} The composite $$X^k \rightarrow J_k(X) \rightarrow J(X) \xrightarrow{\lambda} \Omega \Sigma X$$ is homotopy equivalent to $m \circ \eta^k$, where $m$ is any choice of $k$-fold loop multiplication on $\Omega \Sigma X$. \qed \end{lemma}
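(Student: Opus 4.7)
The plan is to unpack the two maps and observe that they differ only by the reparameterisation scheme used to concatenate the loops $\gamma_{x_1}, \dots, \gamma_{x_k}$; since the space of such schemes is contractible (it is the interior of a simplex), the two resulting maps will be homotopic. First, I would note that the composite $X^k \rightarrow J_k(X) \rightarrow J(X) \xrightarrow{\lambda} \Omega \Sigma X$ is, by the very definition of $\lambda$, the map sending $(x_1, \dots, x_k)$ to the concatenation $(\gamma_{x_1}^{d(x_1)} \# \dots \# \gamma_{x_k}^{d(x_k)})^1$. That is, the $i$-th factor loop is allocated a subinterval of length proportional to $d(x_i)$ and the whole is reparameterised to length $1$.

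Next, I would reduce to a single specific choice of $m$. Because $\Omega \Sigma X$ is an $H$-group with homotopy-associative multiplication, any two $k$-fold loop multiplications $(\Omega \Sigma X)^k \rightarrow \Omega \Sigma X$ defined by different parenthesisations are homotopic, so it suffices to treat the standard choice $m_0$ in which each factor $\gamma_i$ is parameterised on $[(i-1)/k, i/k]$. Then $m_0 \circ \eta^k$ sends $(x_1, \dots, x_k)$ to $(\gamma_{x_1}^{1/k} \# \dots \# \gamma_{x_k}^{1/k})$, which is the same concatenation but with uniform subintervals.

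Finally, I would produce a homotopy between the two reparameterisation schemes by linear interpolation on the length vector. Explicitly, for $s \in [0,1]$ let the $i$-th subinterval have length
\[
a_i^s(x) = (1-s) \cdot \frac{d(x_i)}{\sum_j d(x_j)} + \frac{s}{k},
\]
and send $(x_1, \dots, x_k; s)$ to $(\gamma_{x_1}^{a_1^s(x)} \# \dots \# \gamma_{x_k}^{a_k^s(x)})^1$. At $s=0$ we recover $\lambda$ restricted to $X^k$, and at $s=1$ we recover $m_0 \circ \eta^k$.

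The main obstacle is continuity at the basepoint $(*, \dots, *)$, where $\sum d(x_j) = 0$ makes the formula for $a_i^s$ ill-defined. This is resolved by observing that $\gamma_* \in \Omega \Sigma X$ is the constant loop, so for $x$ near $(*, \dots, *)$ every concatenation appearing in the homotopy is close to the constant loop regardless of how the tiny length budget is distributed; continuity at this degenerate stratum therefore follows from the fact that $\eta$ is based, and the homotopy extends continuously. (Equivalently, one can replace the denominator $\sum d(x_j)$ by $\max(\sum d(x_j), \varepsilon)$ and let $\varepsilon$ depend on a partition of unity supported away from the basepoint — the two formulations are homotopic.)
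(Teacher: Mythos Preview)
Your proposal is correct and follows the only natural approach: both maps are concatenations of the same loops $\gamma_{x_i}$ differing only in how the unit interval is subdivided, and you interpolate linearly between the two subdivision schemes. The paper itself gives no proof at all---it declares the lemma ``immediate from the definition of $\lambda$'' and ends with a \qed---so you have simply written out the details (including the basepoint continuity check) that the paper leaves to the reader.
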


Recall from Subsection \ref{TBSSubsection} that $\iota_k : V^{\otimes k} \longrightarrow T(V)$ is the inclusion.

We are now ready to prove the main result of this subsection, which is what we will use later.

\begin{proposition} \label{BSGen} The map $$\widetilde{\eta_*} : T(\widetilde{H}_*(X)) \longrightarrow H_*(\Omega \Sigma X)$$ is an injection onto a summand, each restriction $\widetilde{\eta_*} \circ \iota_k$ is equal to $$ \widetilde{H}_*(X)^{\otimes k} \xrightarrow{\times} \widetilde{H}_*(X^k) \xrightarrow{(\eta^k)_*} \widetilde{H}_*((\Omega \Sigma X)^k) \xrightarrow{m_*} \widetilde{H}_*(\Omega \Sigma X),$$ and we may write $$H_*(\Omega \Sigma X) \cong T(\widetilde{H}_*(X)) \oplus C$$ such that the complementary module $C$ satisfies $p^{s-1} C=0$.
\end{proposition}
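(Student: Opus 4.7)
The plan is to factor $\widetilde{\eta_*}$ through the James construction and invoke Lemmas \ref{JamesHomology} and \ref{LAMBDA}. Lemma \ref{LAMBDA} says $\lambda : J(X) \to \Omega \Sigma X$ is a weak equivalence and an H-map, so $\lambda_* : \widetilde{H}_*(J(X)) \to \widetilde{H}_*(\Omega \Sigma X)$ is an isomorphism of graded associative algebras (the product on $\widetilde{H}_*(J(X))$ being induced by concatenation of tuples, and the product on $\widetilde{H}_*(\Omega \Sigma X)$ being the Pontrjagin product). I claim that $\widetilde{\eta_*} = \lambda_* \circ \JTens$. Granting this, Lemma \ref{JamesHomology} gives $\widetilde{H}_*(J(X)) \cong T(\widetilde{H}_*(X)) \oplus C$ with $p^{s-1} C = 0$, and transporting this decomposition across the algebra isomorphism $\lambda_*$ yields the desired decomposition $\widetilde{H}_*(\Omega \Sigma X) \cong T(\widetilde{H}_*(X)) \oplus \lambda_*(C)$, with the complementary module still annihilated by $p^{s-1}$.

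To verify the identification $\widetilde{\eta_*} = \lambda_* \circ \JTens$, note that both maps are algebra homomorphisms out of $T(\widetilde{H}_*(X))$: $\JTens$ is an algebra map by Lemma \ref{JamesHomology}, and $\lambda_*$ is an algebra map by the previous paragraph. By the universal property of the tensor algebra, it therefore suffices to check agreement on the generating submodule $\widetilde{H}_*(X) = \widetilde{H}_*(X)^{\otimes 1}$. From the construction in Lemma \ref{JamesHomology}, the restriction $\JTens \circ \iota_1$ is simply the map on reduced homology induced by the inclusion $X = J_1(X) \hookrightarrow J(X)$, and from the definition of $\lambda$ one has $\lambda(x) = (\gamma_x^{d(x)})^1 = \gamma_x = \eta(x)$ for any $x \in X$. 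Hence $\lambda_* \circ \JTens \circ \iota_1 = \eta_* = \widetilde{\eta_*} \circ \iota_1$, as required.

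Finally, the explicit formula for $\widetilde{\eta_*} \circ \iota_k$ follows from Lemma \ref{describeLambda}. That lemma says the composite $X^k \to J_k(X) \to J(X) \xrightarrow{\lambda} \Omega \Sigma X$ is homotopic to $m \circ \eta^k$, so on reduced homology the composite $\widetilde{H}_*(X^k) \to \widetilde{H}_*(J(X)) \xrightarrow{\lambda_*} \widetilde{H}_*(\Omega \Sigma X)$ agrees with $m_* \circ (\eta^k)_*$. Precomposing with the cross product $\times : \widetilde{H}_*(X)^{\otimes k} \to \widetilde{H}_*(X^k)$ recovers the definition of $\JTens \circ \iota_k$ on the left and yields $m_* \circ (\eta^k)_* \circ \times$ on the right, which is the claimed formula. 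There is no serious technical obstacle; the only care required is in matching algebra structures and in observing that $\JTens \circ \iota_1$ really is induced by the inclusion $J_1(X) = X \hookrightarrow J(X)$, which is transparent from how $\JTens$ is constructed.
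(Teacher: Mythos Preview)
Your proof is correct and follows essentially the same route as the paper: you factor $\widetilde{\eta_*}$ as $\lambda_* \circ \JTens$, verify this identity on generators via the universal property of the tensor algebra using the observation that $\lambda$ restricted to $J_1(X)=X$ is $\eta$, and then read off the decomposition from Lemma~\ref{JamesHomology} and the explicit formula from Lemma~\ref{describeLambda}. The paper's proof is organized the same way and invokes the same three lemmas.
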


\begin{proof} By Lemmas \ref{JamesHomology}, \ref{LAMBDA} and \ref{describeLambda}, it suffices to show that $\lambda_* \circ \JTens = \widetilde{\eta_*}$. Since both maps are algebra maps, by the universal property of the tensor algebra it further suffices to show that the composite $$\widetilde{H}_*(X) \xrightarrow{\iota_1} T(\widetilde{H}_*(X)) \xrightarrow{\JTens} \widetilde{H}_*(JX) \xrightarrow{\lambda_*} \widetilde{H}_*(\Omega \Sigma X) $$ is equal to $\eta_*$. 

To see this, first note that the composite $\widetilde{H}_*(X) \xrightarrow{\iota_1} T(\widetilde{H}_*(X)) \xrightarrow{\JTens} \widetilde{H}_*(JX)$ is equal to the map induced by the inclusion $X \longrightarrow J_1(X) \subset J(X)$ which carries $x \in X$ to the equivalence class of $x$ in $J(X)$. By definition of $\lambda$ we then have $\lambda(x) = \gamma_x$, which by definition is $\eta(x)$, as required. \end{proof}

\subsection{The effect of the evaluation map} \label{evSubs}

The goal of this section is to prove Lemma \ref{evproj}, which says that up to suspension isomorphisms, the evaluation map $\ev: \Sigma \Omega \Sigma X \longrightarrow \Sigma X$ induces the projection onto the tensors of length 1. Our strategy is to first prove Lemma \ref{niceSquare}, the point of which is that when one evaluates a concatenation of $k$ loops at some time $t$, the result only depends on one of the loops - this is the $i$ appearing in the proof of Lemma \ref{niceSquare}. We will then see that this, together with simple formal properties of the cross product, is enough to prove Lemma \ref{evproj}.

In this section, for a co-$H$-space $Y$, $c: Y \longrightarrow Y \vee Y$ denotes the comultiplication, and for a product $\prod_{i=1}^k X_i$, the map $\pi_i$ is the projection onto the $i$-th factor. In the next lemma we take the iterated comultiplication $c$ and the iterated multiplication $m$ to be parameterized so as to spend equal time on each component - this does not change anything up to homotopy.

\begin{lemma} \label{niceSquare} The following diagram commutes. \begin{center}
\begin{tabular}{c}
\xymatrix{
\Sigma X^k \ar^{c}[d] \ar^{\Sigma \eta^k}[r] & \Sigma (\Omega \Sigma X)^k \ar^{\Sigma m}[r] & \Sigma \Omega \Sigma X \ar^{\ev}[d]\\
(\Sigma X^k)^{\vee k} \ar^{\bigvee_{i=1}^k \Sigma \pi_i}[d] & & \Sigma X \\
(\Sigma X)^{\vee k} \ar^{\textrm{fold}}[r] & \Sigma X \ar^{\Sigma \eta}[r] & \Sigma \Omega \Sigma X. \ar_{\ev}[u] \\
}
\end{tabular}
\end{center} \end{lemma}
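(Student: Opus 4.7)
The plan is to verify the diagram by a direct point-level trace, supplemented by one application of the triangle identity for the adjunction $\Sigma \dashv \Omega$. The central geometric observation is that when $k$ loops are concatenated uniformly on $[0,1]$ and the resulting loop is evaluated at a time $t$ lying in the $i$-th subinterval $[(i-1)/k, i/k]$, the output depends only on the $i$-th loop, and equals its value at the rescaled time $kt - (i-1)$. With this in mind the commutativity will reduce to matching two explicit formulas.

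First I will follow the top route. The map $\Sigma \eta^k$ sends $\langle t, (x_1, \ldots, x_k) \rangle \in \Sigma X^k$ to $\langle t, (\gamma_{x_1}, \ldots, \gamma_{x_k}) \rangle$; then $\Sigma m$, with $m$ taken to be the uniform $k$-fold loop concatenation, produces $\langle t, \gamma_{x_1} \# \cdots \# \gamma_{x_k} \rangle$; and finally $\ev$ yields $(\gamma_{x_1} \# \cdots \# \gamma_{x_k})(t)$. Letting $i = i(t)$ be the unique index with $t \in [(i-1)/k, i/k]$, by the observation above this evaluates to $\gamma_{x_i}(kt - (i-1)) = \langle kt - (i-1), x_i \rangle \in \Sigma X$.

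Next I will follow the other route. Parametrizing the iterated comultiplication $c$ with the same equal-length subdivision of $[0,1]$, it sends $\langle t, \vec x \rangle$ to the point $\langle kt - (i-1), \vec x \rangle$ in the $i$-th summand of $(\Sigma X^k)^{\vee k}$; the restriction of $\bigvee_{j} \Sigma \pi_j$ to that summand is $\Sigma \pi_i$, which projects $\vec x$ onto $x_i$, giving $\langle kt - (i-1), x_i \rangle$ in the $i$-th copy of $\Sigma X$; and the fold map brings this to $\langle kt - (i-1), x_i \rangle \in \Sigma X$. This agrees with the top-route output, and the remaining arrows $\Sigma \eta$ and $\ev$ along the bottom right compose to the identity on $\Sigma X$ by the triangle identity $\ev \circ \Sigma \eta \simeq \textrm{id}_{\Sigma X}$ for the suspension-loop adjunction, so the full diagram commutes.

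The main (and only) obstacle is bookkeeping: both $c$ and $m$ carry an implicit choice of subdivision of $[0,1]$, and one must fix these consistently so that the two point-level formulas actually match; this is precisely the synchronization of parametrizations noted just before the lemma statement. Once this is arranged, no further input is needed and the equality is immediate from the definitions.
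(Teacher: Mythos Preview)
Your proof is correct and follows essentially the same point-level trace as the paper: both arguments fix the equal-time parametrizations of $c$ and $m$, identify the index $i$ with $t\in[(i-1)/k,i/k]$, and compute each composite to be $\gamma_{x_i}(kt-(i-1))=\langle kt-(i-1),x_i\rangle$. The only cosmetic difference is that you invoke the triangle identity $\ev\circ\Sigma\eta=\textrm{id}_{\Sigma X}$ for the final two arrows, whereas the paper simply evaluates $\ev\circ\Sigma\eta$ directly; note that this identity holds on the nose here (not merely up to homotopy), so your use of it is unproblematic.
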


\begin{proof} We will evaluate both composites. A point of $\Sigma X^k$ may be written in suspension coordinates as $\langle t, x_1, x_2, \dots, x_k \rangle$, for $t \in I$ and $x_i \in X$. There exists some integer $i$ with $1 \leq i \leq k$ so that $\frac{i-1}{k} \leq t \leq \frac{i}{k}$.

For the top right composite, $$\ev \circ \Sigma m \circ \Sigma \eta^k \langle t, x_1, \dots, x_k \rangle = \ev \langle t, m(\gamma_{x_1}, \dots , \gamma_{x_k}) \rangle = (\gamma_{x_1} \# \dots \# \gamma_{x_k})(t)= \gamma_{x_i}(kt - (i-1)).$$

For the bottom left composite, we first introduce some notation. For a point $y$ of a space $Y$, we write $(y)_i$ for the image of $y$ under the inclusion of the $i$-th wedge summand in $Y \longrightarrow Y^{\vee k}$. With this notation, taking $Y = \Sigma X^k$, we have $c \langle t, x_1, \dots, x_k \rangle = (\langle kt-(i-1), x_1, \dots, x_k \rangle)_i $. Therefore, $$\ev \circ \Sigma \eta \circ \textrm{fold} \circ (\bigvee_{i=1}^k \Sigma \pi_i) \circ c \langle t, x_1, \dots, x_k \rangle = \ev \circ \Sigma \eta \circ \textrm{fold}(\langle kt-(i-1), x_i \rangle)_i$$ $$=\ev \circ \Sigma \eta \langle kt-(i-1), x_i \rangle = \gamma_{x_i}(kt-(i-1)),$$ as required. \end{proof}

\begin{lemma} \label{crossWorks} Let $X$ be a space. The composite $$\widetilde{H}_*(X)^{\otimes k} \xrightarrow{\times} \widetilde{H}_*(X^k) \xrightarrow{(\pi_i)_*} \widetilde{H}_*(X)$$ of the cross product with any projection is trivial for $k \geq 2$. \end{lemma}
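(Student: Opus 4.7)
The plan is to exploit naturality of the cross product together with the observation that $\pi_i$ is insensitive to crushing any coordinate other than the $i$-th.

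Since $k \geq 2$, I can pick some index $j \neq i$. Let $\phi_j : X^k \longrightarrow X^k$ be the map that replaces the $j$-th coordinate with the basepoint and leaves all others alone. Then $\pi_i \circ \phi_j = \pi_i$ on the nose, because $\pi_i$ only reads off the $i$-th coordinate. Hence on homology $(\pi_i)_* = (\pi_i)_* \circ (\phi_j)_*$, and it suffices to show that $(\phi_j)_*$ annihilates the image of $\times$.

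To see this, note that $\phi_j$ is a product of $k$ self-maps of $X$, namely the identity in every slot other than the $j$-th, and the constant map $c$ to the basepoint in slot $j$. By naturality (equivalently, bilinearity) of the cross product,
$$(\phi_j)_*(a_1 \times \cdots \times a_k) = a_1 \times \cdots \times c_*(a_j) \times \cdots \times a_k.$$
Because $c$ factors through a point and each $a_j \in \widetilde{H}_*(X)$ is a reduced class, $c_*(a_j) = 0$, and the whole cross product vanishes. Combining with the previous paragraph gives $(\pi_i)_* \circ \times = 0$, as required.

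I do not anticipate any serious obstacle. The only minor point to keep track of is the reduced-versus-unreduced distinction for the cross product, but naturality holds in both settings and the vanishing of $c_*$ on reduced homology is precisely what makes the reduced formulation the natural one here.
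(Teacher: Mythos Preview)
Your proof is correct and follows essentially the same route as the paper: both arguments use naturality (multilinearity) of the cross product together with the fact that a constant map induces zero on reduced homology. The only cosmetic difference is that the paper writes $\pi_i$ directly as a product map $\prod_{j=1}^k f_j$ with $f_j$ constant for every $j \neq i$, whereas you insert the intermediate self-map $\phi_j$ and then apply $\pi_i$; the underlying mechanism is identical.
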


\begin{proof} Up to homeomorphism, $X$ may be regarded as the space $\prod_{j=1}^k Y_j$, where $Y_j = *$ for $j \neq i$ and $Y_i = X$. Under this identification, $\pi_i$ is identified with the map $\prod_{j=1}^k f_j : X^k \longrightarrow \prod_{j=1}^k Y_j$, where $f_j$ is the identity on $X$ when $j=i$, and is the trivial map otherwise.

The composite of maps $(\prod_{j=1}^k f_j)_* \circ \times$ is the cross product of homomorphisms $(f_1)_* \times (f_2)_* \times \dots \times (f_k)_*$. Cross product of homomorphisms is $k$-multilinear, and since $k \geq 2$ there is at least one $j$ with $f_j$ equal to the constant map, hence $(f_j)_*=0$. This means that $(\prod_{j=1}^k f_j)_* \circ \times$ is trivial for $k \geq 2$, as required. \end{proof}

\begin{corollary} \label{crossPlus} Let $X$ be a space. The composite $$\widetilde{H}_*(X)^{\otimes k} \xrightarrow{\times} \widetilde{H}_*(X^k) \xrightarrow{\sigma} \widetilde{H}_*(\Sigma X^k) \xrightarrow{c_*} \widetilde{H}_*((\Sigma X^k)^{\vee k}) \xrightarrow{(\bigvee_{i=1}^k \Sigma \pi_i)_*} \widetilde{H}_*((\Sigma X)^{\vee k})$$ is trivial for $k \geq 2$. \end{corollary}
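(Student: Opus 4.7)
The plan is to reduce the statement to Lemma \ref{crossWorks} by analyzing the composite one piece at a time, taking advantage of naturality of the suspension isomorphism and the standard behavior of the coH-space comultiplication on reduced homology.

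First, I would recall that for any coH-space $Y$ with comultiplication $c: Y \longrightarrow Y \vee Y$, the induced map $c_*$ on reduced homology, composed with the natural splitting $\widetilde{H}_*(Y \vee Y) \cong \widetilde{H}_*(Y) \oplus \widetilde{H}_*(Y)$, is the diagonal $z \mapsto (z,z)$. The analogous statement holds for the $k$-fold iterated comultiplication $c: \Sigma X^k \longrightarrow (\Sigma X^k)^{\vee k}$: under the identification $\widetilde{H}_*((\Sigma X^k)^{\vee k}) \cong \bigoplus_{i=1}^k \widetilde{H}_*(\Sigma X^k)$, the map $c_*$ is the $k$-fold diagonal. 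This is completely formal and follows from the axioms of a co-$H$-space together with the fact that both wedge inclusions split in homology.

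Next, I would observe that under the identifications $\widetilde{H}_*((\Sigma X^k)^{\vee k}) \cong \bigoplus_{i=1}^k \widetilde{H}_*(\Sigma X^k)$ and $\widetilde{H}_*((\Sigma X)^{\vee k}) \cong \bigoplus_{i=1}^k \widetilde{H}_*(\Sigma X)$, the map $(\bigvee_{i=1}^k \Sigma \pi_i)_*$ is just the direct sum $\bigoplus_{i=1}^k (\Sigma \pi_i)_*$, since each wedge summand of the domain is carried into the corresponding wedge summand of the codomain. Combining this with the previous step, for any $z \in \widetilde{H}_*(\Sigma X^k)$ we obtain
$$(\bigvee_{i=1}^k \Sigma \pi_i)_* \circ c_*(z) = ((\Sigma \pi_1)_*(z), (\Sigma \pi_2)_*(z), \dots, (\Sigma \pi_k)_*(z)).$$

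Finally, by naturality of the suspension isomorphism, $(\Sigma \pi_i)_* \circ \sigma = \sigma \circ (\pi_i)_*$, so the full composite of the corollary sends $a_1 \otimes \cdots \otimes a_k$ to the tuple whose $i$-th entry is $\sigma \circ (\pi_i)_* \circ \times(a_1 \otimes \cdots \otimes a_k)$. For $k \geq 2$ every such entry vanishes by Lemma \ref{crossWorks}, and the composite is zero. I do not anticipate a significant obstacle; the only place where anything non-formal happens is the use of Lemma \ref{crossWorks}, and the rest is bookkeeping with the splittings of $\widetilde{H}_*(Y \vee Y)$ and naturality of $\sigma$.
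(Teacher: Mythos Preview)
Your proposal is correct and follows essentially the same route as the paper's own proof: identify $c_*$ with the $k$-fold diagonal via the wedge splitting, rewrite $(\bigvee_i \Sigma\pi_i)_*$ as the direct sum $\bigoplus_i (\Sigma\pi_i)_*$, use naturality of $\sigma$ to pull the suspension through, and conclude with Lemma~\ref{crossWorks}. The paper's argument is organized in exactly this way, so there is nothing substantive to add.
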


\begin{proof} For a space $Y$, let $p_i : Y^{\vee k} \longrightarrow Y$ be the projection onto the $i$-th wedge summand. The comultiplication $c$ satisfies $p_i \circ c \simeq id_{\Sigma X^k}$ for each $i$, so on homology we have $$c_* : \widetilde{H}_*(\Sigma X^k) \longrightarrow \widetilde{H}_*((\Sigma X^k)^{\vee k}) \cong \bigoplus^{k}_{i=1} \widetilde{H}_*(\Sigma X^k)$$ $$x \longmapsto (x, x, \dots, x).$$

That is, $c_*$ may be identified with the diagonal map $\Delta : \widetilde{H}_*(\Sigma X^k) \longrightarrow \bigoplus^{k}_{i=1} \widetilde{H}_*(\Sigma X^k)$.

Thus, $$(\bigvee_{i=1}^k \Sigma \pi_i)_* \circ c_* \circ \sigma \circ \times (x_1 \otimes \dots \otimes x_k) = (\bigvee_{i=1}^k \Sigma \pi_i)_* \circ c_*( \sigma (x_1 \times \dots \times x_k))$$ $$ = \bigoplus_{i=1}^k (\Sigma \pi_i)_* \circ \Delta (\sigma (x_1 \times \dots \times x_k))=0,$$ since by Lemma \ref{crossWorks} we have $$(\Sigma \pi_i)_* (\sigma (x_1 \times \dots \times x_k)) = \sigma \circ (\pi_i)_* (x_1 \times \dots \times x_k)=0.$$

This completes the proof. \end{proof}

\begin{lemma} \label{evproj} The composite $$T(\widetilde{H}_*(X)) \xrightarrow{\widetilde{\eta_*}} \widetilde{H}_*(\Omega \Sigma X) \xrightarrow{\sigma} \widetilde{H}_*(\Sigma \Omega \Sigma X) \xrightarrow{\ev_*} \widetilde{H}_*(\Sigma X) \xrightarrow{\sigma^{-1}} \widetilde{H}_*(X)$$ is equal to the projection $\zeta_1$.
\end{lemma}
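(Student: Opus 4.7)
The plan is to verify the identity on each tensor-length summand $\widetilde{H}_*(X)^{\otimes k}$ of $T(\widetilde{H}_*(X))$ separately. That is, I will compose the stated map with each inclusion $\iota_k$ and show that the result is the identity when $k=1$ and zero when $k\geq 2$; this is exactly what it means for the composite to coincide with $\zeta_1$.

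For the case $k=1$, note that $\widetilde{\eta_*}\circ \iota_1=\eta_*$. By naturality of the suspension isomorphism, $\sigma\circ \eta_* = (\Sigma\eta)_*\circ\sigma$, so the composite in question becomes $\sigma^{-1}\circ\ev_*\circ(\Sigma\eta)_*\circ\sigma = \sigma^{-1}\circ(\ev\circ \Sigma\eta)_*\circ\sigma$. The triangle identity for the $\Sigma\dashv\Omega$ adjunction gives $\ev\circ \Sigma\eta=\mathrm{id}_{\Sigma X}$, so this composite is $\sigma^{-1}\circ\sigma=\mathrm{id}$, as required.

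For $k\geq 2$, I would use Proposition~\ref{BSGen} to rewrite
$$\widetilde{\eta_*}\circ \iota_k = m_*\circ (\eta^k)_*\circ \times,$$
and then push the $\sigma$ inward by naturality to reach $(\Sigma m\circ \Sigma \eta^k)_*\circ \sigma\circ \times$. Now Lemma~\ref{niceSquare}, combined once more with the triangle identity $\ev\circ \Sigma\eta=\mathrm{id}_{\Sigma X}$, identifies
$$\ev\circ \Sigma m\circ \Sigma\eta^k \;=\; \mathrm{fold}\circ \bigl(\textstyle\bigvee_{i=1}^k \Sigma\pi_i\bigr)\circ c,$$
so the composite of interest becomes
$$\sigma^{-1}\circ \mathrm{fold}_*\circ \bigl(\textstyle\bigvee_{i=1}^k \Sigma\pi_i\bigr)_* \circ c_*\circ \sigma\circ \times.$$
But Corollary~\ref{crossPlus} tells us that $(\bigvee_{i=1}^k \Sigma\pi_i)_*\circ c_*\circ \sigma\circ \times$ is already zero for $k\geq 2$, which kills the whole composite.

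The main obstacle is really just bookkeeping: making sure that the naturality of $\sigma$ is applied cleanly so that the space-level identity of Lemma~\ref{niceSquare} can be transferred across the homology suspension, and that the restrictions $\widetilde{\eta_*}\circ\iota_k$ are in the correct form demanded by Proposition~\ref{BSGen}. Once these alignments are in place, the proof is a direct assembly of results already proved in this section, with the triangle identity of the $\Sigma\dashv\Omega$ adjunction playing the central simplifying role in both cases.
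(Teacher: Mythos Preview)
Your proposal is correct and follows essentially the same approach as the paper: verify the identity on each $\iota_k$ separately, handling $k=1$ via the triangle identity $\ev\circ\Sigma\eta=\mathrm{id}_{\Sigma X}$, and handling $k\geq 2$ by combining the description of $\widetilde{\eta_*}\circ\iota_k$ from Proposition~\ref{BSGen} with Lemma~\ref{niceSquare} and Corollary~\ref{crossPlus}. The only cosmetic difference is that you simplify $\ev\circ\Sigma\eta$ to the identity immediately after invoking Lemma~\ref{niceSquare}, whereas the paper leaves both $\ev_*$ terms in a single large diagram; the content is the same.
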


\begin{proof} Write $\Gamma$ for the above composite. We must show that $\Gamma \circ \iota_k$ is the identity map on $H_*(X)$ when $k=1$, and is $0$ otherwise.

For the $k=1$ statement, note that $\widetilde{\eta_*} \circ \iota_1 = \eta_*$ (this is the definition of $\widetilde{\eta_*}$). We may therefore write $$ \Gamma \circ \iota_1 = \sigma^{-1} \circ \textrm{ev}_* \circ \sigma \circ \widetilde{\eta_*} \circ \iota_1 = \sigma^{-1} \circ \textrm{ev}_* \circ \sigma \circ \eta_* = \sigma^{-1} \circ \textrm{ev}_* \circ (\Sigma \eta)_* \circ \sigma,$$ and by the triangle identities for the adjunction $\Sigma \dashv \Omega$ we have a commuting diagram \begin{center}
\begin{tabular}{c}
\xymatrix{
\Sigma X \ar^{id_{\Sigma X}}[dr] \ar^{\Sigma \eta}[r] & \Sigma \Omega \Sigma X \ar^{\textrm{ev}}[d] \\
& \Sigma X.
}
\end{tabular}
\end{center}

Thus, $\Gamma \circ \iota_1 = \sigma^{-1} \circ \sigma = id_{H_*(X)}$, as we required.

Now let $k>1$. Juxtaposing the diagram of Lemma \ref{niceSquare} (after taking homology) with the result of Corollary \ref{crossPlus} gives a commuting diagram \begin{center}
\begin{tabular}{c}
\xymatrix{
\widetilde{H}_*(X)^{\otimes k} \ar^{\times}[r] \ar^{0}[ddrr] & \widetilde{H}_*(X^k) \ar^{\sigma}[r] & \widetilde{H}_*(\Sigma X^k) \ar^{c_*}[d] \ar^{(\Sigma \eta^k)_*}[r] & \widetilde{H}_*(\Sigma (\Omega \Sigma X)^k) \ar^{(\Sigma m)_*}[r] & \widetilde{H}_*(\Sigma \Omega \Sigma X) \ar^{\ev_*}[d] \\
& & \widetilde{H}_*((\Sigma X^k)^{\vee k}) \ar^{(\bigvee_{i=1}^k \Sigma \pi_i)_*}[d] & & \widetilde{H}_*(\Sigma X) \\
& & \widetilde{H}_*((\Sigma X)^{\vee k}) \ar^{\textrm{fold}_*}[r] & \widetilde{H}_*(\Sigma X) \ar^{(\Sigma \eta)_*}[r] & \widetilde{H}_*(\Sigma \Omega \Sigma X). \ar_{\ev_*}[u] \\
}
\end{tabular}
\end{center}

The description of $\widetilde{\eta_*} \circ \iota_k$ of Proposition \ref{BSGen} implies that the top-right route round the diagram is equal to $\sigma \circ \Gamma \circ \iota_k$. The diagram shows that this factors through the zero map, so $\sigma \circ \Gamma \circ \iota_k = 0$, and since $\sigma$ is an isomorphism, this implies that $\Gamma \circ \iota_k$ is itself zero, which completes the $k>1$ case and hence the proof. \end{proof}

\subsection{Loops on homology injections} \label{FinnickySubs}

The goal of this section is to prove Proposition \ref{injection2}. We first prove two lemmas. Recall that $\iota_i : V^{\otimes i} \longrightarrow T(V)$ is the inclusion, and that $\zeta_i : T(V) \longrightarrow V^{\otimes i}$ is the projection. Similarly, let $\iota_{\leq k}$ and $\zeta_{\leq k}$ be the inclusion and projection associated to the submodule $\bigoplus_{i=1}^k V^{\otimes i}$ of $T(V)$.

\begin{lemma} \label{leadingTrims} Let $a_1, a_2, \dots a_k$ be elements of a tensor algebra $T(V)$. We have that $$\zeta_i (a_1 \otimes \dots \otimes a_k) = \begin{cases} \zeta_1(a_1) \otimes \dots \otimes \zeta_1(a_k) & i=k \\
0 & i<k.
\end{cases} $$ \end{lemma}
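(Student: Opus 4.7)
The plan is straightforward, since this is a bookkeeping statement about how the weight grading interacts with the multiplication on the reduced tensor algebra $T(V) = \bigoplus_{n \geq 1} V^{\otimes n}$. First I would decompose each $a_j$ according to weight, writing $a_j = \sum_{n \geq 1} a_j^{(n)}$ with $a_j^{(n)} \in V^{\otimes n}$, so that $\zeta_n(a_j) = a_j^{(n)}$ by definition of $\zeta_n$.

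Next I would use the fact that the multiplication in $T(V)$ is concatenation of tensors, and is therefore additive in weight: a product $a_1^{(n_1)} \otimes \dots \otimes a_k^{(n_k)}$ lies in $V^{\otimes(n_1 + \dots + n_k)}$. Expanding by multilinearity of the product of $T(V)$,
$$\zeta_i(a_1 \otimes \dots \otimes a_k) = \sum_{\substack{n_1 + \dots + n_k = i \\ n_j \geq 1}} a_1^{(n_1)} \otimes \dots \otimes a_k^{(n_k)}.$$

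Finally I would examine when the indexing set is non-empty. The constraint $n_j \geq 1$ comes from the fact that the paper's tensor algebra is \emph{reduced}, with no degree-$0$ summand, so each $a_j$ has no weight-$0$ part. When $i < k$ there are no tuples $(n_1, \dots, n_k)$ of positive integers summing to $i$, so the sum is empty and $\zeta_i(a_1 \otimes \dots \otimes a_k) = 0$, giving the second case. When $i = k$, the unique such tuple is $n_j = 1$ for all $j$, so the sum collapses to $a_1^{(1)} \otimes \dots \otimes a_k^{(1)} = \zeta_1(a_1) \otimes \dots \otimes \zeta_1(a_k)$, giving the first case. There is no real obstacle here; the only point requiring attention is remembering the reducedness convention (without it the $i = k$ case would pick up extra contributions from higher-weight pieces cancelled by weight-$0$ factors elsewhere).
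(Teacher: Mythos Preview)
Your proof is correct and uses the same key observation as the paper---that multiplication in the reduced tensor algebra is additive in weight with all weights at least $1$. The only difference is cosmetic: the paper packages this as an induction on $k$ via the two-variable formula $\zeta_j(a \otimes b) = \sum_{i=1}^{j-1} \zeta_i(a) \otimes \zeta_{j-i}(b)$, whereas you expand all $k$ factors at once.
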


\begin{proof} By definition, $T(V) \cong \bigoplus_{i=1}^{\infty} V^{\otimes i}$, and the maps $\zeta_i$ are precisely the projections onto these summands. Further, the multiplication in $T(V)$ restricts to maps $V^{\otimes i} \otimes V^{\otimes j} \longrightarrow V^{\otimes (i+j)}$, which is to say that it is additive in weight. This gives the formula $\zeta_k(a \otimes b) = \sum_{i=1}^{k-1} \zeta_i(a) \otimes \zeta_{k-i}(b)$, which we will use to induct.

When $k=1$ the result is automatic. Assuming the result for $k-1$, we have $$\zeta_j(a_1 \otimes \dots \otimes a_k) = \sum_{i=1}^{j-1} \zeta_i(a_1 \otimes \dots \otimes a_{k-1}) \otimes \zeta_{j-i}(a_k).$$ 

By induction $\zeta_i(a_1 \otimes \dots \otimes a_{k-1}) = 0$ for $i<k-1$, so the above is $0$ when $j<k$ and when $j=k$ it becomes $$\zeta_{k-1}(a_1 \otimes \dots \otimes a_{k-1}) \otimes \zeta_1(a_k) = \zeta_1(a_1) \otimes \dots \otimes \zeta_1(a_{k-1}) \otimes \zeta_1(a_k),$$ by induction, as required. \end{proof}

The following lemma does not depend on the algebra structure in the tensor algebras; only on the fact that tensor algebras are graded by weight. Nonetheless, we will state it only for tensor algebras because we already have the necessary notation. It formalizes the sort of `leading terms' argument that we wish to make in proving Proposition \ref{injection2}.

\begin{lemma} \label{leadingTerms} Let $f: T(A) \longrightarrow T(B)$ be a homomorphism of $\Zps$-modules (not necessarily of algebras) with $A$ free. Suppose that $p^{s-1}\zeta_j \circ f \circ \iota_k = 0$ whenever $j<k$ and that for each $k \in \mathbb{N}$, the map $\zeta_k \circ f \circ \iota_k$ is an injection. Then $f$ is also an injection. \end{lemma}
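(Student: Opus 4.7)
The approach is a leading-coefficient argument adapted to $p$-torsion: I will project onto a carefully chosen weight to detect nonzero elements, using the hypothesis to kill competing terms after pre-multiplication by $p^{s-1}$. First I would note that since $A$ is free over $\Zps$ each tensor power $A^{\otimes k}$ is free, and hence so is $T(A) = \bigoplus_{k\geq 1} A^{\otimes k}$. By the argument already used inside the proof of Lemma \ref{factorTensor}, it then suffices to prove that $f|_{p^{s-1}T(A)}$ is injective, i.e.\ that $p^{s-1}f(x) \neq 0$ for every $x \in T(A) \setminus pT(A)$.

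Fix such an $x$ and decompose it as $x = \sum_k \iota_k(a_k)$ with $a_k \in A^{\otimes k}$. Since $x \notin pT(A) = \bigoplus_k p A^{\otimes k}$, there is a smallest $M$ such that $a_M \notin p A^{\otimes M}$. I would then show that $\zeta_M(p^{s-1} f(x)) \neq 0$ by expanding
\[
\zeta_M(p^{s-1} f(x)) = \sum_k p^{s-1} (\zeta_M \circ f \circ \iota_k)(a_k)
\]
and analyzing each term separately. The terms with $k > M$ vanish by the hypothesis $p^{s-1}\zeta_M \circ f \circ \iota_k = 0$ (applied with $j = M < k$); the terms with $k < M$ vanish because $a_k \in p A^{\otimes k}$ by minimality of $M$, hence $p^{s-1} a_k \in p^s A^{\otimes k} = 0$; and the $k = M$ term equals $(\zeta_M \circ f \circ \iota_M)(p^{s-1} a_M)$, which is nonzero by freeness of $A^{\otimes M}$ (so $a_M \notin p A^{\otimes M}$ forces $p^{s-1} a_M \neq 0$) combined with the injectivity hypothesis on $\zeta_M \circ f \circ \iota_M$.

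The main subtlety is choosing $M$ correctly. The direction of the hypothesis forces projection onto a \emph{lowest} nonzero weight rather than a top one, but the naive choice of the smallest $k$ with $a_k \neq 0$ can fail: if that $a_k$ were divisible by $p$, then pre-multiplication by $p^{s-1}$ would annihilate it before projection, leaving no control. Selecting $M$ instead as the smallest weight at which $x$ has a component outside $p A^{\otimes M}$ is precisely what allows the $p^{s-1}$-trick to simultaneously annihilate the problematic $k < M$ terms (via $p^s = 0$) and the $k > M$ terms (via the hypothesis), while preserving nonzeroness at weight $M$.
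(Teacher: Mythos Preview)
Your proof is correct and follows the same strategy as the paper: reduce to injectivity on $p^{s-1}T(A)$ by freeness, then exploit the hypotheses to isolate a single weight component that survives. The paper packages this as an induction on the filtration $\bigoplus_{i\leq k}A^{\otimes i}$, peeling off the top weight at each step, while you instead locate the minimal weight $M$ with $a_M\notin pA^{\otimes M}$ directly; this is a cosmetic reorganization rather than a different argument.
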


\begin{proof} Firstly, since $T(A)$ is a free $\Zps$-module, it suffices to show that if $f(p^{s-1} x) = 0$, for $x \in T(A)$, then $p^{s-1} x = 0$. This is precisely showing injectivity of the restriction of $f$ to $p^{s-1}T(A)$. The module $T(A)$ is filtered by the submodules $\bigoplus_{i=1}^k A^{\otimes i}$ for $k \in \mathbb{N}$, so it further suffices to show that each map $$\zeta_{\leq k} \circ f \circ \iota_{\leq k} : p^{s-1} \bigoplus_{i=1}^k A^{\otimes i} \longrightarrow p^{s-1} \bigoplus_{i=1}^k B^{\otimes i}$$ is injective.

We proceed by induction. The case $k=1$ is immediate, so assume that the result is known for $k-1$. Write $\bigoplus_{i=1}^k A^{\otimes i} \cong \bigoplus_{i=1}^{k-1} A^{\otimes i} \oplus A^{\otimes k}$, so that $\iota_{\leq k}$ is identified with $\iota_{\leq (k-1)} \oplus \iota_k$. Suppose that $f(y)=0$ for $y \in p^{s-1} \bigoplus_{i=1}^k A^{\otimes i}$, so that there exists $x \in \bigoplus_{i=1}^k A^{\otimes i}$ with $y=p^{s-1}x$. We must show that $y=0$. Write $x = x' + x_k$, for $x' \in \bigoplus_{i=1}^{k-1} A^{\otimes i}$ and  $x_k \in A^{\otimes k}$. Now, $$\zeta_{\leq (k-1)} \circ f \circ \iota_{\leq k} (y) = p^{s-1} \zeta_{\leq (k-1)} \circ f \circ \iota_{\leq k}(x) $$ $$= p^{s-1} \zeta_{\leq (k-1)} \circ f( \iota_{\leq (k-1)}x' + \iota_k(x_k)) = \zeta_{\leq (k-1)} \circ f(p^{s-1} \iota_{\leq (k-1)}x'),$$ since $p^{s-1}\zeta_j \circ f \circ \iota_k = 0$ for $j<k$. By inductive hypothesis, this implies that $p^{s-1} x' = 0$, so $y = x_k$, and $$\zeta_k \circ f \circ \iota_{\leq k} (y) = \zeta_k \circ f \circ \iota_{ k} (p^{s-1} x_k).$$

By assumption, $\zeta_k \circ f \circ \iota_{ k}$ is an injection, so $p^{s-1} x_k = 0$, and therefore $y=0$, as required. \end{proof}

\begin{proposition} \label{injection2} Let $X$ be a connected $CW$-complex, let $p$ be an odd prime, and let $s \leq r \in \mathbb{N}$. Let $\mu: P^{n+1}(p^r) \longrightarrow \Sigma X$ be a continuous map. If the induced map $$\mu_* : H_*(P^{n+1}(p^r);\Zps) \longrightarrow H_*(\Sigma X;\Zps)$$ is an injection, then $$(\Omega \mu)_* : H_*(\Omega P^{n+1}(p^r);\Zps) \longrightarrow H_*(\Omega \Sigma X;\Zps)$$ is also an injection. \end{proposition}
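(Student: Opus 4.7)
The plan is to analyse $(\Omega \mu)_*$ via the tensor-algebra structure on both sides. By Theorem \ref{BS}, since $\widetilde{H}_*(P^n(p^r);\Zps)$ is free over $\Zps$, we have $H_*(\Omega P^{n+1}(p^r);\Zps) \cong T(A)$ where $A = \widetilde{H}_*(P^n(p^r);\Zps)$. By Proposition \ref{BSGen}, $H_*(\Omega \Sigma X;\Zps) \cong T(B) \oplus C$ with $B = \widetilde{H}_*(X;\Zps)$ and $p^{s-1}C = 0$, and the Bott-Samelson map $\widetilde{\eta_*}$ is the inclusion of the $T(B)$ summand. Let $\overline{\mu}: P^n(p^r) \to \Omega \Sigma X$ be the adjoint of $\mu$, so that $(\Omega \mu)_*$ is a Pontrjagin algebra map with $(\Omega \mu)_* \circ \eta_* = \overline{\mu}_*$, and $(\Omega \mu)_*$ is determined by $\overline{\mu}_*$ on the generators of $T(A)$. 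Write $\overline{\mu}_* = \alpha + \gamma$ with $\alpha: A \to T(B)$ and $\gamma: A \to C$.

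My first step would be to show $\zeta_1 \circ \alpha: A \to B$ is injective. The counit--unit identity $\mu = \ev \circ \Sigma \overline{\mu}$ together with naturality of suspension gives $E \circ \overline{\mu}_* = \sigma^{-1} \circ \mu_* \circ \sigma$, where $E = \sigma^{-1} \circ \ev_* \circ \sigma$, and the right-hand side is injective by hypothesis. Lemma \ref{evproj} identifies $E$ restricted to the $T(B)$-summand with $\zeta_1$. Applying Lemma \ref{factorTensor} (with $f = \overline{\mu}_*$ and $g = E$, using $p^{s-1}C = 0$) then yields injectivity of $\zeta_1 \circ \alpha$.

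Next I would extend $\alpha$ to an algebra map $\widetilde{\alpha}: T(A) \to T(B)$ and show it is injective via Lemma \ref{leadingTerms}. Since each $\alpha(a_i) \in T(B)$ is supported in weights $\geq 1$, the product $\alpha(a_1) \cdots \alpha(a_k) \in T(B)$ is supported in weights $\geq k$, giving $\zeta_j \circ \widetilde{\alpha} \circ \iota_k = 0$ for $j < k$ and $\zeta_k \circ \widetilde{\alpha} \circ \iota_k = (\zeta_1 \circ \alpha)^{\otimes k}$. Since $A$ is free over $\Zps$, Lemma \ref{summand} makes $\zeta_1 \circ \alpha$ split injective, so its $k$-fold tensor power is also split injective, and Lemma \ref{leadingTerms} applies.

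Finally, I would compare $(\Omega \mu)_*$ with $\widetilde{\alpha}$. Expanding $(\Omega \mu)_* \circ \iota_k(a_1 \otimes \cdots \otimes a_k) = (\alpha + \gamma)(a_1) \cdots (\alpha + \gamma)(a_k)$, the pure-$\alpha$ term is precisely $\widetilde{\alpha}(a_1 \otimes \cdots \otimes a_k) \in T(B)$, and every other term contains at least one factor in $C$, so by $\Zps$-bilinearity of the Pontrjagin product and $p^{s-1}C = 0$ is annihilated by $p^{s-1}$. Thus $p^{s-1}(\Omega \mu)_* = p^{s-1} \widetilde{\alpha}$, with $\widetilde{\alpha}$ composed with $T(B) \hookrightarrow T(B) \oplus C$. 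Since $T(A)$ is free over $\Zps$, every nonzero $z \in T(A)$ has some multiple $p^k z \in p^{s-1}T(A) \setminus \{0\}$; for $x = p^{s-1} y$ we have $(\Omega \mu)_*(x) = \widetilde{\alpha}(x)$, so injectivity of $\widetilde{\alpha}$ forces $x = 0$, and injectivity of $(\Omega \mu)_*$ follows. The main obstacle is the presence of the summand $C$ in Proposition \ref{BSGen}, which need not be closed under multiplication so prevents a clean tensor-algebra naturality argument; the fix is to isolate the leading piece $\widetilde{\alpha}: T(A) \to T(B)$ and discard the rest as $p^{s-1}$-torsion, which is only legitimate thanks to freeness of $T(A)$.
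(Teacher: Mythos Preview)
Your argument is correct and follows essentially the same route as the paper: Bott--Samelson on the domain, Proposition~\ref{BSGen} on the target, the triangle identity together with Lemma~\ref{evproj} and Lemma~\ref{factorTensor} to establish injectivity of the weight-one part, and then a leading-terms argument via Lemma~\ref{leadingTerms} combined with the observation that the $C$-contributions vanish after multiplying by $p^{s-1}$. The only organisational difference is that you first isolate the auxiliary algebra map $\widetilde{\alpha}:T(A)\to T(B)$, prove it injective, and afterwards compare $(\Omega\mu)_*$ with $\widetilde{\alpha}$ on $p^{s-1}T(A)$; the paper instead applies Lemma~\ref{leadingTerms} directly to $\tau\circ(\Omega\mu)_*\circ\widetilde{\eta_*}$, absorbing your final comparison step into the verification of its hypotheses. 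Your final sentence is a little compressed (the precise statement is that for nonzero $z$ in a free $\Zps$-module there exists $k\ge 0$ with $p^k z\in p^{s-1}T(A)\setminus\{0\}$), but the logic is sound.
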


The principal difficulty in the proof is that $\textrm{Im}(\mu_*)$ might not be contained in the tensor algebra $T(\widetilde{H}_*( X;\Zps))$ inside $\widetilde{H}_*(\Omega \Sigma X ; \Zps)$. We navigate this using the condition $p^{s-1} C = 0$ of Proposition \ref{BSGen}, which prevents the complementary part $C$ from interfering too much. This proposition is much simpler to prove if one assumes that the map $\mu$ is a suspension, but this assumption is not necessary.

\begin{proof} Homology is taken with $\Zps$-coefficients throughout. By the Bott-Samelson theorem (Theorem \ref{BS}), we have an isomorphism $$\widetilde{\eta_*} : T(\widetilde{H}_*(P^n(p^r))) \longrightarrow H_*(\Omega P^{n+1}(p^r)),$$ so it suffices to show that $(\Omega \mu)_* \circ \widetilde{\eta_*}$ is an injection. By definition, $(\Omega \mu)_* \circ \widetilde{\eta_*}$ is the unique map of algebras extending $$(\Omega \mu)_* \circ \eta_* : \widetilde{H}_*(P^n(p^r)) \longrightarrow H_*(\Omega \Sigma X),$$ and by the triangle identities for the adjunction $\Sigma \dashv \Omega$, we have that $(\Omega \mu) \circ \eta = \overline{\mu}$. Thus, $(\Omega \mu)_* \circ \widetilde{\eta_*}$ is the unique map of algebras extending $\overline{\mu}_*$.

The other triangle identity tells us that we have a commuting diagram \begin{center}
\begin{tabular}{c}
\xymatrix{
P^{n+1}(p^r) \ar^{\Sigma\overline{\mu}}[r] \ar_{\mu}[dr] & \Sigma \Omega \Sigma X \ar^{\ev}[d] \\
& \Sigma X.
}
\end{tabular}
\end{center} By assumption, $\mu$ induces an injection on homology, so $\ev \circ (\Sigma \overline{\mu})$ must also induce an injection on homology.

The next step is to turn the problem into one about tensor algebras. Proposition \ref{BSGen} gives a module decomposition of $\widetilde{H}_*(\Omega \Sigma X)$ as the direct sum $T(\widetilde{H}_*(X)) \oplus C$ with $p^{s-1} C = 0$. Under this decomposition, the inclusion associated to the factor $T(\widetilde{H}_*(X))$ is $\widetilde{\eta_*}$. Write $\splitting$ for the projection. Consider the diagram \begin{center}
\begin{tabular}{c}
\xymatrix{ 
& T(\widetilde{H}_*(X)) \ar@/^/^{\sigma \circ \widetilde{\eta_*}}[d] \\
\widetilde{H}_*(P^{n+1}(p^r)) \ar^{(\Sigma\overline{\mu})_*}[r] \ar_{\mu_*}[dr] & \widetilde{H}_*(\Sigma \Omega \Sigma X) \ar^{\ev_*}[d] \ar@/^/^{\splitting \circ \sigma^{-1}}[u] \\
& \widetilde{H}_*(\Sigma X).
}
\end{tabular}
\end{center}

The maps $\sigma \circ \widetilde{\eta_*}$ and $\splitting \circ \sigma^{-1}$ differ from $\widetilde{\eta_*}$ and $\splitting$ only up to suspension isomorphisms, so they are the inclusion and projection associated to the decomposition of $\widetilde{H}_*(\Sigma \Omega \Sigma X)$ obtained by suspending that of Proposition \ref{BSGen}. Lemma \ref{factorTensor} (with $g = \ev_*$, $f = (\Sigma \overline{\mu})_*$, $i_A = \sigma \circ \widetilde{\eta_*}$, and $\pi_A = \splitting \circ \sigma^{-1}$) then tells us that the whole composite $\ev_* \circ (\sigma \circ \widetilde{\eta_*}) \circ (\splitting \circ \sigma^{-1}) \circ (\Sigma \overline{\mu})_*$ is an injection. Furthermore, by Lemma \ref{evproj}, the composite $\ev_* \circ (\sigma \circ \widetilde{\eta_*})$ is identified via suspension isomorphisms with the projection $\zeta_1 : T(\widetilde{H}_*(X)) \longrightarrow \widetilde{H}_*(X)$, so the composite $\zeta_1 \circ \splitting \circ \overline{\mu}_*$ is an injection.

Let $a$ and $b$ form a basis of the free $\Zps$-module $\widetilde{H}_*(P^{n}(p^r))$. By Lemma \ref{summand}, the images of $a$ and $b$ under $\zeta_1 \circ \splitting \circ \overline{\mu}_*$ generate a summand isomorphic to $(\Zps)^2$ inside $\widetilde{H}_*(X)$.

Since $\widetilde{H}_*(P^{n}(p^r))$ is free on $a$ and $b$, a basis of $T(\widetilde{H}_*(P^{n}(p^r)))$ consists of the elements $x_1 \otimes \dots \otimes x_k$, for $k \in \mathbb{N}$, where each $x_i$ is equal to $a$ or $b$. We will show that the image of this basis under $(\Omega \mu)_* \circ \widetilde{\eta_*}$ is the basis of a free $\Zps$-submodule of $H_*(\Omega \Sigma X)$, which will imply the result. Firstly, since $(\Omega \mu)_* \circ \widetilde{\eta_*}$ is the unique map of algebras extending $\overline{\mu}_*$, we have

$$p^{s-1} \zeta_j \circ \splitting \circ (\Omega \mu)_* \circ \widetilde{\eta_*} (x_1 \otimes \dots \otimes x_k) = p^{s-1} \zeta_j \circ \splitting (\overline{\mu}_*(x_1) \otimes \dots \otimes \overline{\mu}_*(x_k))$$ $$= \zeta_j \circ \splitting ( p^{s-1} (\overline{\mu}_*(x_1) \otimes \dots \otimes \overline{\mu}_*(x_k))).$$

By Proposition \ref{BSGen}, we may write each $\overline{\mu}_*(x_i)$ as $\widetilde{\eta_*}(t_i)+c_i$, for $t_i = \splitting(\overline{\mu}_*(x_i)) \in T(\widetilde{H}_*(X))$ and some $c_i$ with $p^{s-1} c_i = 0$. The above is therefore equal to $$\zeta_j \circ \splitting ( p^{s-1} ((\widetilde{\eta_*}(t_1)+c_1) \otimes \dots \otimes (\widetilde{\eta_*}(t_k)+c_k)) = \zeta_j \circ \splitting ( p^{s-1} (\widetilde{\eta_*}(t_1) \otimes \dots \otimes \widetilde{\eta_*}(t_k))$$ $$= \zeta_j ( p^{s-1} t_1 \otimes \dots \otimes t_k) = \begin{cases} p^{s-1} \zeta_1(t_1) \otimes \dots \otimes \zeta_1(t_k) & j=k \\ 0 & j<k \end{cases}$$ by Lemma \ref{leadingTrims}. Since $t_i = \splitting(\overline{\mu}_*(x_i))$, we have $$p^{s-1} \zeta_1(t_1) \otimes \dots \otimes \zeta_1(t_k) = p^{s-1}  (\zeta_1 \circ \splitting(\overline{\mu}_*(x_1))) \otimes \dots \otimes (\zeta_1 \circ \splitting(\overline{\mu}_*(x_k))).$$

Now, each $x_i$ is equal to $a$ or $b$, and we have seen that the images of $a$ and $b$ under $\zeta_1 \circ \splitting \circ \overline{\mu}_*$ generate a $(\Zps)^2$-summand inside $\widetilde{H}_*(X)$. It follows that the elements $\zeta_1 \circ \splitting(\overline{\mu}_*(x_1)) \otimes \dots \otimes \zeta_1 \circ \splitting(\overline{\mu}_*(x_k))$ generate a copy of $T((\Zps)^2)$ inside $T(\widetilde{H}_*(X))$.

The above calculation therefore tells us that the map $\zeta_k \circ \splitting \circ (\Omega \mu)_* \circ \widetilde{\eta_*} \circ \iota_k$ carries $p^{s-1}$ times a basis of $\widetilde{H}_*(P^{n}(p^r))^{\otimes k} \subset T(\widetilde{H}_*(P^{n}(p^r)))$ to $p^{s-1}$ times a basis of $((\Zps)^2)^{\otimes k} \subset T((\Zps)^2)$ inside $T(\widetilde{H}_*(X))$. This implies that the restriction of $\zeta_k \circ \splitting \circ (\Omega \mu)_* \circ \widetilde{\eta_*} \circ \iota_k$ to $p^{s-1} \widetilde{H}_*(P^{n}(p^r))^{\otimes k}$ is an injection, so $\zeta_k \circ \splitting \circ (\Omega \mu)_* \circ \widetilde{\eta_*} \circ \iota_k$ must itself be an injection and we have also seen that $p^{s-1}\zeta_j \circ \splitting \circ (\Omega \mu)_* \circ \widetilde{\eta_*} \circ \iota_k = 0$ for $j<k$

Thus, by Lemma \ref{leadingTerms}, $\splitting \circ (\Omega \mu)_* \circ \widetilde{\eta_*}$ is an injection, so $(\Omega \mu)_* \circ \widetilde{\eta_*}$ is an injection, as required. \end{proof}

\section{Proof of Theorems \ref{HPrelim} and \ref{HCriterion}} \label{proof2Section}

In this section we will prove Theorem \ref{HPrelim}, and then from that, together with Proposition \ref{injection2}, deduce Theorem \ref{HCriterion}.

\begin{proof}[Proof of Theorem \ref{HPrelim}] By Lemma \ref{toTheFields} it suffices to prove the theorem when $t=s$. Combining Lemmas \ref{PhiPiDiffs}, \ref{hurewiczTriangle}, and naturality of the maps $\beta^r$, $\rho^s$, and $h$ with respect to the map of spaces $\Omega \mu$, we obtain the following commuting diagram: \begin{center}
\begin{tabular}{c}
\xymatrix{
L'(x,dx) \ar_{d}[dd] \ar^(0.4){\PhiPi^{r,r}}[r] & \pi_*(\Omega P^{n+1}(p^r); \Zpr) \ar^{\beta^r}[d] \ar^(0.57){(\Omega \mu)_*}[r] & \pi_*(\Omega Y; \Zpr) \ar^{\beta^r}[d]  \\
& \pi_*(\Omega P^{n+1}(p^r)) \ar^{\rho^s}[d] \ar^(0.57){(\Omega \mu)_*}[r] & \pi_*(\Omega Y) \ar^{\rho^s}[d]  \\
L'(x,dx) \otimes \Zps \ar^(0.45){\PhiPi^{r,s}}[r] \ar_{\quot}[d] & \pi_*(\Omega P^{n+1}(p^r); \Zps) \ar^{h}[d] \ar^(0.57){(\Omega \mu)_*}[r] & \pi_*(\Omega Y; \Zps) \ar^{h}[d] \\
L(x,dx) \otimes \Zps \ar^(0.45){\PhiH^{r,s}}[r] & H_*(\Omega P^{n+1}(p^r); \Zps) \ar^(0.57){(\Omega \mu)_*}[r] & H_*(\Omega Y; \Zps). \\
}
\end{tabular}
\end{center}

By Corollary \ref{combining}, $\textrm{Im}((\Omega \mu)_* \circ \PhiH^{r,s} \circ \quot \circ  d)$ is $\Zps$-hyperbolic. By commutativity of the diagram, $(\Omega \mu)_* \circ \PhiH^{r,s} \circ \quot \circ  d = h \circ \rho^s \circ (\Omega \mu)_* \circ \beta^r \circ \PhiPi^{r,r}$, so the image of the latter map is also $\Zps$-hyperbolic.

We thus obtain a diagram \begin{center}
\begin{tabular}{c}
\xymatrix{
\pi_*(\Omega Y) \ar^{h \circ \rho^s}[dr] & \\
\textrm{Im}((\Omega \mu)_* \circ \beta^r \circ \PhiPi^{r,r}) \ar^(.4){h \circ \rho^s}[r] \ar@{^{(}->}[u] & \textrm{Im}((h \circ \rho^s) \circ ((\Omega \mu)_* \circ \beta^r \circ \PhiPi^{r,r})).
}
\end{tabular}
\end{center}

The bottom map is a surjection by choice of codomain, and we have shown above that this codomain is $\Zps$-hyperbolic. The domain of $(\Omega \mu)_* \circ \beta^r \circ \PhiPi^{r,r}$ is $L'(x,dx)$, which is a $\Zpr$-module, hence is automatically annihilated by multiplication by $p^r$. Therefore, the group in the bottom left, $\textrm{Im}((\Omega \mu)_* \circ \beta^r \circ \PhiPi)$, is also annihilated by multiplication by $p^r$. The group in the bottom right, $\textrm{Im}((\Omega \mu)_* \circ \beta^r \circ \PhiPi)$, is contained in $H_*(\Omega Y; \Zps)$, hence is annihilated by multiplication by $p^s$. This means that we can apply Corollary \ref{surjectOnHype} (The `Sandwich' Lemma) to see that $\pi_*(\Omega Y) \cong \pi_{*+1}(Y)$ is $p$-hyperbolic concentrated in exponents $s, s+1, \dots, r$, so by definition $Y$ is $p$-hyperbolic concentrated in exponents $s, s+1, \dots, r$, which completes the proof. \end{proof}

Theorem \ref{HCriterion} now follows.

\begin{proof}[Proof of Theorem \ref{HCriterion}] By Proposition \ref{injection2}, $(\Omega \mu)_*$ is an injection, so by Theorem \ref{HPrelim}, $\Sigma X$ is $p$-hyperbolic concentrated in exponents $s, s+1, \dots, r$, as required. \end{proof}

\printbibliography

\end{document}